\numberwithin{equation}{section}
\renewcommand{\epsilon}{\varepsilon}
\newcommand{\one}{\mathbf{1}}
\newtheorem{maintheorem}{Theorem}
\newtheorem{theorem}{Theorem}[section]
\newtheorem*{theorem*}{Theorem}
\newtheorem{lemma}[theorem]{Lemma}
\newtheorem{claim}[theorem]{Claim}
\newtheorem{proposition}[theorem]{Proposition}
\newtheorem{observation}[theorem]{Observation}
\newtheorem*{observation*}{Observation}
\newtheorem{fact}[theorem]{Fact}
\newtheorem{corollary}[theorem]{Corollary}
\newtheorem{remark}[theorem]{Remark}
\theoremstyle{definition}{

\newtheorem{definition}[theorem]{Definition}
\newtheorem*{definition*}{Definition}

\newtheorem{question}[theorem]{Question}
}
\newcommand{\E}{\mathbb E}
\renewcommand{\P}{\mathbb P}
\newcommand{\R}{\mathbb R}
\newcommand{\cD}{\mathcal D}
\newcommand{\cG}{\mathcal G}
\newcommand{\cE}{\mathcal E}
\newcommand{\cA}{\mathcal A}
\newcommand{\cL}{\mathcal L}
\newcommand{\cP}{\mathcal P}
\newcommand{\cY}{\mathcal Y}
\newcommand{\fa}{\mathfrak a}
\newcommand{\fb}{\mathfrak b}
\newcommand{\ff}{\mathfrak f}
\newcommand{\sC}{\mathsf C}
\newcommand{\bo}{\mathbf o}
\newcommand{\bu}{\mathbf u}
\newcommand{\bv}{\mathbf v}
\newcommand{\bw}{\mathbf w}
\newcommand{\sL}{\mathscr L}
\newcommand{\aG}{\fa}
\newcommand{\aGhat}{\widehat\fa}
\newcommand{\bG}{\fb}
\newcommand{\leqP}{\mathbin{\leq_{\textsc p}}}
\newcommand{\nleqP}{\mathbin{\nleq_{\textsc p}}}
\DeclareMathOperator{\var}{Var}
\DeclareMathOperator{\rank}{rank}
\begin{document}

\title{The threshold for stacked triangulations}

\author{Eyal Lubetzky}
\address{E.\ Lubetzky\hfill\break
Courant Institute\\ New York University\\
251 Mercer Street\\ New York, NY 10012,~USA.}
\email{eyal@courant.nyu.edu}

\author{Yuval Peled}
\address{Y.\ Peled\hfill\break
Einstein Institute of Mathematics\\
 Hebrew University\\ Jerusalem~91904\\ Israel.}
\email{yuval.peled@mail.huji.ac.il}

\begin{abstract}
A \emph{stacked triangulation} of a $d$-simplex $\bo=\{1,\ldots,d+1\}$ ($d\geq 2$) is a triangulation obtained by repeatedly subdividing a $d$-simplex into $d+1$ new ones via a new vertex (the case $d=2$ is known as an Appolonian network). We~study the occurrence of  such a triangulation in the Linial--Meshulam model, i.e., for which $p$ does the random simplicial complex $Y\sim \cY_d(n,p)$ contain the faces of a stacked triangulation of the $d$-simplex $\bo$, with its internal vertices labeled in $[n]$.  In the language of bootstrap percolation in hypergraphs, it pertains to the threshold for $K_{d+2}^{d+1}$, the $(d+1)$-uniform clique  on $d+2$ vertices.

Our main result identifies this threshold for every $d\geq 2$, showing it is asymptotically $(\alpha_d n)^{-1/d}$, where $\alpha_d$ is the growth rate of the Fuss--Catalan numbers of order $d$. The proof hinges on a second moment argument in the supercritical regime, and on Kalai's algebraic shifting in the subcritical regime.
\end{abstract}

{\mbox{}
\vspace{-.4cm}
\maketitle
}
\vspace{-0.4cm}

\section{Introduction}

We study the following question, which can be phrased, and different flavors of it were studied, in the context of (i) stacked triangulations of a simplex in a random simplicial complex, or (ii) the homotopy-equivalent closure of a  simplicial complex under elementary expansions, or (iii) bootstrap percolation on random hypergraphs.
For simplicity, we first phrase it for dimension $d=2$, noting  the objects of interest, \emph{stacked triangulations}, are in that case also known as \emph{Appolonian networks}.

\begin{question}
\label{q:stacked}
A \emph{stacked triangulation} of a triangle (a.k.a.\ \emph{Appolonian network}) is obtained by repeatedly subdividing a triangle into three via a new~vertex. For a (random) set of triangles $Y \subseteq \binom{[n]}3$, is there such a triangulation of $\{1,2,3\}$ and a labeling of its vertices in $[n]$ so that all these labeled faces belong to~$Y$? 
\end{question}
If $Y$ contains each triangle in $\binom{[n]}3$ independently with probability $p$, this addresses such  triangulations in the Linial--Meshulam~\cite{LM06} random complex $\cY_2(n,p)$. See~\S\ref{subsection:complexes} for  related work on triangulations and simple connectivity of $\cY_2(n,p)$.

The equivalent formulation obtained from reversing this process is as follows.
{
\setcounter{theorem}{0}
\renewcommand\thetheorem{\arabic{section}.\arabic{theorem}'}
\begin{question}\label{q:bootstrap}
Given a (random) set of triangles $Y\subseteq \binom{[n]}3$, repeatedly add to~$Y$ every triangle $f=\{v_1,v_2,v_3\}$ for which there exists some $u\in[n]$ such that all three triangles $\{v_1,v_2,u\}$, $\{v_1,u,v_3\}$, $\{u,v_2,v_3\}$ belong to $Y$. 
When this process terminates, does the final set $Y_\infty$ contain $\{1,2,3\}$? When does one have $Y_\infty = \binom{[n]}3$?
\end{question}
}

In the language of weak saturation and bootstrap percolation of hypergraphs introduced by Bollob\'as~\cite{Bol68} in the late 1960's, this is $K^3_4$-bootstrap percolation: one starts with a random $3$-uniform hypergraph, and
 repeatedly adds a hyperedge if it completes a new copy of $K_{4}^3$, the complete hypergraph on $4$ vertices (see \S\ref{subsection:bootstrap}).

Another way of viewing Question~\ref{q:bootstrap} is in terms of Whitehead's \emph{elementary moves} between simplicial complexes~\cite{Wh39}.  A \emph{$3$-expansion} in a simplicial complex is the addition of a $3$-simplex and the single $2$-face missing from it---a fundamental operation that preserves its homotopy type.  In this case, one adds the tetrahedron $\{v_1,v_2,v_3,u\}$ along with the missing face $f$. The above question thus asks  what complex we arrive at by after performing all possible $3$-expanions on $\cY_2(n,p)$.

We now state the setting of the problem formally for any fixed dimension $d\geq 2$.
A bistellar $0$-move on a $d$-dimensional simplex is a subdivision of the simplex into $d+1$ simplicies using a new vertex in its interior. Combinatorially, the $d$-simplex\footnote{As a member of a simplicial complex, a set of size $d+1$ is called a $d$-dimensional face or simplex.} $f=\{v_1,\ldots,v_{d+1}\}$ is replaced by $f \setminus \{v_j\} \cup \{u\}$, $j=1,\ldots,d+1$, where $u$ is the new vertex. A {\em stacked triangulation} is a triangulation of the $d$-dimensional simplex that is obtained from the simplex by a sequence of bistellar $0$-moves. Let $Y$ be a simplicial complex containing the boundary of the $d$-simplex $\{v_1,\ldots,v_{d+1}\}$. We say that $Y$ admits a {\em stacked contraction} of $f$ if there is a simplicial map from a stacked triangulation $T$ to $Y$ that maps the boundary to $f$, or in other words, if there is a labelling of the vertices of $T$ in which the boundary vertices are labeled $v_1,\ldots,v_{d+1}$ and the label-set of every face of $T$ belongs to $Y$. 

    In this paper we study a stacked triangulations and contractions in the binomial Linial--Meshulam random simplicial complex. More precisely, we let $Y\sim \cY_d(n,p)$ be an $n$-vertex $d$-dimensinal complex with a full skeleton in which every $d$-simplex appear independently with probability $p=p(n)$, and ask whether there exists a stacked contraction of the $d$-simplex $\{1,\ldots,d+1\}$ in $Y$ with high probability (a probability tending to $1$ as $n\to\infty$). Moreover, how many $d$-simplices in $\binom{[n]}{d+1}$ have a stacked contraction in $Y$?

For the alternate point-of-view on this problem, let $Y\subseteq\binom{[n]}{d+1}$ be an initial set of $d$-dimensional faces. 
As long as there exist $f=\{v_1,...,v_{d+1}\}\notin Y$ and $u$ such~that
\[
 f\setminus\{v_{i}\}\cup \{u\} \in Y\qquad\mbox{for all $i$}\,,
\]
the face $f$ is added to $Y$. This deterministic process terminates if no such $f$ exists.

In the language of Whitehead's elementary moves, this is phrased as follows. 

An elementary $(d+1)$-{\em collapse} is the removal of a $(d+1)$-dimensional face $\hat f$ and one of its $d$-dimensional facets $f$ from a complex in which $f$ is not contained in any other $(d+1)$-dimensional face. The inverse of this move, in which a $(d+1)$-simplex $\hat f$ and its $d$-facet $f$ are added to a complex that contains all the boundary of $\hat f$ except $f$, is called an elementary $(d+1)$-{\em expansion}. Both these moves preserve the homotopy type of the complex, and simple-homotopy theory studies the conditions under which homotopy equivalent complexes can be obtained one from the other by a sequence of elementary collapses and expansions~\cite{cohen12}.

Elementary collapses play a big role in the study of random simplicial complexes (see \S\ref{subsection:complexes}). In our context, we start with the random simplicial complex $\cY_d(n,p)$, and carry out an elementary $(d+1)$-expansion as long as such a move is possible. The above questions address, e.g., whether $\{1,\ldots,d+1\}$ is in the final complex.

The $H$-bootstrap percolation process is a deterministic process which starts with an initially infected set $Y\subseteq\binom{[n]}{d+1}$, and in every step a new hyperedge that is the only non-infected hyperedge in a copy of some fixed hypergraph $H$ gets infected. In our setting, the initially infected hypergraph is a binomial random hypergraph in which every hyperedge appears independently with probability $p$ and the fixed hypergraph $H$ is $K_{d+2}^{d+1}$ --- the $(d+1)$-uniform clique on $d+2$ vertices. We wish to estimate the probability that a given hyperedge is infected, as well as the probability for {\em percolation}, i.e., the infection of all hyperedges in $\binom{[n]}{d+1}$. Similar problems in graph bootstrap percolation were investigated thoroughly (see~\S\ref{subsection:bootstrap} for details). Here we settle these problems for the cliques $K_{d+2}^{d+1}$, perhaps one of the most basic problems in random hypergraph bootstrap percolation.

\begin{figure}
    \centering
    \vspace{-0.1in}
    \includegraphics[width=0.55\textwidth]{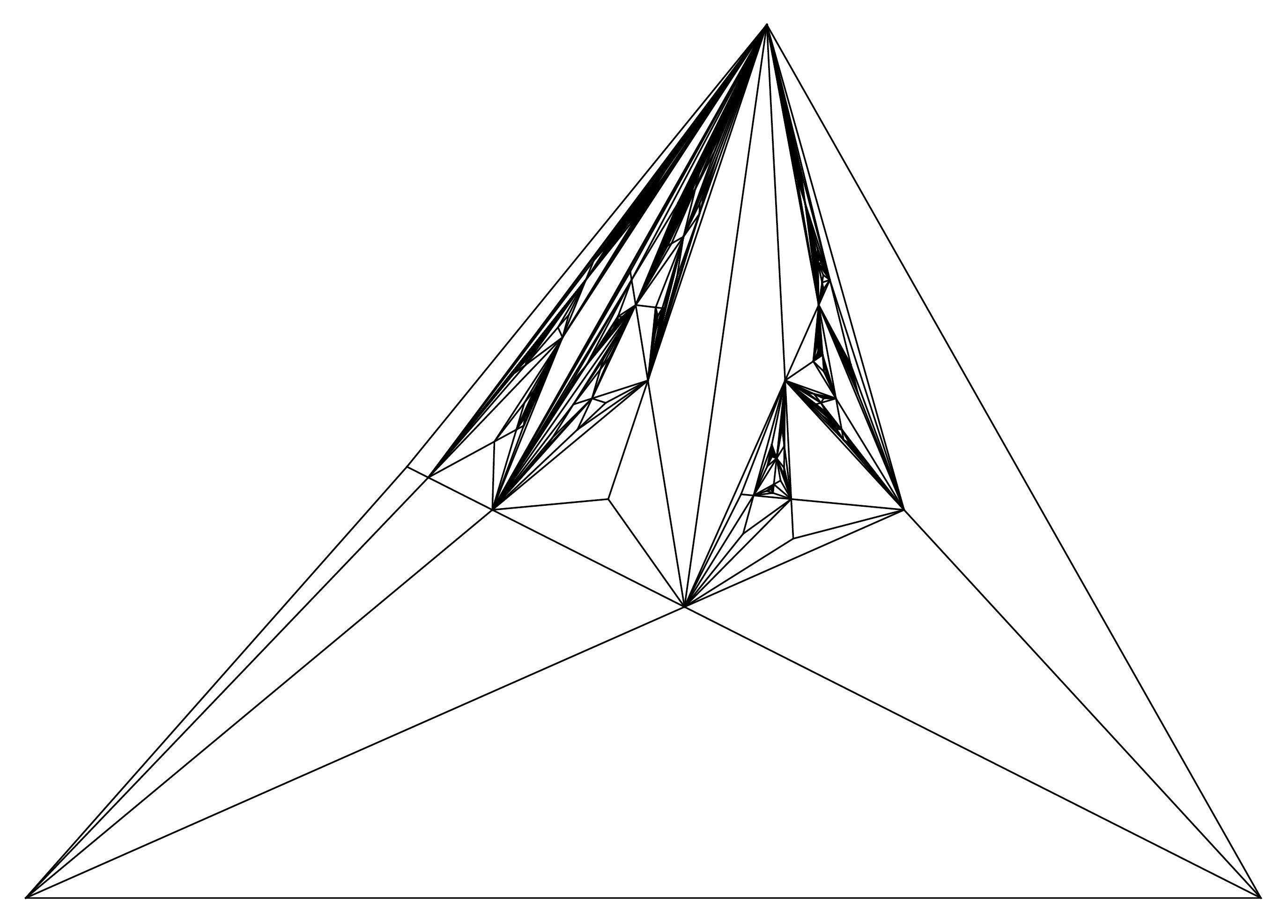}
    \hspace{-0.12\textwidth}\raisebox{0.5in}{\includegraphics[width=0.55\textwidth]{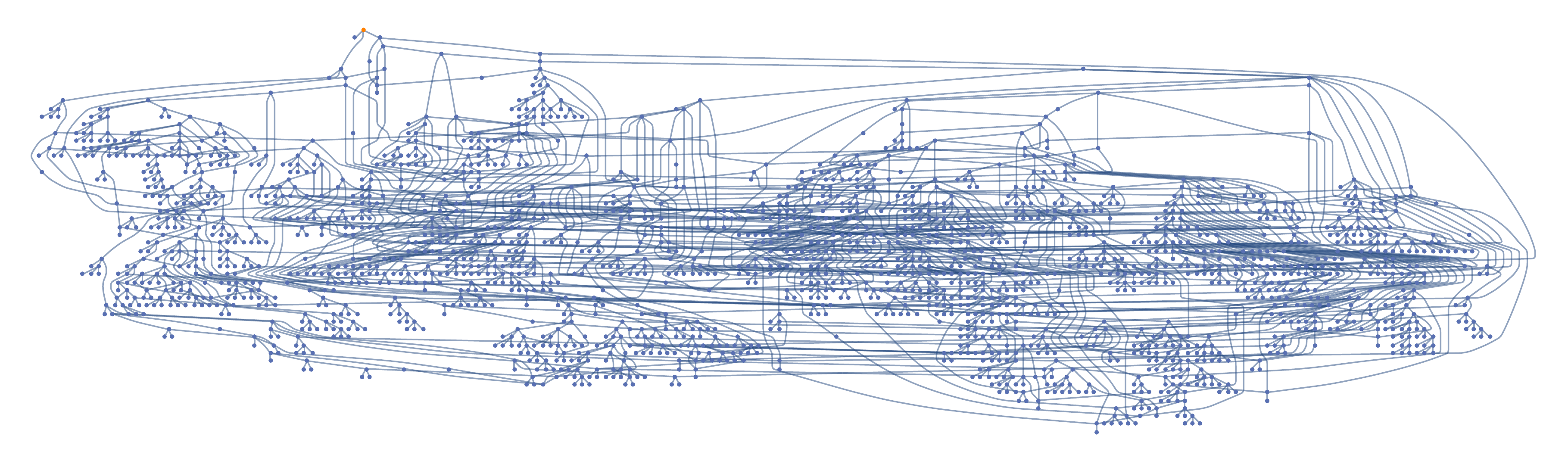}}
    \vspace{-0.1in}
    \caption{A stacked triangulation of $\{1,2,3\}$ in $\cY_2(n,p)$ at criticality for $n=200$, and the corresponding directed acyclic graph~(see Def.~\ref{def:mls}); it has $2097$ vertices, $1275$ of which are leaves, using $184$ labels from $[n]$.}
    \label{fig:tri-n200}
\end{figure}

Our main result determines, for every $d\ge 2$, the asymptotic threshold $p_c$ for the property that $\cY_d(n,p)$ admits stacked contractions of all $f\in\binom{[n]}{d+1}$, or equivalently, the asymptotic threshold for $K_{d+2}^{d+1}$-percolation (see Fig.~\ref{fig:stacked-tri} for a triangulation at~$p_c$).
In what follows, let
\begin{equation}\label{eq:alpha-d}
\alpha_d=(d+1)^{d+1}/d^d
\end{equation}
denote the growth rate of the Fuss--Catalan numbers of order $d$ (see~\S\ref{subsection:stacked}).

\begin{maintheorem}\label{thm:1}
Let $Y\sim\cY_d(n,p)$ for fixed $d\geq 2$, and set $\alpha_d
$ as in~\eqref{eq:alpha-d}.
\begin{enumerate}[(i)]
    \item{} [Subcritical] If $p=\big(1-\frac 1{\log{n}} \big)(\alpha_d n)^{-1/d}$ then the probability that $Y$ admits a stacked contraction of the face $\bo=\{1,\ldots,d+1\}$ is $n^{-1/d+o(1)}$.
    \item{} [Supercritical]
    There exists $\delta_d>0$ such that,
    if $p=\left(1+n^{-\delta_d} \right)(\alpha_d n)^{-1/d}$ then, with high probability, $Y$ admits a stacked contraction of every face $f\in\binom{[n]}{d+1}$.
\end{enumerate}
\end{maintheorem}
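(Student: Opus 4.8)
The plan is to reduce both parts to a first-/second-moment analysis of a single combinatorial object. A stacked triangulation of the $d$-simplex built from $k$ bistellar $0$-moves is encoded by a rooted $(d+1)$-ary tree with $k$ internal nodes---each internal node being a subdivided simplex and its $d+1$ children the simplices replacing it---so such a tree carries $k$ interior vertices and $dk+1$ top ($d$-)faces, and the number of tree shapes with $k$ internal nodes is the order-$d$ Fuss--Catalan number $C_k^{(d)}=\tfrac{1}{dk+1}\binom{(d+1)k}{k}\sim c_d\,\alpha_d^k\,k^{-3/2}$. Consequently $Y$ admits a stacked contraction of $\bo$ exactly when some such tree, equipped with a labelling of its interior vertices in $[n]$, has all of its $dk+1$ top faces in $Y$; the governing first moment (over injective labellings) is $M(p)=\sum_{k\ge1}C_k^{(d)}(n)_k p^{dk+1}=p\sum_{k\ge1}C_k^{(d)}(n)_k p^{dk}$. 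Since $\alpha_d np^d=1$ at $p=(\alpha_d n)^{-1/d}$, the Fuss--Catalan growth $\alpha_d^k$ is exactly offset by $(n)_k p^{dk}\le(\alpha_d np^d)^k$, so $M(p)\asymp p\sum_k k^{-3/2}\asymp n^{-1/d}$ at, and just below, criticality; this both locates the threshold at $(\alpha_d n)^{-1/d}$ and predicts the order $n^{-1/d}$ in part (i).

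For the \emph{subcritical lower bound} in (i) I would use only the crudest certificates, $k=1$: an interior vertex $u\in[n]\setminus\bo$ for which all $d+1$ faces $(\bo\setminus\{i\})\cup\{u\}$, $i\in[d+1]$, lie in $Y$. These events (for distinct $u$) involve pairwise disjoint sets of faces, hence are independent, so the number of good $u$ is $\bin(n-d-1,p^{d+1})$ with mean $\asymp np^{d+1}\asymp n^{-1/d}$; it is positive with probability $\asymp n^{-1/d}$, which already gives $n^{-1/d+o(1)}$. The \emph{subcritical upper bound} is the heart of (i). Markov's inequality applied to $M(p)$ would suffice were $M(p)$ a true first moment---but a certificate need not be injective: it can be a highly recursive directed acyclic structure revisiting a small label-set (cf.\ Figure~\ref{fig:tri-n200}, a critical certificate with $822$ subdivisions on $184$ labels), and the naive sum over such recursive certificates diverges. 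Here the plan is to bring in Kalai's algebraic shifting, through the hyperconnectivity matroid witnessing $\operatorname{wsat}(n,K_{d+2}^{d+1})=\binom{n-1}{d}$: infecting $\bo$ forces $\bo$ into the span of $Y$ in this matroid, and---after localising to the vertex set actually used by a certificate---a shifting/rigidity argument should show that the recursive portion of any certificate is redundant, so that up to negligible probability an infection of $\bo$ is witnessed by a \emph{tree-like} certificate on the same vertices. One is then reduced to $M(p)$, yielding $\P(\bo\text{ contracted})\le n^{-1/d+o(1)}$.

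In the \emph{supercritical regime} $p=(1+n^{-\delta_d})(\alpha_d n)^{-1/d}$ we have $\alpha_d np^d=1+\Theta(n^{-\delta_d})$, so the truncated first moment $\sum_{k\le K}C_k^{(d)}(n)_k p^{dk+1}$ is now governed by large $k$ and, for $K$ a suitably small power of $n$, is as large as $\exp(n^{\Omega(\delta_d)})$. I would let $X$ count tree-like certificates of $\bo$ with at most $K$ subdivisions and run a second moment: $\E[X^2]$ splits over pairs of certificates by which top faces they share; disjoint pairs are independent and contribute $\E[X]^2$, while a careful estimate of the overlapping pairs (tuning $K$ and restricting to sufficiently ``spread'' certificates so that overlaps are rare) should give $\var(X)/\E[X]^2\le\exp(-n^{\Omega(1)})$. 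Then $\bo$ is uncontracted with super-polynomially small probability, and a union bound over all $\binom n{d+1}$ faces finishes (ii). (A fallback for that last step is two-round exposure: the first round leaves at most $o(n^{d+1})$ faces uncontracted, but the uncontracted set is closed under the bootstrap rule, which forces each of its faces to have $\Omega(n)$ uncontracted neighbours; with its sparsity this forces it to be empty.)

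I expect the subcritical upper bound to be the main obstacle: converting ``recursive certificates are wasteful'' into a rigorous, quantitative statement needs the algebraic input together with a localisation controlling which and how many vertex sets can host a certificate. In the supercritical case the difficulty is instead quantitative---pushing the variance ratio down to a super-polynomial tail---which is what constrains the admissible exponent $\delta_d$.
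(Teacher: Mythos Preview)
Your plan matches the paper's architecture---first moment over pedigrees for (i), a second-moment-type bound plus union bound for (ii), with algebraic shifting as the subcritical tool---but at the two technical cruxes you are underspecified in ways that hide the real difficulties.

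For the subcritical upper bound, shifting does \emph{not} let you ``reduce to a tree-like certificate on the same vertices'': genuinely recursive certificates exist (the paper exhibits, for every $k$, an $(m,l,s)$-pedigree with tree excess $\mathfrak b:=dm-l+1$ of order $\mathfrak a^{(d+1)/d}$, where $\mathfrak a:=l-(ds+1)$ is the leaf excess), so there is no reduction of the kind you hope for. What shifting actually yields is the quantitative inequality $\mathfrak b\le d\,\mathfrak a^{(d+1)/d}+(d^2-1)\mathfrak a$, proved by combining $\beta_d\ge m$ with an upper bound $\beta_d\le s+d\widehat{\mathfrak a}+\widehat{\mathfrak a}^{(d+1)/d}$ coming from Nevo's formula for the shift of a union along a simplex together with Kruskal--Katona. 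This inequality is precisely what controls the enumeration of unlabeled DAGs with parameters $(m,l,s)$ in the first-moment sum; your hyperconnectivity/\textit{wsat} heuristic recovers only $\mathfrak a\ge0$, which by itself is not enough to make that sum converge. For the supercritical part, ``sufficiently spread'' is the right instinct but the specific restriction that works is to \emph{balanced} proper $s$-pedigrees (the $d+1$ subtrees below the root have equal size $s'$, with $s=(d+1)s'+1$). The correlation analysis rests on a combinatorial lemma: any leaf-subset $P$ using $r$ internal labels embeds in some proper $r$-pedigree, so $|P|\le dr$, with equality iff $P$ is the complement of a single subtree. Balancedness is then used exactly once but decisively: in the equality case it forces the omitted subtree to have size at most $ds'+1$, hence $|P|\ge d(ds'+1)$, which is the one place where an unrestricted second moment would blow up. Finally, Chebyshev only gives a polynomial tail; the paper uses Janson's inequality to obtain $\exp(-n^{\Omega(1)})$, which is what survives the union bound over $\binom{n}{d+1}$ faces.
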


For a heuristic argument suggesting that $(\alpha_d n)^{-1/d}$ is the correct threshold, let us completely ignore the delicate dependencies between the stacked contractions of various faces; suppose instead that each face has such a contraction independently with probability $\theta=\theta(n)$, setting aside the trivial equilibrium $\theta=1$. Then a given face $\bo$ has such a contraction either by being in $Y$, which has probability $p$, or via a new vertex, which has probability $1-(1-\theta^{d+1})^{n-(d+1)} \approx n \theta^{d+1}$ (we should have $\theta \ll n^{-1/(d+1)}$ so the final expression could give back $\theta$). This yields the equation
\[ \theta = p + n \theta^{d+1}\,,\]
and we see that if $p = \gamma n^{-1/d}$ and $\theta = \hat\gamma n^{-1/d}$ then this becomes $\hat\gamma^{d+1} - \hat\gamma + \gamma = 0$. The polynomial $Q_\gamma(x)=x^{d+1}-x+\gamma$ is convex in $\R_+$ with $Q_\gamma(x_0) = -\alpha_d^{1/d}$~at its local minimum $x_0=(d+1)^{-1/d}$.
If $\gamma > \alpha_d^{1/d}$ then $Q_\gamma(x) $ has no roots in $\R_+$, leaving us only with the trivial solution $\theta=1$. If $\gamma<\alpha_d^{1/d}$ then $Q_\gamma(x)$ has two roots in $\R_+$, and the solution for $\theta$ should correspond to the smaller root $0<\hat\gamma<x_0$.

The next more detailed result on the subcritical regime confirms this heuristic.
\begin{maintheorem}\label{thm:density}
Consider
$Y\sim\cY_d(n,\gamma n^{-1/d})$ for fixed  $d\geq 2$ and $0\le\gamma<\alpha_d^{-1/d}$.
Let~$\hat X$ be the fraction of sets $\bo\in\binom{[n]}{d+1}$ so that $Y$ admits a stacked contraction of~$\bo$. Then $n^{1/d} \hat X \to \hat\gamma$ in probability as $n\to\infty$, where $\hat\gamma$ is the smallest positive root of the polynomial $Q_\gamma(x) = x^{d+1}-x+\gamma$.
\end{maintheorem}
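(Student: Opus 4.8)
The plan is to get convergence in probability from $L^2$-convergence: I will show that $n^{1/d}\E[\hat X]\to\hat\gamma$ and $\var\!\big(n^{1/d}\hat X\big)\to 0$, and then invoke Chebyshev. Since $\cY_d(n,p)$ is invariant under relabelling $[n]$, linearity of expectation gives $\E[\hat X]=\theta_n$, where $\theta_n:=\P(Y\text{ admits a stacked contraction of }\bo)$; so the first task is to pin down $n^{1/d}\theta_n\to\hat\gamma$, and the second is to show that the indicators $\one_{\bo'}$ that $Y$ admits a stacked contraction of $\bo'$, over $\bo'\in\binom{[n]}{d+1}$, are, in aggregate, almost uncorrelated.

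\emph{The mean: upper bound.} Let $N_\bo$ count the stacked contractions of $\bo$ in $Y$ whose internal vertices receive \emph{distinct} labels from $[n]\setminus\bo$. Grouping by the number $m$ of bistellar $0$-moves, and using (\S\ref{subsection:stacked}) that the combinatorial stacked triangulations with $m$ moves are enumerated by the coefficients $C_{d,m}$ of the Fuss--Catalan series $B(x)=\sum_{m\ge 0}C_{d,m}x^m$, which satisfies $B=1+xB^{\,d+1}$ and is therefore finite precisely for $x<\alpha_d^{-1}$, and that such a triangulation has $1+md$ faces, one obtains
\[
\E[N_\bo]=\sum_{m\ge 0}C_{d,m}\,(n-d-1)_m\,p^{\,1+md}.
\]
With $p=\gamma n^{-1/d}$ the $m$-th term equals $n^{-1/d}C_{d,m}\gamma^{\,1+md}(n-d-1)_m/n^m$; since $\gamma<\alpha_d^{-1/d}$ the series $\sum_m C_{d,m}\gamma^{\,md}$ converges, so dominated convergence yields $n^{1/d}\E[N_\bo]\to\gamma B(\gamma^d)$. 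Substituting into $B=1+xB^{\,d+1}$ shows that $y:=\gamma B(\gamma^d)$ solves $y^{d+1}-y+\gamma=0$, and being a continuous function of $\gamma$ equal to $\gamma$ as $\gamma\to0$ it is the smaller root $\hat\gamma$. Contractions using a repeated internal label contribute only $O(n^{-1-1/d})$, so Markov's inequality gives $\limsup_n n^{1/d}\theta_n\le\limsup_n n^{1/d}\E[N_\bo]=\hat\gamma$.

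\emph{The mean: lower bound.} Here I would make the heuristic recursion $\theta=p+n\theta^{d+1}$ rigorous up to any fixed depth. Let $\theta_n^{(k)}$ be the probability that $\bo$ has a stacked contraction with at most $k$ moves along every root-to-leaf chain, so $\theta_n^{(0)}=p$. Fixing a first-move vertex $u\notin\bo$ and letting $E_u$ be the (increasing) event that each of the $d+1$ simplices $\bo\setminus\{v_i\}\cup\{u\}$ has a contraction with at most $k$ moves per chain — of probability $\theta_n^{(k)}$ each, by relabelling invariance — FKG gives $\P(E_u)\ge(\theta_n^{(k)})^{d+1}$. A Bonferroni inequality over $\{\bo\in Y\}$ and $\{E_u\}_{u\notin\bo}$, together with the estimates $\P(E_u\cap E_{u'})=O\big((\theta_n^{(k)})^{2(d+1)}\big)$ for $u\ne u'$ and $\P(\{\bo\in Y\}\cap E_u)=O\big(p(\theta_n^{(k)})^{d+1}\big)$ for the pairwise terms (these hold because subcriticality forces the relevant branches to be vertex-disjoint with high probability, so the corresponding events depend on essentially disjoint sets of $d$-faces), gives $\theta_n^{(k+1)}\ge p+n(\theta_n^{(k)})^{d+1}-O_k(n^{-2/d})$. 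Since $\gamma<\hat\gamma<(d+1)^{-1/d}$, the scalar map $\beta\mapsto\gamma+\beta^{\,d+1}$ iterated from $\gamma$ increases to its attracting fixed point $\hat\gamma$; comparing $n^{1/d}\theta_n^{(k)}$ inductively to this iteration (the errors staying $O_k(n^{-1/d})$ because $\theta_n^{(k)}\le\theta_n\le(\hat\gamma+o(1))n^{-1/d}$) yields, for each fixed $k$, $\liminf_n n^{1/d}\theta_n\ge \liminf_n n^{1/d}\theta_n^{(k)}=\hat\gamma-o_k(1)$. Letting $k\to\infty$ and combining with the upper bound gives $n^{1/d}\theta_n\to\hat\gamma=n^{1/d}\E[\hat X]$.

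\emph{The variance, and the main obstacle.} Write $\binom{n}{d+1}^2\var(\hat X)=\sum_{\bo,\bo'}\cov(\one_\bo,\one_{\bo'})$, all terms nonnegative by FKG; it suffices that this sum is $o(n^{2d+2-2/d})$. The $\binom{n}{d+1}$ diagonal pairs and the $O(n^{2d+1})$ pairs with $\bo\cap\bo'\ne\emptyset$ contribute at most $O(n^{2d+1})\cdot\theta_n=O(n^{2d+1-1/d})=o(n^{2d+2-2/d})$ (using $d\ge2$ and the crude bound $\cov\le\theta_n$). For vertex-disjoint $\bo,\bo'$ one must establish near-independence, $\cov(\one_\bo,\one_{\bo'})=o(n^{-2/d})$. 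I would do this by exposing a minimal certificate $W$ for $\one_\bo=1$ — say the first minimal stacked contraction of $\bo$ in a fixed order — noting that the faces not examined in certifying $\one_\bo$ remain fresh, and bounding $\P(\one_{\bo'}=1\mid W)\le\theta_n+|W|\cdot\max_f\P(\bo'\text{ has a stacked contraction using the fixed face }f)$. Subcriticality keeps the certificate small (the natural exploration is dominated by a subcritical branching process), and the last probability is $o(n^{-1/d})$ for a face $f$ disjoint from $\bo'$, which gives $\cov=o(n^{-2/d})$ and hence $\var(n^{1/d}\hat X)\to0$. The hard part is precisely this last point — quantifying that a stacked contraction of a generic face is "local" enough that it rarely interferes with one of a disjoint face, equivalently that a contraction of $\bo'$ rarely routes through a prescribed $O(1)$-size family of faces. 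This, like the $O_k(n^{-2/d})$ error terms above, rests on the combinatorial overlap estimates — how many faces two stacked triangulations can share given how many vertices they share, controlled via the extremal (thin) $f$-vectors of stacked complexes — that also drive the second-moment argument in the supercritical regime.
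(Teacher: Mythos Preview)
Your upper bound on the mean contains the main gap, and it is exactly the difficulty the paper is built around. You write ``Contractions using a repeated internal label contribute only $O(n^{-1-1/d})$'' and move on; but controlling stacked contractions of $\bo$ whose internal labels repeat is the core of the subcritical analysis. In the paper's language these are the \emph{improper} $(m,l,s)$-pedigrees (those with $\aG=l-(ds+1)\ge 1$), and bounding their contribution by $o(n^{-1/d})$ is precisely what requires Theorem~\ref{thm:a-b} (the algebraic-shifting inequality $\bG\le d\,\aG^{(d+1)/d}+(d^2-1)\aG$) and the first-moment computation of Theorem~\ref{thm:subcrit-main}. As the paper states explicitly, naive first-moment arguments and the Balogh--Bollob\'as--Morris machinery only give $p_c>n^{-1/d}(\log n)^{-c_d}$; without a bound on the tree excess $\bG$ in terms of $\aG$, the enumeration of improper pedigrees explodes (cf.\ Remark~\ref{fig:the-frustrating-construction}, where $\bG\asymp\aG^{(d+1)/d}$). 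The paper's proof of Theorem~\ref{thm:density} invokes this machinery directly when bounding the term $W$ in~\eqref{eq:hatX_upper}, obtaining $\E[W\one_{\mathcal B}]\le\binom{n}{d+1}n^{-2/d+o(1)}$. Your claimed $O(n^{-1-1/d})$ is both unjustified and stronger than what the paper proves; there is no shortcut here even for fixed $\gamma<\alpha_d^{-1/d}$.

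On structure, the paper does not compute $\var(\hat X)$ at all. It sandwiches $\binom{n}{d+1}\hat X$ between $\sum_{T\in\mathfrak T_{\le s_0}}X_T-Z_{s_0}$ (below) and $\sum_T X_T+W$ (above), and shows concentration of each piece separately: a second-moment bound on $X_T$ for fixed tree shapes (Lemma~\ref{lem:X_T}), a first-moment bound on the overcount $Z_{s_0}$ of pairs of proper pedigrees with a common root (Lemma~\ref{lem:pairs}), and the improper-pedigree bound on $W$ via Section~\ref{sec:subcritical}. Your covariance/revealment scheme for $\var(\hat X)$ ultimately needs the same input---to argue that a contraction of $\bo'$ rarely routes through a prescribed face you again have to rule out improper pedigrees---so you are not avoiding the hard step, just relocating it. Your recursive FKG/Bonferroni lower bound is a genuinely different route from the paper's second moment on $X_T$; since depth-$k$ contractions involve only $O_k(1)$ faces, the pair-term estimates $\P(E_u\cap E_{u'})=O((\theta_n^{(k)})^{2(d+1)})$ are in principle finite computations and this direction could be made to work, but the details you give are still sketches rather than proofs.
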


Theorems~\ref{thm:1} and~\ref{thm:density} show that if $p = \gamma n^{-1/d}$ for $\gamma<\alpha_d^{1/d}$ then the $K_{d+2}^{d+1}$-bootstrap percolation process starting from a binomial random hypergraph with edge density~$p$ does not have a real impact --- the density of the infected hyperedges remains of order $n^{-1/d}$ as it was initially. On the other hand, if $p= \gamma n^{-1/d}$ for $\gamma>\alpha_d^{1/d}$ then the process percolates and with high probability all the hyperedges get infected. In fact, numerical simulations suggest that if we increase the initially infected set one random hyperedge at a time, this cascade occurs at one critical step.

\begin{figure}
    \centering
    \includegraphics[width=0.6\textwidth]{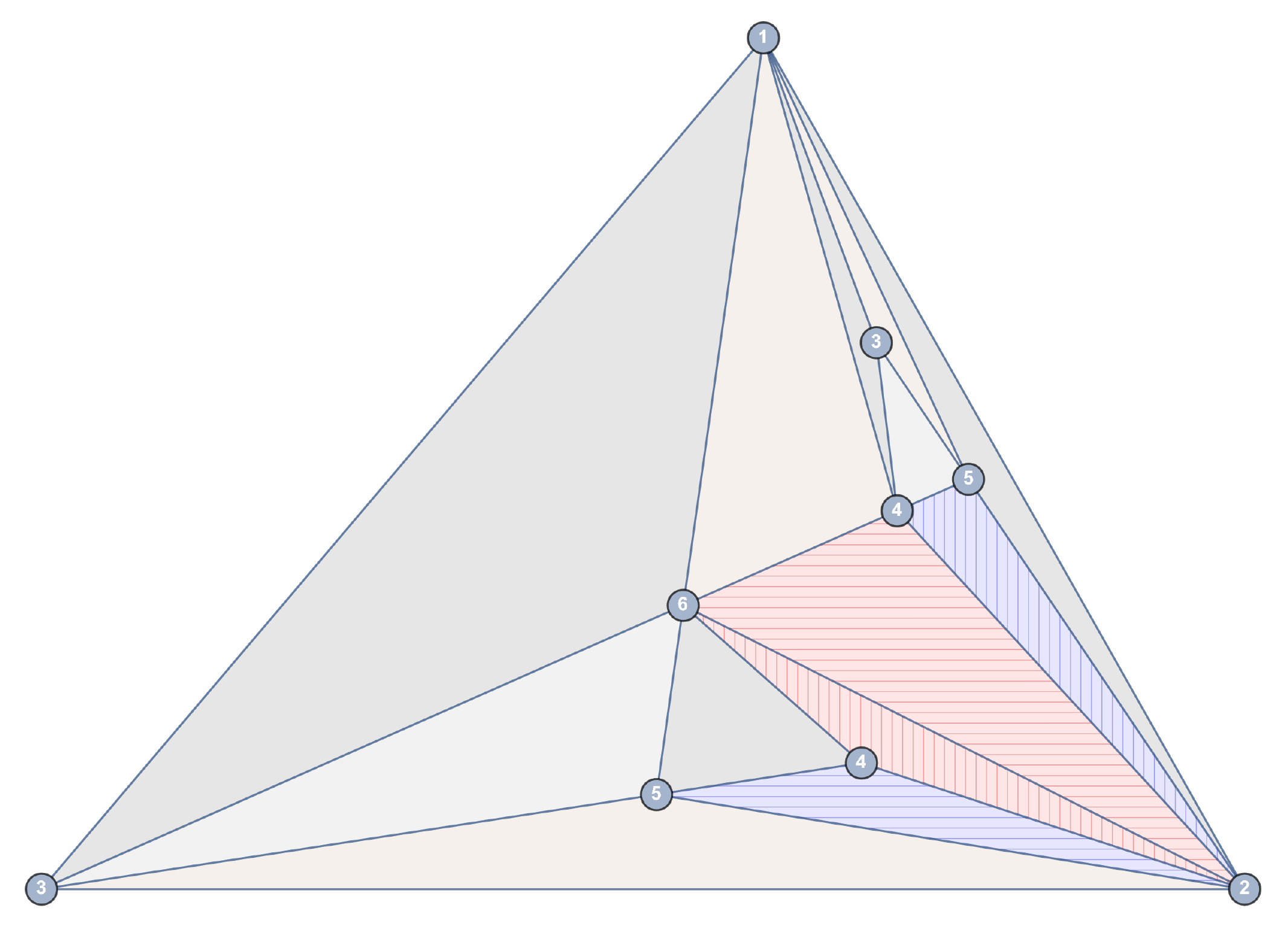}
    \hspace{-0.02\textwidth}
    \raisebox{-7pt}{\includegraphics[width=0.4\textwidth]{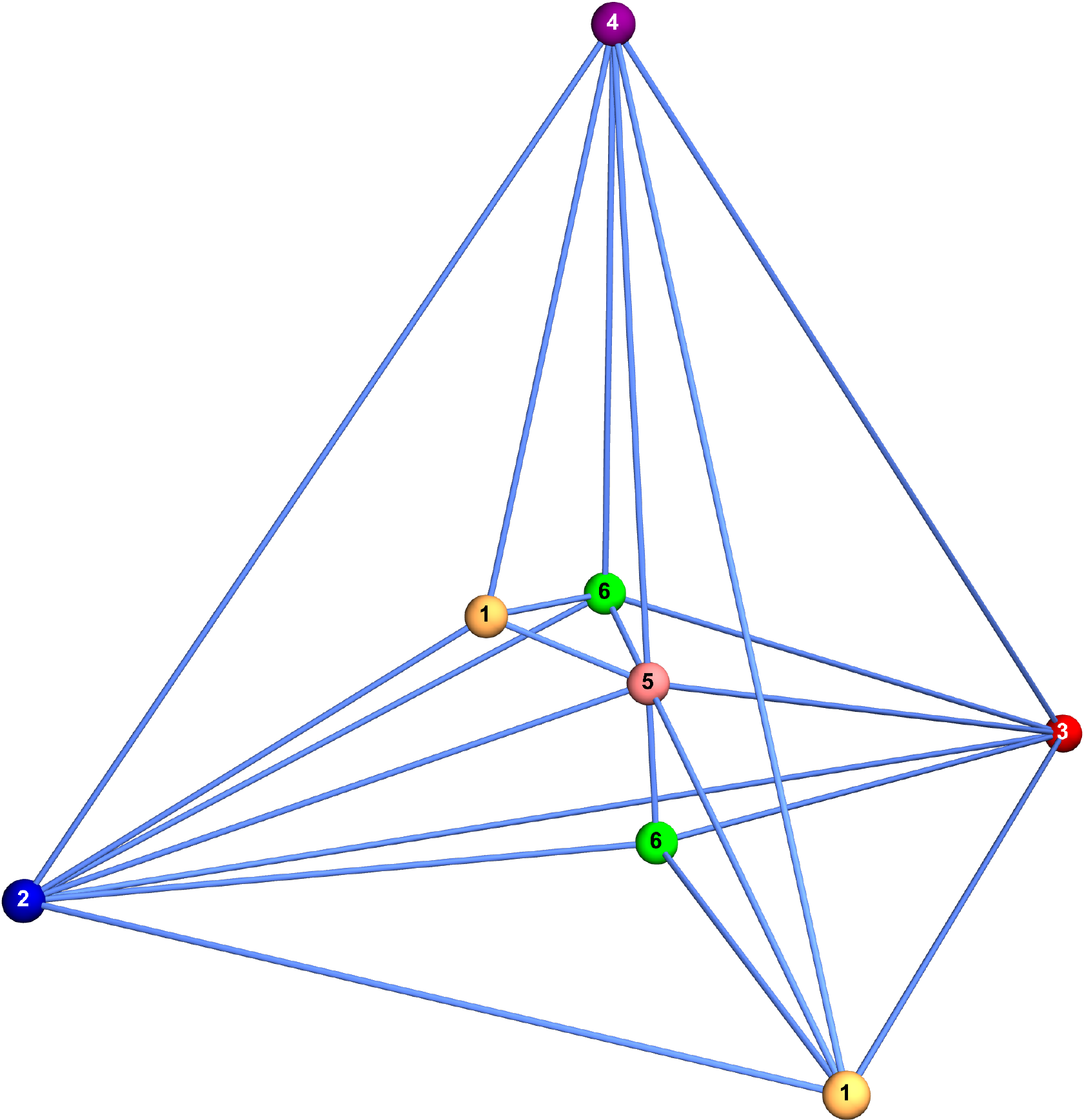}}
    \caption{Stacked contraction in dimension $d=2$ (left; the labels $3,4,5$ and the shaded faces $\{2,4,6\},\{2,4,5\}$ are repeated) and $d=3$ (right; the labels $1,6$ and faces $\{1,2,4,5\},\{1,2,5,6\},\{2,3,5,6\}$ are repeated).}
    \vspace{-0.05in}
    \label{fig:stacked-tri}
\end{figure}

The supercritical regime in Theorem~\ref{thm:1}, where $\cY_d(n,p)$ admits a stacked contraction of every $\bo\in\binom{[n]}{d+1}$, is handled by establishing the stronger property that $\bo$ has a stacked triangulation in which all \emph{labels are distinct}.
The standard way of approaching this would be via the second moment method; however, already for a single face $\bo$, the vanilla method fails: the number of stacked triangulations of~$\bo$ is not concentrated at $p=(1+o(1))p_c$, nor in fact at $p=C p_c$ for any fixed $C>1$, due to  correlations between triangulations. To bypass this, and further show that with high probability there is a stacked contraction of \emph{every} face, we use a modified second moment, restricted
to a set of \emph{balanced} stacked triangulations where each of the $d$-simplices subdividing $\bo$ have the same number of internal vertices.

Conversely, in the subcritical regime we are required to exclude the presence of stacked contractions with {\em repeated labels}, whose combinatorial complexity presents a substantial challenge (see Fig.~\ref{fig:stacked-tri}). This is an inherent difficulty in bootstrap percolation and related problems; e.g., the $2$-dimensional non-stacked variant of this problem --- determining whether a given simplicial complex is simply-connected --- is undecidable. In a random setting, determining the sharp threshold for simple connectivity of $\cY_2(n,p)$, is an important open problem in combinatorial topology. 

In the stacked case, an adaptation of the existing graph bootstrap percolation machinery due to Balogh, Bollob\'as and Morris~\cite{BBM} shows $p_c > n^{-1/d} (\log n)^{-c_d}$, where $c_d>0$ increases with the dimension~$d$. 
While we could decrease $c_d$ in the superfluous poly-log factor by a careful analysis of triangulations with repeated labels, neither this refinement, nor any of the other combinatorial first moment arguments that we attempted, could recover the correct order of the critical probability.

Miraculously, Kalai's algebraic shifting procedure~\cite{kalai2002} enabled us to establish new non-trivial bounds on the number and complexity of stacked contractions (roughly put, one aims to bound the tree excess in the directed acylic graph corresponding to a stacked contraction via the number of labels it features --- e.g., in Fig.~\ref{fig:tri-n200} the former is $370$ and the latter is $184$; see Theorem~\ref{thm:a-b}), leading to the sharp threshold.

\subsection{Related works}\label{sec:related-work}
Next we discuss a number of lines of research that pertain to the problem studied here, and mention related literature on these.

\subsubsection{Random simplicial complexes}
The systematic study of the topology of random complexes was initiated by Linial and Meshulam~\cite{LM06} who introduced $\cY_2(n,p)$ and studied its homological connectivity. The $d$-dimensional generalization of this model $\cY_d(n,p)$ was introduced in~\cite{MW}.

Babson, Hoffman and Kahle~\cite{BHK11} 
studied the fundamental group of $\cY_2(n,p)$ and showed it is non-trivial and hyperbolic if $p=n^{-1/2+o(1)}$. In our terminology, they proved that with high probability $\cY_2(n,p)$ does not admit a (not necessarily stacked) contraction of a fixed $\{v_1,v_2,v_3\}$. 
In addition, 
the property of $\cY_2(n,p)$ having a triangulation with distinct labels of every $\{v_1,v_2,v_3\}$
was shown in~\cite{LuriaP18} to have the asymptotic threshold $p=(c_0 n)^{-1/2}$ for an explicit constant $c_0$.
However, it remains a challenging open problem to exclude the presence of a contraction with repeated labels of $\{v_1,v_2,v_3\}$ in $\cY_2(n,p)$ where $p=\Theta(n^{-1/2})$, thereby improving the lower bound for simple-connectivity of $\cY_2(n,p)$. In our analysis of stacked triangulation, the main challenge is again controlling the complicated effect of repeated labels.

It is worthwhile mentioning that in higher dimensions $d\ge 3$, determining whether all of the $d$-simplices admit a (not necessarily stacked) contraction in $Y\sim \cY_d(n,p)$ is purely algebraic: the homotopy groups $\pi_i(Y)$ are trivial for all $0\le i <d-1$, and therefore, by the Hurewicz Theorem, $\pi_{d-1}(Y)$ coincides with the well-studied homology group of $Y$ ~\cite{NP18}. In particular, such contractions do exist with high probability if $p\gg d\frac{\log n}n$, starkly below the critical threshold found in Theorem \ref{thm:1}.

One of the most useful techniques in studying topological invariants of $\cY_d(n,p)$ is to simplify it via elementary collapses. The threshold probability for $d$-collapsibility  of $\cY_d(n,p)$ --- the property that all $d$-faces of the complex can be removed by a sequence of elementary collapses -- was studied in~\cite{ALLM13}. Subsequently, it was discovered in~\cite{AL15,LP16} that, throughout the entire regime $p=\Theta(1/n)$, $d$-collapses play a decisive role in computing the Betti numbers of $\cY_d(n,p)$ (see the survey \cite{LP_around}). Elementary collapses were also studied on other models of random complexes such as the multi-parameter model~\cite{CF15} and the random clique complex~\cite{MA19}. The study of elementary expansions in random complexes is not as developed, but Kahle, Paquette and Rold\'an ~\cite{KPR21} used a cubical analog of elementary expansions to determine the sharp threshold probability for homological and simple connectivity of $\mathcal Q_2(n,p)$ --- a random $2$-dimensional cubical subcomplex of the $n$-dimensional cube.
\label{subsection:complexes}

\subsubsection{Hypergraph bootstrap percolation}
\label{subsection:bootstrap}
Bollob\'as introduced hypergraph bootstrap percolation in~\cite{Bol68} under the notion of weakly saturated graphs; for some of the many related notable results in extremal combinatorics, see, e.g.,~\cite{Alon85,Frankl82,Kalai84weakly}. The name $H$-bootstrap percolation was given in~\cite{BBMR12}, taking after a related model known as $r$-neighbor bootstrap percolation, introduced by the physicists Chapula, Leath and Reich~\cite{CLR79}. The latter, in which a vertex of a graph becomes infected if it has $r$ or more infected neighbors, was studied extensively (see~\cite{AL88,BBDM12} for example). 
Balogh, Bollob\'as and Morris~\cite{BBM} were the first to study $H$-bootstrap percolation started at the Erd\H{o}s--R\'enyi random graph $\cG(n,p)$; they focused on $H=K_r$, a clique on $r$ vertices, for $r\ge 4$, and found the corresponding critical probability $p_c(K_r)$  up to a poly-logarithmic factor. Thereafter, Bartha and Kolesnik~\cite{BK20} determined $p_c(K_r)$ up to a constant factor for every $r\ge 5$. Angel and Kolesnik~\cite{AK18,Kol17} investigated the special case $r=4$, and obtained the sharp asymptotics $p_c(K_4)\sim(3n\log n)^{-1/2}$. Other questions regarding graph bootstrap percolation and weak saturation were studied, e.g., in~\cite{GKP17,KS17,BC19}. A random $H$-bootstrap percolation process in which the infection of a hyperedge missing from a copy of $H$ occurs randomly, rather than deterministically, was studied in~\cite{MN21}. From this result, one can deduce an upper bound of the form $p_c(K_{d+2}^{d+1}) < C_d n^{-1/d}$, off by a constant factor; no lower bound of the correct order was previously known.

\subsection{Open problems}
Theorems~\ref{thm:1} and~\ref{thm:density} establish the phase transition for the event that $Y\sim\cY_d(n,p)$ admits a stacked contraction of $\bo=\{1,...,d+1\}$---and more generally for the behavior of the density $\bar X$ of simplices having a stacked contraction---identifying its asymptotic location to be $p\sim (\alpha_dn)^{-1/d}$. Here are some concrete problems aiming to investigate this phase transition more closely.

\begin{question}
Find $p(n)$ such that, with high probability, $Y\sim\cY_d(n,p)$ admits a stacked contraction but not a stacked triangulation (in which the labels are distinct) of $\bo$. Numerical experiments (cf.\ Figure~\ref{fig:tri-n200}) suggest such a probability $p$ exists.
\end{question}

\begin{question}
Find $w(n)$ such that at $p = (\alpha_d n)^{-1/d}+ w(n)$ the probability 
that $Y\sim\cY_d(n,p)$ admits a stacked contraction of $\bo$ converges to $1/2$. 
If the stacked contraction does exist at this $p$, how many labels does it use?
\end{question}
\begin{question}
Let $\pi$ be a uniform random ordering of the elements of $\binom{[n]}{d+1}$. Denote by $\hat X(t)$ the fraction of simplices in $\binom{[n]}{d+1}$ that have a stacked contraction whose faces are among the first $t$ elements of $\pi$. Is there, with high probability, a critical step $\tau\approx (\alpha_d n)^{-1/d}\binom{n}{d+1}$ such that $\hat X(\tau)=O(n^{-1/d})$ and $\hat X(\tau +1)=1$~?
\end{question}


The rest of the paper is organized as follows. Section~\ref{sec:prel} gives some necessary background material about simplicial complexes and algebraic shifting. In Sections~\ref{sec:subcritical} and~\ref{sec:supercritical} we prove the subcritical and supercritical items of Theorem~\ref{thm:1} respectively. Finally, in Section~\ref{sec:subcritical_zoom} we reexamine the subcritical regime and prove Theorem~\ref{thm:density}.

\section{Preliminaries} \label{sec:prel}

\subsection{Simplicial complexes}
A simplicial complex $Y$ is a family of sets that is closed under taking subsets. A set $f\in Y$ is called a face, and its dimension is $|f|-1$. The dimension of $Y$ is defined as the largest dimension of its faces. The set of $d$-dimensional faces in a simplicial complex $Y$ is denoted $Y_d$ and the $\ff$-vector of $Y$ is defined by $\ff_d(Y)=|Y_d|.$ We fix an order of the $0$-dimensional faces, aka vertices, of $Y$, and define the $d$-dimensional boundary operator $\partial_d=\partial_d(Y):\R^{Y_d}\to \R^{Y_{d-1}}$ by mapping $f=\{v_0<\cdots<v_d\}$ to
\[
 \partial_d(f) = \sum_{j=0}^{d}(-1)^j \left(f\setminus\{v_j\}\right)\,.
\]
Here faces are identified with their characteristic vectors. A basic fact in algebraic topology is that $\partial_{d}\partial_{d+1}=0$, and the $d$-th real Betti number of $Y$ is defined by 
\[ \beta_d=\beta_d(Y)=\dim{(\ker(\partial_d) / \mathrm{im}(\partial_{d+1}))}\,.\] 
In particular, if $Y$ is a $d$-dimensional complex then $\beta_d=\dim\ker\partial_d$.

\subsection{Stacked triangulations and the Fuss--Catalan numbers}
\label{subsection:stacked}
Fix $d\ge 2$ and $f=\{1,\ldots,d+1\}$. Let $\sC_d(s)$ be the number of combinatorially distinct stacked triangulations of the $d$-simplex $f$ with $s$ {\em unlabeled} internal vertices. We have that
\[
\sC_d(0)=1,\qquad \sC_d(s)~=\sum_{s_1+\cdots+s_{d+1}=s-1}\prod_{j=1}^{d+1}\sC_d(s_j),~s\ge 1\,.
\]
This recursive relation, that also describes the number of $(d+1)$-ary trees with $s$ internal vertices and $ds+1$ leaves, defines the Fuss--Catalan numbers:
\[ \sC_d(s) = \frac1{d s+1}\binom{(d+1)s}{s}
= \Big(\frac{d+1}{2\pi d^3}+o_s(1)\Big)^{1/2} s^{-3/2} (\alpha_d)^s
\]
for $\alpha_d = (d+1)^{d+1}/d^d$ as defined in~\eqref{eq:alpha-d}.

\begin{figure}
\begin{tikzpicture}[font=\small]
\node (fig1) at (0,0) {
\includegraphics[width=0.8\textwidth]{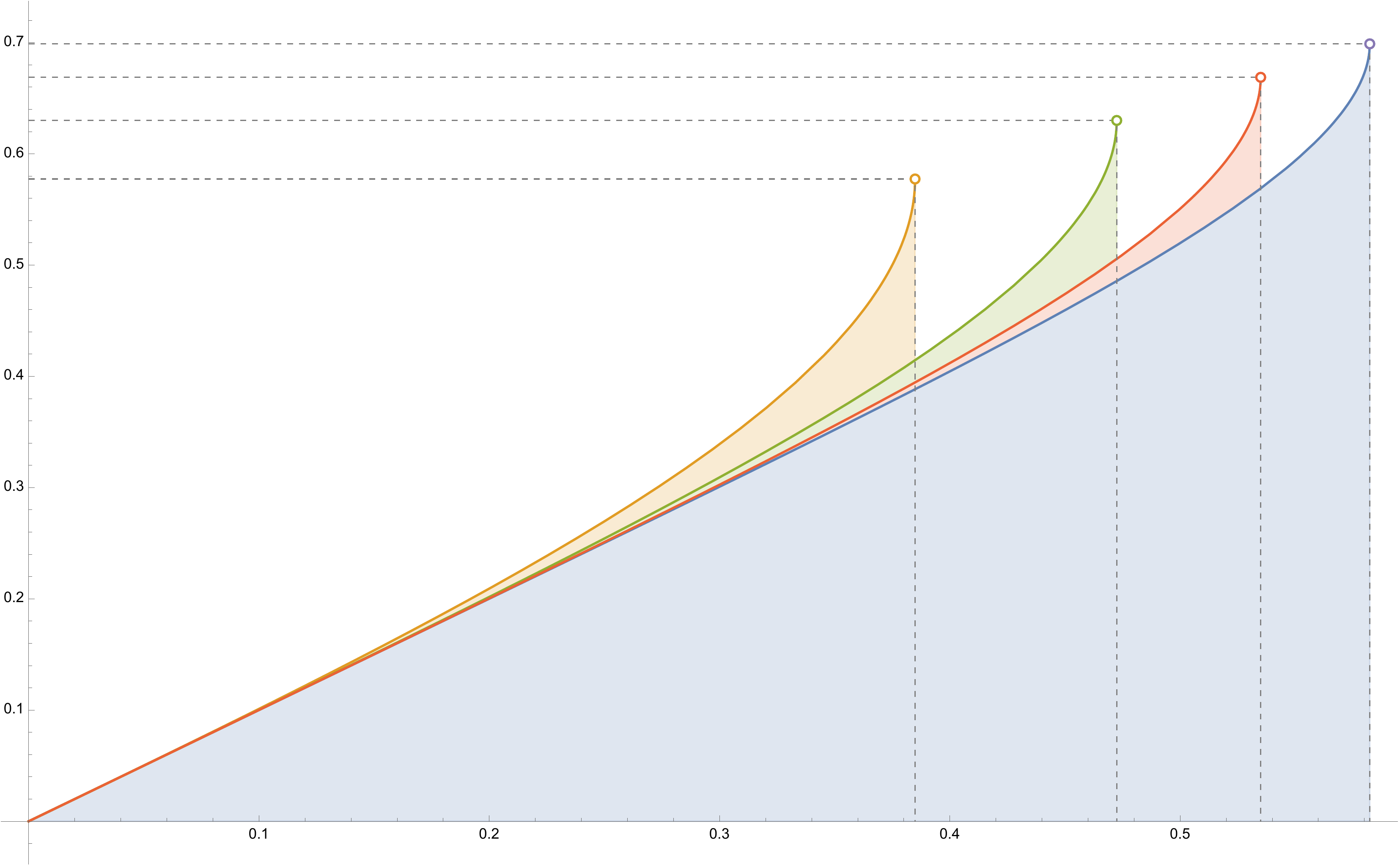}};
\node at (5.2,-2.85) {$\gamma$};
\node at (-4.9,3.4) {$f(\gamma)$};
\node[font=\tiny,color=gray] at (-5.5,1.85) {$3^{-1/2}$};
\node[font=\tiny,color=gray] at (-5.5,2.3) {$4^{-1/3}$};
\node[font=\tiny,color=gray] at (-5.5,2.6) {$5^{-1/4}$};
\node[font=\tiny,color=gray] at (-5.5,2.9) {$6^{-1/5}$};
\end{tikzpicture}
\vspace{-0.15in}
\caption{The generating function $f(\gamma)$ of the Fuss--Catalan numbers of order $d=2,3,4,5$; marked points are at  $(\alpha_d^{-1/d},(d+1)^{-1/d})$.}
\label{fig:gamma-hat}
\vspace{-0.15in}
\end{figure}

Using the recursive definition of the Fuss--Catalan number $\sC_d(s)$, it can be shown that the generating function $f(\gamma):=\sum_{s=0}^{\infty}\sC_d(s)\gamma^{ds+1}$ satisfies
\[
Q_\gamma(f(\gamma))=f(\gamma)^{d+1} - f(\gamma) + \gamma = 0,\] for every $0\le \gamma < \alpha_d^{-1/d}$. In fact, since $f(\gamma)$ is non-negative, continuous and monotone increasing in $\gamma$, it is the smaller among the two positive roots of this equation. See Figure~\ref{fig:gamma-hat} for an illustration of $f(\gamma)$ for $d=2,3,4,5$.

\subsection{Algebraic shifting}
Algebraic shifting, that  was introduced by Kalai~\cite{kalai1984,kalai1986}, is a procedure that, given a simplicial complex $Y$, produces another simplicial complex $\Delta(Y)$ with special and useful properties. There are two main variants of algebraic shifting: exterior and symmetric. Both variants can suit our purposes in this paper, and we choose to work with exterior shifting. For an extensive overview, we refer the reader to the interesting exposition in~\cite{kalai2002}. 

Let $K\subseteq\binom{[n]}{d+1}$ and $X$ be an $n\times n$ real matrix with generic entries.  Let $M$ be a $|K| \times \binom n{d+1}$ matrix, whose rows are indexed by $K$ and columns are indexed by $\binom{[n]}{d+1}$ ordered lexicographically. Each entry of $M$ is equal to the corresponding $(d+1) \times (d+1)$ minor of $X$. To construct $\Delta(K)$ we greedily choose a basis for the column space of $M$ by taking the columns that are not spanned by their predecessors in the lexicographic order, and add the indices of these columns to $\Delta(K)$. It is clear that $|\Delta(K)|=|K|$. 
In addition, $\Delta(K)$ is shifted, i.e., if $f\in\Delta(K)$, $u\notin f$, $v\in f$ and $u<v$ then $f\setminus\{v\}\cup\{u\}\in\Delta(K)$. We write $f'\leqP f$ if $f'$ can be obtained from $f$ by a sequence of shifts --- in which an element is replaced by a smaller one. In other words, $\{u_0<\cdots<u_{d}\} \leqP \{v_0<\cdots<v_{d}\}$ if and only if $u_i\le v_i~\forall i$. Note that the lexicographic order  is a linear extension of $\leqP$.

To compute the algebraic shifting of a simplicial complex $Y$ the above procedure is performed independently in each and every dimension, i.e., $\Delta(Y)=\bigcup_d \Delta(Y_d)$. Bj{\"o}rner and Kalai~\cite{BK88} showed that $\Delta(Y)$ is a simplicial complex with the same $\ff$-vector and the same Betti numbers as $Y$. Moreover, the Betti numbers of $\Delta(Y)$ are given combinatorially by
\[
\beta_d(Y)=\beta_d(\Delta(Y)) = b_d(\Delta(Y)):=|\{f\in\Delta(Y)_d~:~f\cup\{1\}\notin  \Delta(Y)\}|\,.
\]
Note that if $Y$ is a $d$-dimensional complex then so is $\Delta(Y)$ and
\[
b_d(\Delta(Y))=|\{f\in\Delta(Y)_d~:~1\notin f\}|\,.
\]

\section{The subcritical regime}
\label{sec:subcritical}
Our goal in this section is to establish the following result on the stacked contractions of $\{1,2,\ldots,d+1\}$ in $\cY_d(n,p)$, including those that allow repeated labels.

\begin{theorem}
\label{thm:subcrit-main}
Fix $d\geq 2$ and let $p = (1-\delta)(\alpha_d n)^{-1/d}$ for some $\delta=\delta(n) \in [0,\frac12]$. 
Let $\cE_s^\bo$ be the event that $\cY_d(n,p)$ admits a stacked contraction of $\bo=\{1,2,\ldots,d+1\}$ that features $s+d+1$ distinct labels. Then for every $0\leq s \leq (\log n)^{5/2}$,
\[ \P(\cE_s^\bo) \leq e^{-\delta d s} \big(p + n^{-2/d+o(1)}\big)\,.\]
\end{theorem}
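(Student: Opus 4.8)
I would prove Theorem~\ref{thm:subcrit-main} by a single first-moment estimate over \emph{stacked certificates}, organised so that the certificates with no repeated labels reproduce the heuristic main term $p$, while those that genuinely reuse labels are handled through Kalai's algebraic shifting. \emph{Step 1 (reduction to certificate DAGs).} From a stacked contraction of $\bo$, record the order of subdivisions and merge any two faces carrying the same $(d+1)$-set of labels; this yields a finite labelled directed acyclic graph $D$ whose source is $\bo$, in which every non-sink node is a $(d+1)$-set together with the $d+1$ children produced by one bistellar $0$-move, and whose sinks carry $d$-faces that must lie in $Y$. Write $i(D),\ell(D)$ for the numbers of internal nodes and sinks, $\lambda(D)$ for the number of distinct vertices of $[n]$ occurring, and $\xi(D)=d\,i(D)-\ell(D)+1\ge 0$ for the tree excess (the cycle rank of the underlying graph). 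Since each $0$-move introduces at most one new vertex, $i(D)\ge\lambda(D)-(d+1)$; hence, if $\cE^\bo_s$ holds then some such $D$ exists with $\lambda(D)=s+d+1$ (so $i(D)\ge s$), all of whose $\ell(D)$ distinct sink-faces belong to $Y$. As distinct $(d+1)$-sets are present independently in $\cY_d(n,p)$, this gives the finite bound
\[
\P(\cE^\bo_s)\ \le\ \sum_{D:\ \lambda(D)=s+d+1} p^{\,\ell(D)}\,,
\]
which I would split according to whether $i(D)=s$ or $i(D)\ge s+1$.

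\emph{Step 2 (certificates with $i(D)=s$: the main term).} When $i(D)=s$ and $\lambda(D)=s+d+1$, the $s$ subdivisions must introduce $s$ distinct vertices outside $\{1,\dots,d+1\}$, so no faces ever coincide, $D$ is a $(d+1)$-ary tree, and $\ell(D)=ds+1$. Such a $D$ is given by a $(d+1)$-ary tree shape with $s$ internal nodes --- of which there are $\sC_d(s)$ --- together with an injection of $s$ labels from $[n]\setminus\{1,\dots,d+1\}$; hence there are at most $\sC_d(s)\,s!\binom{n-d-1}{s}$ of them. Using $\sC_d(s)\le\alpha_d^{\,s}$ (which follows from $\binom{(d+1)s}{s}\le\alpha_d^{\,s}$), $s!\binom{n-d-1}{s}\le n^s$, and $p^{ds+1}=(1-\delta)^{ds+1}(\alpha_d n)^{-(ds+1)/d}$, this part of the sum is at most
\[
n^s\,\alpha_d^{\,s}\,(1-\delta)^{ds+1}(\alpha_d n)^{-(ds+1)/d}\ =\ (1-\delta)^{ds}\,p\ \le\ e^{-\delta d s}\,p\,,
\]
already within the asserted bound; for $s=0$ it is exactly $p=\P(\bo\in Y)$.

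\emph{Step 3 (certificates with $i(D)\ge s+1$: the crux, via algebraic shifting).} It remains to prove
\[
\sum_{D:\ \lambda(D)=s+d+1,\ i(D)\ge s+1} p^{\,\ell(D)}\ \le\ e^{-\delta d s}\,n^{-2/d+o(1)}\,,
\]
and this is where the hard work lies and where Kalai's shifting enters. The naive union bound collapses here: a reused label can splice a cycle-free shortcut into $D$, so $D$ may have far more internal nodes than $s$ while keeping $\ell(D)$ small (making $p^{\ell(D)}$ too large), and in any case the crude number of such $D$ is super-polynomial --- recall $s$ may be as large as $(\log n)^{5/2}$. Theorem~\ref{thm:a-b}, obtained by running Kalai's exterior algebraic shifting on the subcomplex of $\cY_d(n,p)$ spanned by the labels of the certificate and reading its Betti number off combinatorially, supplies exactly the missing control: it bounds the complexity of a stacked certificate --- its tree excess, equivalently the sink deficiency $ds+1-\ell(D)$ and the excess subdivision count $i(D)-s$ --- by a quantity governed by how many labels are reused. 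Granting this, I would (i) extract the prefactor from $(1-\delta)^{\ell(D)}\le e^{-\delta\ell(D)}\le e^{-\delta d s}$, using that Theorem~\ref{thm:a-b} keeps $\ell(D)\ge ds$ (indeed $\ge ds+1$, and strictly larger once $i(D)\ge s+1$); and (ii) show $\sum_{i(D)\ge s+1}(\alpha_d n)^{-\ell(D)/d}\le n^{-2/d+o(1)}$ by stratifying the certificates by $i(D)$ and by which labels they reuse, bounding the number in each stratum via Theorem~\ref{thm:a-b}, and exploiting that $(\alpha_d n)^{-\ell(D)/d}$ decays fast enough in the complexity of $D$ to swallow every sub-exponential-in-$n$ combinatorial factor. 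Adding Steps~2 and~3 gives $\P(\cE^\bo_s)\le e^{-\delta d s}(p+n^{-2/d+o(1)})$. The main obstacle is precisely this last bookkeeping: turning the algebraic-shifting inequality of Theorem~\ref{thm:a-b} into a certificate enumeration tight enough to leave only the stated $n^{-2/d+o(1)}$ slack; everything else is the clean Fuss--Catalan count of Step~2 together with routine estimates.
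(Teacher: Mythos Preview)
Your outline matches the paper's proof almost exactly: the paper's $(m,l,s)$-pedigrees are your certificate DAGs (with $m=i(D)$, $l=\ell(D)$), and the paper splits on $\aG:=\ell(D)-(ds+1)$ being $0$ (your Step~2, identical computation) versus $\ge 1$ (your Step~3), with Theorem~\ref{thm:a-b} as the algebraic-shifting input. For the bookkeeping you leave open in Step~3, the paper first uses a crude bound (ignoring the DAG structure, just counting $l$ faces on $s+d+1$ labels) to reduce to the range $\sqrt{\log n}\le s$ and $\aG\le s^{5/8}$, so that Theorem~\ref{thm:a-b} gives $\bG\le \aG s^{1/3}$ and $m=(1+o(1))s$; it then enumerates unlabeled DAGs by encoding each as a spanning $(d+1)$-ary tree plus $\bG$ extra edges, at most $((d+1)m)^{2\bG}\sC_d(m)$ choices, and labels them via $\binom{n}{s}\binom{m}{s}s!\,s^{m-s}$, after which the pieces combine to $e^{-\delta d s}n^{-(1/d-o(1))(\aG+1)}$.
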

The lower bound in our main theorem will follow immediately from the above theorem in light of the following simple fact.

\begin{observation}\label{obs:reduce-labels}
Suppose $K\subseteq \binom{[n]}{d+1}$ admits a stacked contraction of a face $f$ using $s+d+1$ distinct labels. Then for every $0 \leq k < s$, there is some $f'$ which has a stacked contraction in $K$ using $s'+d+1$ distinct labels, where $k/(d+1) \leq s' \leq k$. 
\end{observation}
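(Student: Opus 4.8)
\emph{Sketch.} The plan is to exploit the recursive tree structure behind a stacked triangulation and run a ``heavy-path'' descent on it.

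\emph{Setting up the tree.} Given that $K$ admits a stacked contraction of $f$ with $s+d+1$ distinct labels, fix a labelled stacked triangulation $T$ realizing it, and record the rooted $(d+1)$-ary tree describing how $T$ is assembled from bistellar $0$-moves: the root is the simplex $f$; each internal node $v$ is a $d$-simplex that gets subdivided by a move, with its $d+1$ children being the resulting sub-simplices; the leaves are exactly the top-dimensional faces of $T$. Let $f_v\subseteq[n]$ be the label set of the simplex at node $v$. Since a bistellar $0$-move deletes only a top face while keeping its whole boundary, $\partial f_v\subseteq T$ for every $v$; hence $|f_v|=d+1$, and, more importantly, the subtree of $T$ rooted at $v$ is itself a stacked triangulation of $f_v$ all of whose faces lie in $T$. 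It therefore witnesses a stacked contraction of $f_v$ in $K$. Let $s_v$ be the number of distinct labels occurring in that subtree, minus $d+1$ (so the subtree uses $s_v+d+1$ distinct labels). This quantity is $s$ at the root and $0$ at every leaf, and it is at least $1$ at every internal node: the vertex $u$ created by the move at $v$ lies in every child face $f_v\setminus\{x_i\}\cup\{u\}$, each of which has $d+1$ distinct labels, so the label of $u$ avoids $f_v\setminus\{x_i\}$ for all $i$ and hence (as $d+1\ge2$) avoids $f_v$ entirely, contributing a new label.

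\emph{The recursion.} For an internal node $v$ with children $v_1,\dots,v_{d+1}$, the set of labels appearing in the subtree at $v$ is the union over $i$ of the label sets of the subtrees at $v_i$. The label set of the subtree at $v_i$ is the disjoint union of the $d+1$ labels of $f_{v_i}$ and $s_{v_i}$ further labels, while $\bigcup_i f_{v_i}=f_v\cup\{u\}$ has only $d+2$ elements (the child faces all contain $u$ and pairwise share a $(d-1)$-face, so together they add just the one label $u$ beyond $f_v$). Hence
\[
s_v\ \le\ 1+\sum_{i=1}^{d+1}s_{v_i},
\]
while trivially $s_{v_i}\le s_v$. In particular a child of maximal rank has rank at least $(s_v-1)/(d+1)$.

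\emph{The descent and conclusion.} Starting from the root and repeatedly passing to a child of maximal rank, the path strictly descends in the finite tree and so ends at a leaf; let $s=r_0\ge r_1\ge\cdots\ge r_N=0$ be the ranks along it. By the above, $r_{t+1}\ge(r_t-1)/(d+1)$ whenever $r_t\ge1$. Fix $0\le k<s$. If $k=0$, any leaf gives a face $f'$ with a stacked contraction using $d+1$ distinct labels, i.e.\ $s'=0$. If $k\ge1$, let $t$ be minimal with $r_t\le k$; it exists because $r_0=s>k$ and $r_N=0\le k$, and $t\ge1$. Then $r_{t-1}\ge k+1\ge1$, so the node at level $t-1$ is internal, the recursion applies, and $r_t\ge\big((k+1)-1\big)/(d+1)=k/(d+1)$. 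Thus the face $f'=f_{v_t}$ has a stacked contraction in $K$ using $s'+d+1$ distinct labels with $k/(d+1)\le s'=r_t\le k$, as required.

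\emph{Expected obstacle.} The delicate step is the inequality $s_v\le 1+\sum_i s_{v_i}$: a naive count would charge each child its full $d+1$ boundary labels afresh, producing an additive slack of order $d^2$ in the recursion and a descent too lossy to land inside the narrow window $[k/(d+1),k]$. The point is precisely the heavy overlap of the $d+1$ child faces, so that together they contribute exactly one new label. The only other thing requiring a word of care is the assertion that each subtree genuinely determines a stacked contraction of its root face, which is immediate once one records that a bistellar $0$-move removes a single top face and preserves its whole boundary.
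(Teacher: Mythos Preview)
Your argument is correct and is essentially identical to the paper's: both descend through the recursive tree structure, use the inequality $s_v\le 1+\sum_i s_{v_i}$ (coming from the fact that the $d+1$ children together introduce only the single new label $u$) to ensure some child has $s_{v_i}\ge (s_v-1)/(d+1)$, and stop at the first step where the label count drops to at most $k$. The only cosmetic difference is that the paper guarantees termination via the strictly decreasing number of internal vertices $m_i$, whereas you phrase it as reaching a leaf of the finite tree; both are fine.
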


\begin{proof}
Take $s\geq 1$ (or there is nothing to show), and consider a stacked contraction for $f$, where $z\in[n]\setminus f$ subdivides $f$ into $f^{(1)},\ldots,f^{(d+1)}$, and each $f^{(i)}$ has a stacked contraction in $K$ via $s^{(i)}+d+1$ distinct labels. Then $s \leq \sum s^{(i)} + 1$ (the last term accounting for the label $z$), hence there exists an $f^{(i)}$ with $s^{(i)} \geq (s-1)/(d+1) $ and at least one fewer internal vertex in its underlying triangulation than $f$. Set $f_0=f$ and $s_0=s$, and let $f_i$ be the face obtained in this manner from $f_{i-1}$; namely, it has
$m_i$ internal vertices and $s_i+d+1$ distinct labels in its stacked triangulation in~$K$, and
\[ \frac{s_{i-1}-1}{d+1}\leq s_i \leq s_{i-1}\qquad,\qquad m_i < m_{i-1}\,.\]
Let $t\geq 1$ be the minimal index such that $s_t \leq k$ (since the $m_i$'s are strictly decreasing, and $m_i=0$ would imply $s_i=0$, we have $t \leq m_0$).
Then we also have that $s_t \geq (s_{t-1}-1)/(d+1) \geq k/(d+1)$ by definition, thus $f'=f_t$ is as required.
\end{proof}

\begin{corollary}\label{cor:subcrit}
Fix $d\geq 2$ and let $p = (1-\frac1{\log n})(\alpha_d n)^{-1/d}$. Then the probability that $\cY_d(n,p)$ admits a stacked contraction of $\{1,2,\ldots,d+1\}$ is at most $n^{-1/d+o(1)}$.
\end{corollary}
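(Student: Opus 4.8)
The plan is to split the event that $\cY_d(n,p)$ admits a stacked contraction of $\bo=\{1,\dots,d+1\}$ according to the number of \emph{distinct} labels it features, handle the bounded range directly with Theorem~\ref{thm:subcrit-main}, and reduce the unbounded range to the bounded one via Observation~\ref{obs:reduce-labels}. Throughout I would set $\delta=1/\log n$, so that $p=(1-\delta)(\alpha_d n)^{-1/d}$ with $\delta=\delta(n)\in[0,\tfrac12]$ for all large $n$, and put $k=\lfloor(\log n)^{5/2}\rfloor$; the event in question is then $\bigcup_{s\ge0}\cE_s^\bo$.

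First I would treat the terms $0\le s\le k$. Here Theorem~\ref{thm:subcrit-main} applies verbatim and gives $\P(\cE_s^\bo)\le e^{-\delta d s}\bigl(p+n^{-2/d+o(1)}\bigr)$, so summing the geometric series,
\[
\sum_{s=0}^{k}\P(\cE_s^\bo)\ \le\ \frac{p+n^{-2/d+o(1)}}{1-e^{-\delta d}}\ =\ \bigl(p+n^{-2/d+o(1)}\bigr)\,\frac{\log n}{d}\,(1+o(1))\ =\ n^{-1/d+o(1)},
\]
using $1-e^{-\delta d}=\tfrac{d}{\log n}(1+o(1))$, $p=\Theta(n^{-1/d})$, $\log n=n^{o(1)}$, and $n^{-2/d+o(1)}=o(n^{-1/d})$. (The factor $(\log n)/d$ appearing here is precisely the source of the harmless $n^{o(1)}$ in the statement.)

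Next I would dispose of the terms $s>k$. If $\cE_s^\bo$ holds for some such $s$, then Observation~\ref{obs:reduce-labels}, applied with this $k$ (note $0\le k<s$), yields a face $f'\in\binom{[n]}{d+1}$ admitting a stacked contraction in $\cY_d(n,p)$ with $s'+d+1$ distinct labels for some $\lceil k/(d+1)\rceil\le s'\le k$. By permutation invariance of $\cY_d(n,p)$ the probability of this for a fixed $f'$ equals $\P(\cE_{s'}^\bo)$, and since $s'\le k\le(\log n)^{5/2}$ Theorem~\ref{thm:subcrit-main} still applies; moreover $e^{-\delta d s'}\le \exp\!\bigl(-\tfrac{d}{d+1}(\log n)^{3/2}\bigr)$ on this range, as $\delta d s'\ge \tfrac{d}{d+1}(\log n)^{3/2}$. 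A union bound over the at most $n^{d+1}$ choices of $f'$ and the at most $k$ choices of $s'$, together with $p+n^{-2/d+o(1)}\le1$, then gives
\[
\P\Bigl(\bigcup_{s>k}\cE_s^\bo\Bigr)\ \le\ n^{d+1}\,k\,\exp\!\Bigl(-\tfrac{d}{d+1}(\log n)^{3/2}\Bigr)\ =\ \exp\!\bigl(-\Theta((\log n)^{3/2})\bigr),
\]
which is $o(n^{-C})$ for every fixed $C$, hence negligible. Adding the two estimates yields $\P\bigl(\bigcup_{s\ge0}\cE_s^\bo\bigr)\le n^{-1/d+o(1)}$, as claimed.

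I do not expect a genuine obstacle here once Theorem~\ref{thm:subcrit-main} and Observation~\ref{obs:reduce-labels} are in hand; the only point that requires thought is that Theorem~\ref{thm:subcrit-main} is established only for $s\le(\log n)^{5/2}$, so stacked contractions using more labels must be controlled by some other means. Observation~\ref{obs:reduce-labels} is exactly the tool for this: it trades a single contraction of $\bo$ with many labels for the existence of a contraction with few labels of \emph{some} face, at the price of a $\mathrm{poly}(n)$ union bound — a price that is overwhelmed by the factor $e^{-\delta d s'}$, since on the relevant range $\delta d s'\gtrsim(\log n)^{3/2}\to\infty$. Everything else is routine bookkeeping of the $n^{o(1)}$ factors.
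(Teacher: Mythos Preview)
Your proof is correct and follows essentially the same route as the paper: handle $0\le s\le (\log n)^{5/2}$ by summing the bound of Theorem~\ref{thm:subcrit-main}, then for larger $s$ invoke Observation~\ref{obs:reduce-labels} to land in the range $[\tfrac{1}{d+1}(\log n)^{5/2},(\log n)^{5/2}]$ for \emph{some} face, where the factor $e^{-\delta d s'}=n^{-\Theta(\sqrt{\log n})}$ beats the $n^{d+1}$ union bound over faces. Your presentation is in fact a bit more explicit than the paper's (you spell out the geometric sum and the resulting $\log n$ factor), but the structure and ingredients are identical.
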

\begin{proof}
A union bound on the estimate from Theorem~\ref{thm:subcrit-main} shows that 
\[ \P\left( \bigcup\{\cE_s^\bo \,:\; s\leq (\log n)^{5/2}\}\right ) \leq n^{-1/d+o(1)}\,.\]
Furthermore, if $\cE_s^f$ is the analog of $\cE_s^\bo$ corresponding to a stacked contraction of a face $f=\{x_1,\ldots,x_{d+1}\}$ (rather than $\bo=\{1,\ldots,d+1\}$) then 
\begin{equation}
\label{eq:cE_s^f}
 \P\left( \bigcup\{\cE_s^f \,:\; f \in \tbinom{[n]}{d+1}\,,\, \tfrac1{d+1} (\log n)^{5/2} \leq s\leq (\log n)^{5/2}\}\right ) \leq n^{-(\frac d{d+1}-o(1)) \sqrt{\log n}}\,,
\end{equation}
as the probability estimate from Theorem~\ref{thm:subcrit-main} is  at most $n^{-\frac d{d+1}\sqrt{\log n}}$ for any $f,s$ as above. 
Combining these estimates with Observation~\ref{obs:reduce-labels} concludes the proof.
\end{proof}

Theorem~\ref{thm:subcrit-main} will be derived by a first moment argument on the number of stacked contractions of $\{1,2,\ldots,d+1\}$. It is natural to view a stacked contraction of a $d$-simplex $f$ whose vertices have distinct labels as a $(d+1)$-ary tree of $d$-simplices:  the root of the tree is $f$, and the $(d+1)$ children of every node in the tree are the  $d$-simplices that subdivided it during the recursive construction of the stacked triangulation. In consequence, the leaves of this tree are the $d$-faces of the stacked triangulation. More generally, an arbitrary stacked contraction can be viewed as a directed acyclic graph (DAGs) whose vertices are a subset of $\binom{[n]}{d+1}$, as follows.

\begin{definition}[an $(m,l,s)$-pedigree] Let $G$ be a DAG on a vertex set $V\subseteq \binom{[n]}{d+1}$. We say that $G$ is an $(m,l,s)$-pedigree for a vertex $\bo\in V$ if the following hold:
\begin{enumerate}[(1)]
    \item{}[root] $\bo$ is the unique vertex in $G$ with in-degree $0$.
    \item{}[out-degrees] $G$ has $m+l$ vertices, of which $m$ have out-degree $d+1$ (internal) and the remaining $l$ have out-degree $0$ (leaves). 
    \item{} [labels] the cardinality of $\bigcup_{\bu\in V} \bu$ (all labels from $[n]$ appearing in $G$) is $s+d+1$.
    \item{}[stacking] If a vertex $\mathbf u =  \{x_1<x_2<\ldots<x_{d+1}\}$ has outgoing edges to $\mathbf v_1,\ldots,\mathbf v_{d+1}$, then there is some $z \notin \bu$ so that $\mathbf v_i = \bu \setminus \{x_i\}\cup \{z\}$ for all $i$. The label $z$ is called is the outgoing label of $\bu$. 
\end{enumerate}
\label{def:mls}
\end{definition}

One can easily verify (by the recursive definition of a stacked triangulation) that if a simpicial complex $Y$ admits a stacked contraction of a $d$-simplex $f$, then there exists a DAG as above whose leaves are all members of $Y$, and whose root $\bo$ is equal to $f$.

A crucial ingredient in the proof of Theorem~\ref{thm:subcrit-main} will be the following result, which bounds the tree excess of an $(m,l,s)$-pedigree in terms of the surplus in $s$, its total number of labels.
\begin{theorem}
\label{thm:a-b}
Let $d\geq 2$. For every $(m,l,s)$-pedigree $G$, if $\aG$ is given by
\begin{equation}\label{eq:a-def}
\aG := l - (d s + 1)
\end{equation}
(the excess in leaves) and $\bG$ is the tree excess
\begin{equation}\label{eq:b-def}
\bG := |E(G)| - (|V(G)|-1)  = d m - (l - 1)\,,
\end{equation}
then $\aG\geq 0$ and
\begin{equation}\label{eq:a-b-bound} \bG \leq d \aG^{(d+1)/d} + (d^2 - 1) \aG\,.\end{equation}
\end{theorem}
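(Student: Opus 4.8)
\emph{Reduction.} Since $\bG = dm-l+1$ and $l = ds+1+\aG$, one has $\bG = d(m-s)-\aG$, so the claimed inequality is equivalent to $m-s\le \aG^{(d+1)/d}+d\aG$. Thus it suffices to prove (a) $\aG\ge 0$, i.e.\ $l\ge ds+1$, and (b) $m-s\le \aG^{(d+1)/d}+d\aG$. Both are consequences of the structure of the following auxiliary complex.

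\emph{The auxiliary complex.} Identify the $s+d+1$ labels of $G$ with $[N]$, $N=s+d+1$, ordering them so that the $d+1$ vertices of $\bo$ (and then the outgoing labels used most often) come first. Let $Z=Z(G)$ be the $(d+1)$-dimensional complex on $[N]$ consisting of the complete $(d-1)$-skeleton, the $d$-faces $V(G)$, and the $(d+1)$-faces $\hat\bu:=\bu\cup\{z_\bu\}$ for internal $\bu$ (this is a simplicial complex: every facet of $\hat\bu$ is a vertex of $G$). Performing the elementary $(d+1)$-expansions $(\hat\bu,\bu)$ in a topological order of $G$ exhibits $Z$ as obtained from $Z^\flat:=(\text{complete }(d-1)\text{-skeleton})\cup(\text{leaves of }G)$ by a sequence of expansions; in particular $Z\simeq Z^\flat$, and (examining the same topological order) $\partial_{d+1}$ is injective on $\{\hat\bu\}$, so $\beta_{d+1}(Z)=0$, while $\beta_i(Z)=0$ for $1\le i\le d-2$ and $\beta_0(Z)=1$. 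Counting faces gives $\chi(Z)=1+(-1)^d\big(l-\binom{s+d}{d}\big)$, hence
\[
\beta_d(Z)-\beta_{d-1}(Z)=l-\tbinom{s+d}{d}.
\]

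\emph{Algebraic shifting and Kruskal--Katona.} Let $\Delta:=\Delta(Z)$, which is shifted with the same $f$-vector and Betti numbers. Since $\beta_{d+1}(Z)=0$, every $(d+1)$-face of $\Delta$ contains the vertex $1$; let $\mathcal D=\{f\setminus\{1\}:f\in\Delta_{d+1}\}$, a shifted family of $m$ sets of size $d+1$ on $\{2,\dots,N\}$. Because the $(d-1)$-skeleton of $Z$ is complete, $\Delta_{d-1}=\binom{[N]}{d}$, and a short computation using the Björner--Kalai formula shows the number of $d$-faces of $\Delta$ through vertex $1$ equals $\binom{s+d}{d}-\beta_{d-1}(Z)=l-\beta_d(Z)$; these form $\{1\}*\mathcal C$ for a shifted family $\mathcal C$ of $(d-1)$-faces, and since $\Delta$ is a complex, $\partial\mathcal D\subseteq\mathcal C$. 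Therefore $|\partial\mathcal D|\le l-\beta_d(Z)\le l$, and Kruskal--Katona bounds the left side from below by the shadow function of $m$ sets of size $d+1$ (which, in the regime where it is realized by families of the "tree" type, equals $ds+1$ when $m=s$, and in general grows like $m^{d/(d+1)}$).

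\emph{From the shadow bound to the sharp estimate.} The coarse bound above must be bootstrapped. I would pass to the link $\mathrm{lk}_\Delta(1)$ — again a shifted complex of one lower dimension with the same "$m$ top faces over a complete lower skeleton" shape and $|\mathcal C|=l-\beta_d(Z)$ codimension-one faces — and iterate through $\mathrm{lk}_\Delta(1,2),\mathrm{lk}_\Delta(1,2,3),\dots$, at each stage using the Björner--Kalai formula to control the top Betti number of the current link and a Kruskal--Katona shadow estimate to bound how much the face count can grow as one climbs back up the dimensions. Summing these deficits, the exponent $(d+1)/d$ — which is precisely the Kruskal--Katona exponent relating $d$- to $(d-1)$-faces — appears, with the linear term $d\aG$ and the coefficient $d^2-1$ coming from the $d+1$ boundary facets of $\bo$ together with the depth of the recursion; this yields $\aG\ge 0$ and $m-s\le\aG^{(d+1)/d}+d\aG$.

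\emph{Main obstacle.} The difficulty is entirely in the last step. The plain Kruskal--Katona inequality only gives that the shadow of $m$ $(d+1)$-sets is at most $l$, which is far from the asserted bound; extracting the sharp constants requires handling the shifted families produced at each level (which need not be compressed/colex-initial) and the link recursion, where — unlike the initial vanishing $\beta_{d+1}(Z)=0$ — the Mayer--Vietoris connecting maps for $\mathrm{del}$ and $\mathrm{lk}$ of the smallest vertex need not vanish, so the Betti numbers of the links must be tracked, not merely bounded by zero. This is exactly the point at which Kalai's shifting machinery does work that the combinatorial first-moment estimates cannot.
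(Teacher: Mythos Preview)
Your setup is in the right spirit---algebraic shifting plus a Kruskal--Katona step---but there is a genuine gap precisely where you flag it, and the paper's resolution is not the link recursion you sketch.

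The paper works with the $d$-dimensional complex $\bar K$ (the vertex set of $G$, closed under subsets), not your $(d+1)$-dimensional $Z$. The decisive lemma, which you are missing, is the following structural fact about $\Delta(\bar K)$: the face $\bv_0 := [d+1]\cup\{s+d+1\}\setminus\{2\}$ belongs to $\Delta(\bar K)$. This is proved by induction on $|V(G)|$, peeling off an internal vertex all of whose children are leaves; when its outgoing label $z$ is new, one writes $\bar K = \bar K' \cup \bar L$ as a union along a single $d$-face and invokes Nevo's formula for the shifted complex of such a union to push the inductive hypothesis (that $\bv_0'=[d+1]\cup\{s+d\}\setminus\{2\}\in\Delta(\bar K')$) up to $\bv_0\in\Delta(\bar K)$. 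This single face forces, by shiftedness, that $\Delta(\bar K)$ contains the entire down-set $F=\{f\ni 1: f\leqP \bv_0\}$ of size exactly $ds+1$, and the complementary count $\aGhat:=|\{f\in\Delta(\bar K)_d:1\in f\}\setminus F|$ satisfies $\aGhat\le\aG$ via the elementary bound $\beta_d\ge m$. The remaining faces (those avoiding $1$) are then split according to how many elements exceed $d+1$; the only place Kruskal--Katona enters is to bound the number of faces with \emph{all} elements $>d+1$ by $\aGhat^{(d+1)/d}$, since each such face has its whole shadow inside $\widehat F\setminus\{1\}$.

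Your shadow inequality $|\partial\mathcal D|\le l$ is true but, as you note, far too weak: it gives roughly $m\lesssim l^{(d+1)/d}$, not $m-s\lesssim\aG^{(d+1)/d}$. The missing idea is not an iteration through links but the identification of the specific $\bv_0$ (and hence the $ds+1$ ``guaranteed'' faces) in the shifted complex, which is what isolates the small surplus $\aGhat$ to which Kruskal--Katona can be profitably applied.
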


\begin{remark}\label{fig:the-frustrating-construction}
The bound $\bG \leq c_d \aG^{(d+1)/d}$ in~\eqref{eq:a-b-bound} is tight up to the value of the constant prefactor $c_d$, as one may consider, for any fixed $d\geq 2$ and any $k> d+1$, the $(m,l,s)$-pedigree $G_k$ where the $j$-th level of the breadth-first-search tree from $\bo$ consists of $\{j\} \cup f$ for all $f\in\binom{[j-1]}{d}$. Here $m = \sum_{j\leq k-2} \binom{j}{d}$, $l = \binom{k-1}{d+1}$, $s=k - (d+1)$, and $\bG$ is of order $\aG^{(d+1)/d}$.
(Figure~\ref{fig:b-a-3/2} illustrates this for $d=2$.) 
\end{remark}

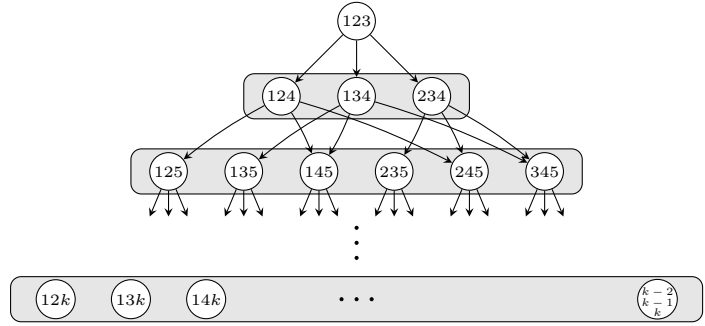
\begin{figure}
    \centering
    \vspace{-0.12in}
    \begin{tikzpicture}[every node/.style={draw, circle, font=\tiny, inner sep=1pt, fill=white}, edge from parent/.style={draw, -stealth}, level distance = 30pt]
    \node (v123) {123};
    \draw[rounded corners, fill=gray!20] (-1.5,-0.7) rectangle (1.5,-1.3);
    \draw[rounded corners, fill=gray!20] (-3,-1.7) rectangle (3,-2.3);
    \draw[rounded corners, fill=gray!20] (-4.6,-3.4) rectangle (4.6,-4);
    \foreach \i\v in {-1/124,0/134,1/234}
    { \node (v\v) at (\i,-1){\v};  }
    \foreach \i\v in {-2.5/125, -1.5/135, -0.5/145, 0.5/235, 1.5/245, 2.5/345}
    {   \node (v\v) at (\i,-2){\v};
        \foreach \w in {-0.25, 0, 0.25}    
        { \draw[-stealth] (v\v) -- +(\w,-0.6); }
    }
    \foreach \i in {-2.75, -2.95, -3.15}
    { \node[scale=0.4,fill=black] at (0,\i) {}; }
    \node at (-4,-3.7) {$12k$};
    \node at (-3,-3.7) {$13k$};
    \node at (-2,-3.7) {$14k$};
    \node[scale=5.3] at (4,-3.7) {};
    \node[draw=none,fill=none,scale=0.7] at (4,-3.6) {$k-2$};
    \node[draw=none,fill=none,scale=0.7] at (4,-3.75) {$k-1$};
    \node[draw=none,fill=none,scale=0.7] at (4,-3.87) {$k$};
    \foreach \i in {-0.2,0,0.2}
    { \node[scale=0.4,fill=black] at (\i,-3.7) {}; }
    \foreach \i in {124, 134, 234}
    { \path[-stealth] (v123) edge (v\i); }
    \path[-stealth] (v124) edge [bend right=5] (v125);
    \path[-stealth] (v124) edge [bend left=5] (v145);
    \path[-stealth] (v124) edge [bend left=5] (v245);
    \path[-stealth] (v134) edge [bend right=5] (v135);
    \path[-stealth] (v134) edge [bend left=5] (v145);
    \path[-stealth] (v134) edge [bend left=5] (v345);
    \path[-stealth] (v234) edge [bend left=5] (v235);
    \path[-stealth] (v234) edge [bend left=5] (v245);
    \path[-stealth] (v234) edge [bend left=5] (v345);
    \end{tikzpicture}
    \vspace{-0.07in}
    \caption{A family of $(m,l,s)$-pedigrees $\{G_k\}_{k\geq 3}$ for $d=2$  where $\bG$ is of order $\aG^{3/2}$ (namely, 
    $\bG\sim m=\sum_{j=2}^{k-2} \binom{j}2$, $\aG \sim l=\binom{k-1}2$ and $s=k-3$).}
    \vspace{-0.07in}
    \label{fig:b-a-3/2}
\end{figure}

\subsection{Proof of Theorem~\ref{thm:a-b} via algebaric shifting}
Whereas the fact that $\bG\geq 0$, that is, $dm \geq l-1$, is trivial from the representation of $\bG$ as the tree excess of (the undirected underlying graph of) $G$, the fact that $\aG\geq 0$ is less obvious. 
We note that it can be obtained by a direct adaptation of the methods in~\cite{BBM}. An additional proof of this fact appears in Proposition~\ref{prop:a-geq-0}, which has the benefit of also expressing $\aG$ in terms of the rank of an explicit matrix. Unfortunately, both these arguments do not provide any quantitative information. However, the matrix in Proposition~\ref{prop:a-geq-0} is a rigidity matrix, which suggests the relevance of algebraic shifting to this problem. Indeed, we will eventually obtain the fact that $\aG\ge0$ as a byproduct of the argument establishing~\eqref{eq:a-b-bound} via algebraic shifting.  

\begin{claim}\label{clm:Delta(K)}
Let $K \subseteq \binom{[s+d+1]}{d+1}$ be the 
vertex set of an $(m,l,s)$-pedigree with $s\geq 1$, and let $\bar K$ denote the $d$-dimensional complex consisting of $K$ and all of its subsets. Then the shifted complex $\Delta(\bar K)$ contains $\bv_0 := [d+1]\cup\{s+d+1\}\setminus\{2\}$.
\end{claim}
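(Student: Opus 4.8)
The plan is to use the shiftedness of $\Delta(\bar K)$ to reduce the statement to producing a single suitable $d$-face, and then to extract that face from the Betti numbers of $\bar K$ via the Bj\"orner--Kalai identities. Recall that $f\in\Delta(\bar K)_d$ iff the column of the minor matrix $M$ indexed by $f$ is not spanned by the lexicographically earlier columns, equivalently iff the subspace $\operatorname{span}\{f_S:S\in K\}\subseteq\bigwedge^{d+1}\R^n$ spanned by the generic Pl\"ucker vectors $f_S=\bigwedge_{i\in S}f_i$ contains an element whose lexicographically first nonzero coordinate is at position $f$. Since $\Delta(\bar K)$ is shifted (closed downward under $\leqP$), it contains $\bv_0$ as soon as $\Delta(\bar K)_d$ contains any $f$ with $\bv_0\leqP f$; and a short check shows that a $d$-face $f$ dominates $\bv_0$ in $\leqP$ precisely when the top label $s+d+1$ belongs to $f$ while $\{1,2\}\not\subseteq f$. (One also computes $|\{T:T\leqP\bv_0\}|=ds+1$, so $\bv_0$ is, in a sense, the largest face one could hope for here.) So the task becomes: find a $d$-face of $\Delta(\bar K)$ that contains the label $s+d+1$ but not both of $1,2$.

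I would obtain such a face in two movements. First, since $s\ge1$ the pedigree has at least one internal vertex $\bu_1$ with outgoing label $z_1$, and the simplicial boundary $\partial_{d+1}(\bu_1\cup\{z_1\})$ is a nonzero $d$-cycle in the $d$-dimensional complex $\bar K$, hence non-bounding; so $\beta_d(\bar K)\ge1$, which by Bj\"orner--Kalai means $\Delta(\bar K)_d$ has a face avoiding the label $1$, and then by shiftedness $\{2,3,\dots,d+2\}\in\Delta(\bar K)_d$. Second, I would establish that $\beta_j(\bar K)=0$ for $1\le j\le d-1$, by assembling $\bar K$ from the boundary spheres $\partial(\bu_j\cup\{z_j\})\cong S^d$ in breadth-first order and observing that each attaching locus is a proper subcomplex of $S^d$ containing the already-present facet $\bu_j$; such a subcomplex of $\partial\Delta^{d+1}$ (when it is a union of facets of $\bu_j\cup\{z_j\}$) has the form $\partial\sigma\ast\tau$ with $\tau$ a simplex, hence is contractible, so $\bar K$ is homotopy equivalent to a wedge of $d$-spheres. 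Then $b_{d-1}(\Delta(\bar K))=\beta_{d-1}(\bar K)=0$, which forces every $(d-1)$-face of $\Delta(\bar K)$ avoiding $1$ to cone over the vertex $1$ into a $d$-face; combining this ``coning'' property with the fact that $\bar K$ uses all $s+d+1$ labels (so its $(d-1)$-skeleton, and hence $\Delta(\bar K)_{d-1}$, is large enough), one should be able to force $\{3,4,\dots,d+1,s+d+1\}\in\Delta(\bar K)_{d-1}$, and coning over $1$ then gives $\bv_0\in\Delta(\bar K)_d$.

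The genuinely delicate part is the homotopy claim $\beta_j(\bar K)=0$ for $j<d$, which can fail for pathological pedigrees: if the outgoing label $z_j$ is an already-used label that has never before appeared in a face together with any vertex of $\bu_j$, then the attaching locus is the simplex on $\bu_j$ together with the isolated vertex $\{z_j\}$, hence disconnected, and gluing $S^d$ along a disconnected subcomplex produces $(d-1)$-homology. I see two routes around this: (a) argue from the order in which labels first appear along directed paths out of $\bo$ that this degenerate configuration cannot actually occur in a pedigree; or (b) exploit monotonicity of algebraic shifting---if $K'\subseteq K$ then $\Delta(\bar{K'})\subseteq\Delta(\bar K)$, because enlarging the row set of $M$ cannot turn a pivot column into a non-pivot one---to pass to a sub-pedigree $K'\subseteq K$ that still spans all $s+d+1$ labels but in which every outgoing label is new at the moment it appears (for instance a ``caterpillar'' tree pedigree obtained by retaining, for each label outside $\bo$, a completed directed path from $\bo$ to the vertex introducing it); for such $K'$ one has cleanly $\bar{K'}\simeq\bigvee_{s}S^d$ and the argument above applies unchanged. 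The remaining, more bookkeeping-heavy issue is the last step of the second paragraph---pinning down that it is exactly the $(d-1)$-face $\{3,\dots,d+1,s+d+1\}$, rather than merely some $(d-1)$-face dominating it, that lands in $\Delta(\bar K)$---which should follow by comparing $\ff_{d-1}(\bar K)$ with the combinatorics of shifted $(d-1)$-complexes having vanishing $b_{d-1}$.
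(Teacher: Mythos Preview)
Your approach is different from the paper's, and it has a genuine gap that I do not see how to close.

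\textbf{What the paper does.} The paper proves the claim by induction on $m+l$. At the inductive step it removes an internal vertex $\bu$ whose $d+1$ children are all leaves, with outgoing label $z$. If $z$ still appears in the smaller pedigree, the monotonicity fact $\Delta(\bar K')\subseteq\Delta(\bar K)$ (for $\bar K'\subseteq\bar K$ with the same vertex set) finishes the step. If $z$ is now absent, one may assume $z=s+d+1$; writing $L=\{\bu\}\cup\{\bu\cup\{z\}\setminus\{x_i\}\}$, the complexes $\bar K'$ and $\bar L$ meet in the single $d$-face $\bu$, and Nevo's theorem on the algebraic shift of a union along a simplex gives $\bv_0\in\Delta(\bar K)$ directly from $\bv_0'=[d+1]\cup\{s+d\}\setminus\{2\}\in\Delta(\bar K')$. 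No homotopy computation, and no control of $\beta_{d-1}$, is needed.

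\textbf{Where your argument breaks.} First, route (a) is false: one can build an $(m,l,s)$-pedigree with $\beta_{d-1}(\bar K)>0$. For $d=2$, take the proper $5$-pedigree with root $\{1,2,3\}$ and successive outgoing labels $4,5,6,7,8$ along the branches $124\!\to\!125$ and $134\!\to\!137$; then make the leaf $\{3,4,7\}$ internal with outgoing label $6$. Since $6$ previously appears only in faces through $1,2,5$, the attaching locus of $\partial\{3,4,6,7\}$ inside $\bar K$ is the triangle $\{3,4,7\}$ together with the isolated vertex $\{6\}$, and Mayer--Vietoris gives $\beta_1(\bar K)=1$. So $\beta_{d-1}=0$ is not a feature of pedigrees.

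Second, even for proper pedigrees (where $\bar K\simeq\bigvee_s S^d$ and your homotopy claim holds), the final step cannot be completed from $\ff$-vector and Betti data alone. Take $d=2$, $s=3$: any proper $3$-pedigree has $\ff=(6,12,10)$ and $(\beta_0,\beta_1,\beta_2)=(1,0,3)$, and you want $\bv_0=\{1,3,6\}$. But the shifted $2$-complex whose maximal $2$-faces are $\{2,4,5\}$ and $\{1,2,6\}$ has exactly the same $\ff$-vector, has $b_1=0$ and $b_2=3$, and does \emph{not} contain $\{1,3,6\}$. Thus there exist shifted complexes with the same $\ff$-vector and Betti numbers as $\bar K$ that miss $\bv_0$; your proposed comparison of $\ff_{d-1}(\bar K)$ against ``shifted $(d-1)$-complexes with vanishing $b_{d-1}$'' therefore cannot force $\{3,\dots,d+1,s+d+1\}\in\Delta(\bar K)_{d-1}$. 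Some input beyond these invariants---such as the inductive/Nevo argument the paper uses---is required.
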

\begin{proof}
We will establish the claim for every $(m,l,s)$-pedigree $G$ with $s\geq 1$ by induction on the number of vertices $m+l$, with the base case of $m=1$ (the root) and $l=d+1$ (the case where $s=1$) containing $\bv_0$ as one of the leaves. Since $\bar K$ is in this case the boundary of a $(d+1)$-simplex, we have $\Delta(\bar K)=\bar K = \binom{[d+2]}{d+1}\ni\bv_0$.

For a general $(m,l,s)$-pedigree $G$ with $m+l>d+2$ (and $s\geq 2$), consider some vertex $\bu\in V(G)$ such that all of its $d+1$ children in $G$ are leaves (such a vertex exists in every finite DAG). Let $z$ be the outgoing label of $\bu$, let $G'$ be the DAG obtained by deleting $\bu$ and its $d+1$ children from $G$, and---denoting its vertex set by $K' \subseteq \binom{[s+d+1]}{d+1}$---let $\bar K'$ be the complex consisting of $K'$ and all of its subsets.

If the label $z$ still appears in $G'$ (i.e., $G'$ is an $(m-1,l-(d+1),s)$-pedigree), then $\bv_0\in \Delta(\bar K')$ by the induction hypothesis. In this case, the induction step is a direct consequence of the following monotonicity property of the shifted complex:
\begin{fact}\label{fact:Delta-mon}
Let $\bar K'\subseteq \bar K $ be such that $\ff_0(\bar K')=\ff_0(\bar K)$. Then $\Delta(\bar K')\subseteq \Delta(\bar K)$. 
\end{fact}
To verify this property, recall the algorithm for generating $\Delta(\bar K)$ by testing the appropriate columns of a $\ff_d(\bar K)\times \binom{\ff_0(\bar K)}{d+1}$ matrix $M$ (recall $\ff_d(\bar K) = |K|$) for linear dependence on their predecessors.
Having $\ff_0(\bar K')=\ff_0(\bar K)$ implies that the matrix $M'$ used to generate $\Delta(\bar K')$ in this manner is the submatrix of $M$ corresponding to a $|K'|$-subset of its rows; thus, if $f\notin \Delta(\bar K)$, then its corresponding column in $M$ is linearly dependent on its predecessors, so this also holds for $M'$, and $f\notin \Delta(\bar K')$.

It remains to treat the case where the label $z$ does not appear in $G'$. By permuting the labels in $G$ we may assume without loss of generality that $z= s+d+1$. Since $G'$ has $s-1\geq 1$ distinct labels, our induction then asserts that $\bv_0' \in \Delta(\bar K')$ for $\bv_0' = [d+1]\cup\{s+d\}\setminus\{2\}$.

Write $\bu=\{x_1<\ldots<x_{d+1}\}$, and define
\[ L = \{\bu\} \cup \left\{\bu\cup \{z \}\setminus\{ x_i\} \right\}\,, \]
so that $K = K' \cup L$. As usual, write $\bar L$ for the complex obtained by the closure of $L$ w.r.t.\ subsets. We will appeal to a useful characterization due to Nevo~\cite{Nevo05} for the faces of the algebraic shift of a union of two complexes:
\begin{theorem}[{\cite[Thm~4.6]{Nevo05}}]\label{thm:nevo}
Let $\Gamma_1,\Gamma_2$ be two $d$-dimensional simplicial complexes with $\ff_d(\Gamma_1\cap \Gamma_2) = 1$ (i.e., their intersection contains a single $d$-dimensional face). Then for every $f=\{y_1<\ldots<y_{d+1}\}\subseteq (\Gamma_1\cup\Gamma_2)_0$,
\[ f\in \Delta(\Gamma_1\cup \Gamma_2)\quad\Longleftrightarrow\quad y_{d+1}-y_d \leq \cD_{\Gamma_1}(f)+\cD_{\Gamma_2}(f)-D_{\Gamma_1\cap\Gamma_2}(f)\,,
\]
in which $\cD_\Gamma(f) = \#\left\{ w\,:\; \{y_1<\ldots<y_d<w\}\in \Delta(\Gamma)\right\}$.
\end{theorem}
In our application, let $\Gamma_1$ and $\Gamma_2$ correspond to $\bar K'$ and $\bar L$, respectively, noting that $K' \cap L = \{ \bu\}$ by the assumption that $z$ does not appear in $G'$. Further take $f=\bv_0$, whence $y_{d+1}-y_d = s$, and $\cD_\Gamma(\bv_0)$ counts the number of faces of $\Gamma$ of the form $[d+1]\cup\{w\}\setminus \{2\}$ for some $w>d+1$. Then
\begin{enumerate}[(i)]
    \item $\cD_{\bar K'}(\bv_0) = s-1$ (corresponding to $w=d+1+i$ for $ i \in [s-1]$, as $\bv_0'\in \Delta(\bar K')$);
    \item $\cD_{\bar L}(\bv_0) = 1$ (since $\Delta(\bar L) $ is the complete simplicial complex $\binom{[d+2]}{d+1}$);
    \item $\cD_{\bar K'\cap \bar L}(\bv_0)=0$ (since the shifting of a single $d$-face $\bu$ is $[d+1])$).
\end{enumerate}
We see that $\cD_{\bar K'}(\bv_0)+\cD_{\bar L}(\bv_0)-D_{\bar K'\cap \bar L}(\bv_0) = s = y_{d+1}-y_d$, and thus $\bv_0\in\Delta(\bar K)$ by the above theorem, as required.
\end{proof}

\begin{proof}[Proof of Theorem~\ref{thm:a-b}] Assume $s\geq 1$ (otherwise $l=1$ and $\aG=\bG=0$), and that without loss of generality the outgoing labels in $G$ are taken from $[s+d+1]$ (recalling that $\aG$ and $\bG$ are invariant under replacing every occurrence of an outgoing label $z$ by a previously unused $z'$).
Let $\bar K$ denote the complex containing the vertices of $G$ and all of their subsets, and
consider the Betti number $\beta_d:=\beta_d(\bar K) = \beta_d(\Delta(\bar K))$. 

On one hand, we claim that $\beta_d \geq m$. This follows from the basic fact that if $\bu$ is an internal vertex in $G$ with children $\bv_1,\ldots,\bv_{d+1}$, then $\partial_d \bu$ is a linear combination of $\{\partial_d \bv_i\,:\; i\in[d+1]\}$. To see this, consider an internal vertex $\bu=\{x_1<\ldots<x_{d+1}\}$, let $z$ be the outgoing label of $\bu$, and suppose that $k$ is the index such that $x_{k-1}<z<x_k$ (setting $k=0$ if $z<x_1$ and $k=d+2$ if $z>x_{d+1}$). 
 If $\bw := \{x_1<\ldots<x_{k-1}<z<x_k<\ldots<x_{d+1}\}$ then the well-known property $\partial_d\partial_{d+1}=0$ produces the sought linear combination, since 
\[ 0 = \partial_d\partial_{d+1} \bw = \sum_{i=1}^{k-1} (-1)^i \partial_d \bv_i
+ (-1)^k \partial_d \bu + \sum_{i=k}^{d+1} (-1)^{i+1} \partial_d \bv_i\,.\]
With this fact in hand, one can construct the matrix corresponding to $\partial_d$ as follows: begin with the columns $\partial_d \bv_1,\ldots,\partial_d \bv_l$ where $\bv_1,\ldots,\bv_l$ are the leaves of $G$; then repeatedly add the column $\partial_d \bu$ for an internal vertex $\bu$ all of whose children were all added. As each such iteration adds a column which is a linear combination of its predecessors, we deduce that
\begin{equation}\label{eq:beta-2-lower}
\beta_d = \dim \ker(\partial_d) \geq m\,.\end{equation}
(Note that this inequality may be strict: see Figure~\ref{fig:beta_d-m}.)

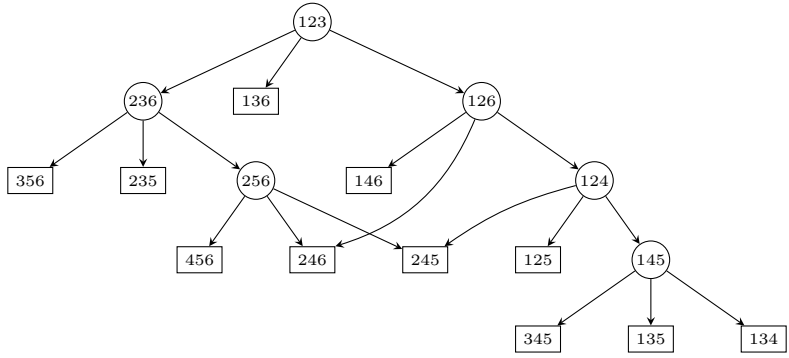
\begin{figure}
    \centering
    \begin{tikzpicture}[every node/.style={draw, circle, font=\tiny, inner sep=1pt}, 
    edge from parent/.style={draw, -stealth}, 
    level distance = 30pt]
    \node{123}
        child {node {236} 
            child {node[rectangle, inner sep=3pt] {356}}
            child {node[rectangle, inner sep=3pt] {235}}
            child {node {256}
                child[missing]
                child {node[rectangle, inner sep=3pt] {456}}
                child {node[rectangle, inner sep=3pt] (v246) {246}}
                child {node[rectangle, inner sep=3pt] (v245) {245}}
            }
        }
        child {node[rectangle, inner sep=3pt] {136}}
        child [missing]
        child {node (v126) {126} 
            child {node[rectangle, inner sep=3pt] {146}}
            child[missing]
            child {node (v124) {124}
                child {node[rectangle, inner sep=3pt] {125}}
                child {node {145}
                    child {node[rectangle, inner sep=3pt] {345}}
                    child {node[rectangle, inner sep=3pt] {135}}
                    child {node[rectangle, inner sep=3pt] {134}}
                }
            }
        }
        ;
    \path[-stealth] (v124) edge [bend right=10] (v245);
    \path[-stealth] (v126) edge [bend left=25] (v246.north east);
    \end{tikzpicture}
    \caption{A $(6,11,3)$-pedigree $G$ for $d=2$ whose simplicial complex has $\beta_2 > m$: as the dimension of the leaves is at most $\binom{s+d}2$, in this case it is at most $10 < l$, not linearly independent (here $\beta_2=7$ vs. $m=6$).}
    \label{fig:beta_d-m}
\end{figure}
On the other hand, an upper bound on $\beta_d$ can be derived from its characterization 
\[ \beta_d = b_d(\Delta(\bar K)) = |L|\quad\mbox{where}\quad L := \{f\in\Delta(\bar K) \,:\; |f|=d+1\,,\,1\notin f\}\,.\] 
Recalling that  $\bv_0\in\Delta(\bar K)$ by Claim~\ref{clm:Delta(K)},
define
\begin{align*}
F &:= \big\{ f\in\tbinom{[s+d+1]}{d+1}\,:\; f \leqP \bv_0\,,\, 1\in f \big\} \subseteq \Delta(\bar K)\qquad\mbox{and}\\ 
\widehat F  &:= \big\{ f\in \Delta(\bar K)\,:\; |f|=d+1\,,\,1\in f \big\}\setminus F\,,\end{align*}
where $F\subseteq \Delta(\bar K)$ as per the definition of a shifted complex containing~$\bv_0$, and set
\[ \aGhat := |\widehat F| = \ff_d(\Delta(\bar K))-\beta_d - |F|\,.\]
We readily see that 
\[ F =  \left\{ [d+1]\right\} \cup \left\{ [d+1]\cup\{x+d+1\}\setminus\{y\} \,: x\in[s]\,,\,y=2,\ldots,d+1\right\}\,,\]
so that \[|F|=ds+1\]
and substituting this (as well as $|\Delta(\bar K)|=m+l$) in the expression for $\aGhat$ we get
\begin{equation}\label{eq:a-a'}
\aGhat = l - (ds+1) - (\beta_d-m) = \aG - (\beta_d-m) \leq \aG,,
\end{equation}
with the last inequality using~\eqref{eq:beta-2-lower}. In particular, $\aG \geq |\widehat F|\geq 0$.

We next aim to bound $|L|$ in terms of $\aGhat$. Write $L$ as the disjoint union of
\begin{align*}
    L_1 &= \left\{ \{x_1<\ldots<x_{d+1}\}\in L \,:\; x_d\leq d+1\right\}\,,\\
    L_2 &= \left\{ \{x_1<\ldots<x_{d+1}\}\in L \,:\; x_d> d+1\,, x_1 \leq d+1\right\}\,,\\
    L_3 &= \left\{ \{x_1<\ldots<x_{d+1}\}\in L \,:\; x_1\geq d+2\right\}\,.
\end{align*}
Recalling $1\notin f$ for $f\in L_1$, clearly $L_1$ is $\{ (2,\ldots,d+1,x+d+1) \,:\; x\in[s]\}$, and so
\[ |L_1| = s\,.\]
To treat $L_2$, consider the map $T$ which replaces the smallest element of $f\in L_2$ by $1$. Since every $f\in L_2$ has at least $2$ elements that exceed $d+1$, we see that~$T f \nleqP \bv_0$, whence $T L_2 \subseteq \widehat F$. Moreover, every element of $\widehat F$ has at most $d$ preimages of $T$ in $L_2$ thanks the requirement $2 \leq x_1\leq d+1$, hence
\[ |L_2| \leq d \,\aGhat\,.\]
Lastly, to handle $L_3$, set 
\[ \widehat F' = \{ f \setminus \{1\} \,:\; f \in \widehat F\}\,.\] 
For every $f=\{x_1,\ldots,x_{d+1}\}\in L_3$ and every $i$, we have that 
\[ f\setminus \{x_i\} \in \widehat F'\,,\] 
since it has $d$ elements (and in particular, at least $2$ elements) that exceed $d+1$ by the definition of $L_3$, whence $f\cup\{1\}\setminus \{x_i\}\nleqP \bv_0$.
Applying the Kruskal--Katona theorem in its simpler form due to Lov\'asz (see~\cite[p.~95, Ex.~13.31(b)]{Lovasz07}) we find that, if $|L_3| = \binom{\theta}{d+1}$ for $\theta\in\R_+$ then $\binom{\theta}{d} \leq |\widehat F'| \leq \aGhat$ (and $|L_3|=\binom{\theta}d \frac{\theta-d}{d+1}$), whence
\[ |L_3| \leq \aGhat \frac{(d! \aGhat)^{1/d}}{d+1} \leq  (\aGhat)^{(d+1)/d} \,.\]
Combining these estimates shows that $\beta_d=|L|$ satisfies
\begin{equation}\label{eq:beta-2-upper}
\beta_d \leq s + d \aGhat + (\aGhat)^{(d+1)/d)}\,.
\end{equation}
Using this estimate we may now conclude the proof. 
To bound $\bG = d m - (l-1) $ from above we can increase $m$ to $\beta_d$ (with~\eqref{eq:beta-2-lower} in mind) and get
\begin{align*}
     \bG &\leq d \beta_d - (l-1) = d (\beta_d - s) - (l - (ds+1)) \leq d \aG^{(d+1)/d} + (d^2-1) \aG\,,
\end{align*}
where the last inequality used~\eqref{eq:beta-2-upper} and the fact that $\aGhat \leq \aG$.
\end{proof}

\subsection{Proof of Theorem~\ref{thm:subcrit-main}}

Let $\cE_{m,l,s}$ be the event that $Y \sim \cY_d(n,p)$ admits a stacked contraction of $\bo=\{1,2,\ldots,d+1\}$ corresponding to some $(m,l,s)$-pedigree, so that $\cE_s^\bo = \bigcup_{m,l,s} \cE_{m,l,s}$.

Define $\aG,\bG$ as~\eqref{eq:a-def}--\eqref{eq:b-def}, and note that if $\aG=0$ then also $\bG=0$ by Theorem~\ref{thm:a-b} (while this follows from~\eqref{eq:a-b-bound}, it can be readily proved via an elementary argument examining the pedigree under consideration; we will need the delicate bound~\eqref{eq:a-b-bound} later in order to treat $\bG>0$). In particular, in this case $m=s$ and $l=ds+1$, and the $(m,l,s)$-pedigrees are precisely the set of $(d+1)$-ary trees with $s$ internal vertices. As the number of such trees is $\sC_d(s) \leq \alpha_d^s$, we thus have the upper bound 
\[ \P\left(\cE_{s,ds+1,s}\right) \leq \binom{n}{s} s! \, \sC_d(s) p^{ds+1} \leq (1-\delta)^{ds} p \leq e^{-\delta d s} p\,.
\]
Since $\aG \geq 0$ holds for any $(m,l,s)$-pedigree as per (the easy part of) Theorem~\ref{thm:a-b}, it remains to treat $\aG\geq 1$, which we henceforth assume, noting this implies $s\geq 2$.

Let us first narrow down the relevant values of the parameter $2 \leq s \leq (\log n)^{5/2}$ and $\aG = l-(ds+1)$ via a crude union bound that ignores the DAG structure (including its number of internal vertices) and merely looks at the probability that $Y$ contains $l$ faces using at most $s+d+1$ different labels:
\begin{align}\nonumber  \P\bigg(\bigcup_m\cE_{m,l,s}\bigg) &\leq \binom{n}{s} \binom{\binom{s+d+1}{d+1}}l p^l \leq 
\left( n^{\frac1d} p (s+d+1)^{d+1}\right)^{d s}\left( (s+d+1)^{d+1} p\right)^{\aG+1} \\
&\leq s^{c_d (s + \aG+1)} n^{-(\aG+1)/d} \leq s^{c_d s} n^{-(\frac1d-o(1))(\aG+1)}\,,
\label{eq:E(l,s)-crude}
\end{align}
for some $c_d>0$, where it was possible to increase $c (s+ c)^{c s}$ into $s^{c' s}$ having $s\geq 2$,
and we thereafter absorbed the term $s^{O(\aG)} = \exp[ O(\aG\log \log n)]$ into the $o(1)$-term.
If $s \leq \sqrt{\log n}$ then the right-hand of~\eqref{eq:E(l,s)-crude} is in this case at most
\[ \exp\left[ O(\sqrt{\log n} \log\log n) - (\aG+1)(\tfrac1d - o(1))\log n\right] \leq n^{-2/d+o(1)}\,,
\]
with the last step using the fact that $\aG\geq 1$. We may thus assume henceforth that
\begin{align}\label{eq:s-crude-bound} 
\sqrt{\log n} \leq s \leq (\log n)^{5/2}\,.
\end{align}
Reexamining~\eqref{eq:E(l,s)-crude} for this adjusted range of $s$, we see that if $\aG \geq s^{5/8}$ then
\[ s \log s \leq \aG s^{3/8} \log s \leq \aG(\log n)^{\frac{15}{16}+o(1)} = o(\aG\log n)\,,\]
so the right-hand of~\eqref{eq:E(l,s)-crude} is in this case at most $n^{-(1/d-o(1))\aG} \leq n^{-\sqrt{\log n}}$ for large enough $n$; thus, we can assume
\begin{equation}\label{eq:a-crude-bound} 
\aG \leq s^{5/8}\,.
\end{equation}
Recalling the definitions of $\aG,\bG$ from~\eqref{eq:a-def}--\eqref{eq:b-def}, we have $d (m - s) = \aG+\bG$, and so~\eqref{eq:a-b-bound} yields that, in the above range of $\aG$,
\[ 0 \leq m - s \leq d( \aG^{(d+1)/d} + \aG ) = O(\aG^{(d+1)/d}) = O(a^{3/2}) = O(s^{\frac{15}{16}}) = o(s)\]
and we deduce that 
\[ m = (1+o(1))s\,.\]
We now wish to bound $\P(\cE_{l,s})$ by enumerating over $(m,l,s)$-pedigrees. To this end, we argue that the number of unlabeled DAGs which can be obtained by erasing the vertex labels of an $(m,l,s)$-pedigree is at most
\[ ((d+1)m)^{2\bG}\sC_d(m)\,;\]
indeed, given a DAG that arises from an $(m,l,s)$-pedigree, one can repeatedly (for $\bG$ steps) take a tree excess edge and replace it by an out-edge to a new leaf, arriving at a $(d+1)$-ary tree $T$ with $m$ internal vertices (and $dm+1$ leaves). To recover the original DAG, one can repeatedly (for $\bG$ steps) choose a leaf (of those we had added,  identifying its parent as the source of an excess edge) and a vertex (the target of said edge), having 
$(dm+1)((d+1)m+1) \leq ((d+1)m)^2$ options per step (this inequality holds for all $m$ large enough); the above estimate now follows from the fact that the number of $(d+1)$-ary trees with $m$ internal vertices is $\sC_d(m)$.

For each unlabeled DAG as above, one can assign vertex labels from $\binom{[n]}{d+1}$ using a total of $s$ distinct outgoing labels, by selecting said $s$ labels from $[n]$, then choosing the $s$ internal vertices which will have those outgoing labels, and thereafter labeling the remaining $m-s$ vertices; carrying the labeling process in this way leverages the fact that $m-s=(\aG+\bG)/d$, which we have good control over.

Combining these two arguments, we conclude that
\begin{align*}
 \P(\cE_{m,l,s}) & \leq  \binom{n}{s} \binom{m}{s} s! s^{m-s} ((d+1)m)^{2\bG} \sC_d(m) p^l \\
    &\leq (\alpha_d n p^d)^{s} \binom{m}{m-s} ((d+1)m)^{2\bG} (\alpha_d s)^{(\aG+\bG)/d}p^{\aG+1}\,,
\end{align*}
where the last inequality used that $\sC_d(m) \leq \alpha_d^m 
$ and that $m=s+(\aG+\bG)/d$.
Substituting the value of $p$, we have $\alpha_d n p^d = (1-\delta)^d$, 
and using $\binom{m}{m-s} \leq m^{(\aG+\bG)/d}$ we further find that
\[
\P(\cE_{m,l,s}) \leq \frac{(1-\delta)^{ds}}{(\alpha_d n)^{1/d}} \Big(\frac {m s} n\Big)^{\aG/d} 
\Big(  \alpha_d \, (d+1)^{2d} m^{2d+1}s  \Big)^{\bG/d}\,.
\]
In what is our central application of Theorem~\ref{thm:a-b}, we can next bound $\bG$ in terms of $\aG$ and find that
\[
\bG \leq d \aG^{(d+1)/d} + d ^2 \aG \leq 2d^2 \aG s^{5/16}  \leq \aG s^{1/3}\,,\]
and plugging this in the last bound on $\P(\cE_{m,l,s})$, as well as $\alpha_d = \frac{(d+1)^{d+1}}{d^d} \leq e (d+1)$ for all $d\geq 2$, we get that
\begin{align*}
    \P(\cE_{m,l,s}) &\leq 
    \frac{(1-\delta)^{ds}}{(\alpha_d n)^{1/d}}
    \bigg[\frac{m s}n \Big( e (d+1)^{2d+1}  m^{2d+1}s\Big)^{ s^{1/3}}\bigg]^{\aG/d} \\
    &\leq \frac{(1-\delta)^{ds}}{(\alpha_d n)^{1/d}}
    \bigg[ n^{-1} \exp\left(\left(2(d+1) + o(1)\right)s^{1/3}\log s\right) \bigg]^{\aG/d} \\
    &\leq e^{-\delta d s} n^{(-\frac1d + o(1))(\aG+1)}\,,
\end{align*} 
with the last equality following from the fact that
\[ s^{1/3}\log s \leq (\log n)^{5/6 + o(1)} = o(\log n)\,.\]
We conclude with a union bound over $\bigcup_{m,l}\cE_{m,l,s}$ (going over $l$ via $\aG = l-(ds+1)$).
The probability of this union of events for a given $\aG$ and all $m \leq(1+o(1) s = n^{o(1)}$ is at most $e^{-\delta d s}n^{(-1/d+o(1))(\aG+1)}$; summing this over all $\aG\geq 1$ concludes the proof.
\qed

\section{The supercritical regime}\label{sec:supercritical}

We establish the supercritical regime in Theorem~\ref{thm:1} by proving that a stronger property occurs with high probability. Namely, that $\cY_d(n,p)$ admits a so-called balanced stacked triangulation for every $f\in\binom{[n]}{d+1}$. To be consistent with the previous section, we state this result in the language of pedigrees.

\begin{definition}
A {\em proper} $s$-pedigree is an $(m,l,s)$-pedigree $T$ in which all the outgoing labels of its internal vertices are distinct, and differ from those on the root of $T$. In such a case $m=s$, $T$ is a tree and we denote the set consisting of its $l=ds+1$ leaves by $\sL(T)$.

Given a vertex $\bv$ of $T$, let the subtree $T_{\bv}$ of $T$ rooted at $T$ be the subgraph induced by $\bv$ and all its descendants. If the numbers of leaves in each of the $d+1$ subtrees of $T$ rooted at the children of $T$'s root are equal, $T$ is called {\em balanced}. In such a case $s=(d+1)s'+1$ for some $s'$, and each such subtree has $ds'+1$ leaves.
\end{definition}

\begin{theorem}\label{thm:supcrit}
Fix $d\ge 2$ and let $\delta = 1/(18(d+1)^2)$, $\epsilon=n^{-\delta}$, $p=(1+\epsilon)(\alpha_d n)^{-1/d}$, $s'=n^{2\delta}$ and $s=(d+1)s'+1$. Let $\cA_s$ denote the event that there is a balanced proper $s$-pedigree for $\{1,\ldots,d+1\}$ whose leaves are all members of $Y\sim \cY_d(n,p)$. Then, for every sufficiently large $n$,
\[
\P(\cA_s) \ge 1 - \exp{\left(-n^{12\delta}\right)}\,.
\]
\end{theorem}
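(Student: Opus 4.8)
The plan is to run a restricted second-moment (Janson) argument on a count of sub-pedigrees engineered to avoid a clustering degeneracy.

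\textbf{The random variable.} Let $z$ subdivide $\bo$ into faces $f^{(1)},\dots,f^{(d+1)}$. Rather than the naive count of all balanced proper $s$-pedigrees for $\bo$ (which fails, see below), let $X$ be the number of ways to choose $z\in[n]\setminus[d+1]$ together with proper $s'$-pedigrees $P_1,\dots,P_d$ rooted at $f^{(1)},\dots,f^{(d)}$, with pairwise disjoint outgoing labels (disjoint also from $z$ and $[d+1]$), such that all $d(ds'+1)$ leaves of $P_1,\dots,P_d$ lie in $Y$ and, in addition, $f^{(d+1)}$ admits at least one proper $s'$-pedigree with outgoing labels disjoint from everything above and with all leaves in $Y$. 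Then $X>0$ implies $\cA_s$. The point of singling out $f^{(d+1)}$ is that two distinct counted configurations can never agree on all of their sub-pedigrees; in the naive count, exponentially many ($\exp(\Theta(n^{2\delta}))$) balanced proper $s$-pedigrees share the same $d$ of their $d+1$ subtrees, and such a pair carries an overlap of order $n^{\Theta(d\delta)}\to\infty$, which is fatal for any second moment.

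\textbf{First moment and the supercritical input.} By the Fuss--Catalan asymptotics $\sC_d(s')\asymp (s')^{-3/2}\alpha_d^{s'}$, the number of admissible $(z,P_1,\dots,P_d)$ is $\asymp n\,\sC_d(s')^d n^{ds'}$, so $\E X\asymp n\,\sC_d(s')^d n^{ds'}\,p^{d(ds'+1)}\,q$, where $q$ is the probability that the last slot $f^{(d+1)}$ is fillable. Substituting $p=(1+\epsilon)(\alpha_d n)^{-1/d}$, all powers of $n$ and $\alpha_d$ cancel and one is left with $\E X = n^{-O(\delta)}(1+\epsilon)^{\Theta(s')}q = n^{-O(\delta)}\exp(\Theta(n^{\delta}))\,q$, since $\epsilon s'\asymp n^{\delta}\gg\log n$. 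The quantity $q$ must be bounded below by $n^{-o(1)}$ (in fact $q\to1$): this is precisely where supercriticality enters. Heuristically the density $\theta_t$ of faces with a proper $t$-pedigree satisfies $\theta_t\approx p+n\theta_{t-1}^{\,d+1}$, whose fixed-point equation is $Q_{1+\epsilon}=0$; since $1+\epsilon>\alpha_d^{1/d}$ this polynomial has no positive root, so over the $\Theta(\log n)$ scales from a single face up to size $s'$ the density escapes the subcritical equilibrium. A rigorous version of this step uses the same second-moment machinery one size-level down and is essentially the supercritical counterpart of the analysis behind Theorem~\ref{thm:density}. Hence $\E X\ge\exp(n^{\delta})$ for all large $n$.

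\textbf{Second moment.} Write $\E[X^2]\le\E[X]^2+\Delta$, where $\Delta$ collects ordered pairs of configurations $\big((z,P_1,\dots,P_d),(z',P_1',\dots,P_d')\big)$ whose indicators are not independent. If $z\neq z'$ the configurations share no subtree and the only dependence is two otherwise unrelated proper $s'$-pedigrees happening to share a leaf, which is rare; if $z=z'$ and they agree on exactly $k\le d-1$ of the $d$ subtrees, they share $k(ds'+1)$ leaves, the number of completions of the second configuration is $\asymp(\sC_d(s')n^{s'})^{d-k}$, and -- crucially -- the last-slot event, of probability $q$ and essentially independent of the rest, is common to both. Weighing these factors against $\E[X]^2$, the sharing-$k$ contribution is a power of $n$ whose exponent is tightest at $k=d-1$, namely $n^{-1/d+3(d-1)\delta}/q$. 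The balance condition is what makes each subtree contribute only $ds'+1$ leaves (hence the benign $n^{-1/d}$), and $\delta=\tfrac1{18(d+1)^2}$ is chosen exactly so that $3(d-1)\delta<1/d$ -- equivalently $d(d-1)<6(d+1)^2$, true for every $d\ge2$ with room to spare -- so this exponent is strictly negative, and in fact exceeds $12\delta$ in absolute value. Together with $q\ge n^{-o(1)}$ this gives $\Delta\le n^{-\rho}\E[X]^2$ for some $\rho=\rho(d)>12\delta$.

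\textbf{Conclusion and main obstacle.} Since $\E X=\exp(\Theta(n^{\delta}))$ is super-polynomial while $\Delta/\E[X]^2=n^{-\rho}$, we have $\Delta\gg\E X$, so Janson's inequality in its $\Delta\ge\mu$ form gives $\P(X=0)\le\exp\!\big(-(\E X)^2/(2\Delta)\big)=\exp(-\Theta(n^{\rho}))\le\exp(-n^{12\delta})$ for large $n$, which is the assertion. I expect the bulk of the effort to go into (i) the full system of overlap estimates feeding $\Delta$ -- enumerating every way two counted configurations can be correlated and checking each exponent stays below $-12\delta$ with the given $\delta$ -- and (ii) the lower bound $q\ge n^{-o(1)}$, i.e.\ rigorously propagating the supercritical density up the $\Theta(\log n)$ scales; these are the delicate parts.
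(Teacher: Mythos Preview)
Your central premise---that the naive second-moment over balanced proper $s$-pedigrees fails---is incorrect, and the workaround you build around it creates real gaps. Take the case you single out: pairs $L_1,L_2\in\cL$ sharing exactly $d$ of their $d+1$ subtrees. Here $|P|=m=d(ds'+1)$ and $r=ds'+1$, so $m/d-r=0$; but because the pedigrees are \emph{balanced}, the shared block is forced to be large, $m\ge d(ds'+1)$, and the contribution of such pairs to $\rho/\mu^2$ is bounded by $(s')^{O(1)}(1+\epsilon)^{-m}\le\exp\big(-(d^2+o(1))n^{\delta}\big)$, which is tiny rather than fatal. This is precisely Case~(i) in the paper's proof: the balance hypothesis alone handles it, and there is no need to single out $f^{(d+1)}$.

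Your detour then introduces two genuine problems. First, the lower bound $q\ge n^{-o(1)}$ on the probability that $f^{(d+1)}$ admits a proper $s'$-pedigree with fresh labels is never proved; calling it ``essentially the supercritical counterpart of Theorem~\ref{thm:density}'' is close to circular, since that is the very statement you are trying to establish (for a slightly smaller pedigree). Second, your random variable $X$ is not a sum of products of independent indicators---each summand carries the fillability event for $f^{(d+1)}$, itself a union over exponentially many pedigrees---so Janson's inequality does not apply to it in the form you invoke. Finally, your overlap calculus (``agree on exactly $k$ subtrees'') only covers intersections that are unions of whole subtrees; two proper pedigrees can share leaves without sharing any full subtree, and these partial overlaps are exactly what the paper controls via Lemma~\ref{lem:boston} and the encircled/boundary enumeration of Lemma~\ref{lem:partial_enum}. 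You wave these off as ``rare'' but give no argument; in the actual proof they furnish the dominant term $n^{-\frac{2d+3}{3d(d+1)^2}+o(1)}$ in $\rho/\mu^2$.
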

The upper bound in our main theorem will follow immediately from the above theorem by taking a union bound over the $\binom n{d+1}$ possible roots.
\subsection{Combinatorics of proper pedigrees} 
We begin the proof with some combinatorial properties of proper pedigrees. The following lemma shows that, for any proper pedigree of $\bo$ and any prescribed subset $P$ of its faces, there exists a proper pedigree of $\bo$ that contains this subset $P$ and no additional labels (see Figure~\ref{fig:bos}). This fact will play a key role in characterizing (and enumerating) the pairs of proper pedigrees with a given intersection, as part of the second moment analysis.

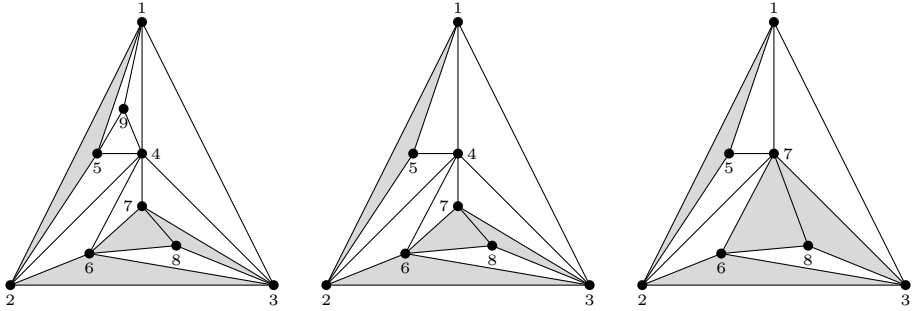
\begin{figure}
    \centering
    \begin{tikzpicture}
    \newcommand*{\tricoords}{%
    \coordinate (v1) at (0.5,1);
    \coordinate (v2) at (0,0);
    \coordinate (v3) at (1,0);
    \coordinate (v4) at (0.5,0.5);
    \coordinate (v5) at (0.33,0.5);
    \coordinate (v6) at (0.3, 0.12);
    \coordinate (v7) at (0.5, 0.3);
    \coordinate (v8) at (0.63, 0.15);
    \coordinate (v9) at (0.43,0.67);
    }
    
    \begin{scope}[scale=3.5]
    \tricoords
    \fill[color=gray!30] (v1)--(v5)--(v2)--cycle;
    \fill[color=gray!30] (v3)--(v2)--(v6)--cycle;
    \fill[color=gray!30] (v6)--(v7)--(v8)--cycle;
    \fill[color=gray!30] (v3)--(v7)--(v8)--cycle;
    \draw (v1)--(v3)--(v2)--cycle;
    \draw (v1)--(v4)--(v3);
    \draw (v2)--(v4);
    \draw (v1)--(v5)--(v2);
    \draw (v4)--(v5);
    \draw (v1)--(v9)--(v4);
    \draw (v5)--(v9);
    \draw (v4)--(v6)--(v2);
    \draw (v3)--(v6);
    \draw (v4)--(v7)--(v6);
    \draw (v3)--(v7);
    \draw (v6)--(v8)--(v7);
    \draw (v3)--(v8);
    \foreach \i in {1} {
    \node[circle,scale=0.4,fill=black,label={[label distance=-2pt]{\tiny$\i$}}] at (v\i) {};
    }
    \foreach \i in {2,3,5,6,8,9} {
    \node[circle,scale=0.4,fill=black,label={[label distance=-2pt]below:{\tiny$\i$}}] at (v\i) {};
    }
    \foreach \i in {4} {
    \node[circle,scale=0.4,fill=black,label={[label distance=-2pt]right:{\tiny$\i$}}] at (v\i) {};
    }
    \foreach \i in {7} {
    \node[circle,scale=0.4,fill=black,label={[label distance=-2pt]left:{\tiny$\i$}}] at (v\i) {};
    }
    \end{scope}
    
    \begin{scope}[scale=3.5,shift={(1.2,0)}]
        \tricoords
    \fill[color=gray!30] (v1)--(v5)--(v2)--cycle;
    \fill[color=gray!30] (v3)--(v2)--(v6)--cycle;
    \fill[color=gray!30] (v6)--(v7)--(v8)--cycle;
    \fill[color=gray!30] (v3)--(v7)--(v8)--cycle;
    \draw (v1)--(v3)--(v2)--cycle;
    \draw (v1)--(v4)--(v3);
    \draw (v2)--(v4);
    \draw (v1)--(v5)--(v2);
    \draw (v4)--(v5);
    \draw (v4)--(v6)--(v2);
    \draw (v3)--(v6);
    \draw (v4)--(v7)--(v6);
    \draw (v3)--(v7);
    \draw (v6)--(v8)--(v7);
    \draw (v3)--(v8);
    \foreach \i in {1} {
    \node[circle,scale=0.4,fill=black,label={[label distance=-2pt]{\tiny$\i$}}] at (v\i) {};
    }
    \foreach \i in {2,3,5,6,8} {
    \node[circle,scale=0.4,fill=black,label={[label distance=-2pt]below:{\tiny$\i$}}] at (v\i) {};
    }
    \foreach \i in {4} {
    \node[circle,scale=0.4,fill=black,label={[label distance=-2pt]right:{\tiny$\i$}}] at (v\i) {};
    }
    \foreach \i in {7} {
    \node[circle,scale=0.4,fill=black,label={[label distance=-2pt]left:{\tiny$\i$}}] at (v\i) {};
    }
    \end{scope}
    
    \begin{scope}[scale=3.5,shift={(2.4,0)}]
        \tricoords
    \fill[color=gray!30] (v1)--(v5)--(v2)--cycle;
    \fill[color=gray!30] (v3)--(v2)--(v6)--cycle;
    \fill[color=gray!30] (v6)--(v4)--(v8)--cycle;
    \fill[color=gray!30] (v3)--(v4)--(v8)--cycle;
    \draw (v1)--(v3)--(v2)--cycle;
    \draw (v1)--(v4)--(v3);
    \draw (v2)--(v4);
    \draw (v1)--(v5)--(v2);
    \draw (v4)--(v5);
    \draw (v4)--(v6)--(v2);
    \draw (v3)--(v6);
    \draw (v6)--(v8)--(v4);
    \draw (v3)--(v8);
    \foreach \i in {1} {
    \node[circle,scale=0.4,fill=black,label={[label distance=-2pt]{\tiny$\i$}}] at (v\i) {};
    }
    \foreach \i in {2,3,5,6,8} {
    \node[circle,scale=0.4,fill=black,label={[label distance=-2pt]below:{\tiny$\i$}}] at (v\i) {};
    }
    \foreach \i / \l in {4/7} {
    \node[circle,scale=0.4,fill=black,label={[label distance=-2pt]right:{\tiny$\l$}}] at (v\i) {};
    }
    \end{scope}
    
    \end{tikzpicture}
    \caption{An illustration of Lemma~\ref{lem:boston}: given a subset $P$ of the faces of a proper stacked triangulation of $\bo=\{1,2,3\}$ (shaded faces on left picture), there exists such a triangulation of $\bo$ containing these faces and no other labels (on right). In this example, to generate the latter,  one first deletes the vertex $9$ (middle picture), then contracts the vertices~$4,7$.}
    \label{fig:bos}
\end{figure}

\begin{lemma}\label{lem:boston}
Let $T$ be a proper $s$-pedigree for $\bo=\{x_1,\ldots,x_{d+1}\}$, $P\subseteq \sL(T)$, $R=\bigcup_{\bv\in P}\bv\setminus\bo$ and $r=|R|$. Then,
\begin{enumerate}[(1)]
    \item There exists a proper $r$-pedigree $H=H(P,T)$ for $\bo$ such that $P\subseteq \sL(H)$. Furthermore, $P=\sL(H)$ if and only if $P=\sL(T)$.
    \label{lem_item:bos_a}
    \item If $P\subsetneq \sL(T)$ then $|P|\le dr$, and equality holds if and only if $P=\sL(T)\setminus \sL(T_\bv)$ for some vertex $\bv$ of $T$.
    \label{lem_item:bos_b}
\end{enumerate}

\end{lemma}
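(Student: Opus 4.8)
The plan is to prove both parts by induction on the number $|\sL(T)|=ds+1$ of leaves of $T$, using the two surgeries visible in Figure~\ref{fig:bos}: an \emph{un-subdivision}, which deletes the $d+1$ sibling leaves of a vertex all of whose children are leaves (so that this vertex becomes a leaf and its outgoing label disappears); and a \emph{contraction}, which takes an internal vertex $\bu$ with outgoing label $z$ subdividing it into $\bv_1,\dots,\bv_{d+1}$, fixes an index $i$, relabels $z\mapsto x_i$ (the $i$-th vertex of $\bu$) throughout the subtree $T_{\bv_i}$, and discards $T_{\bv_j}$ for $j\ne i$. The first move always removes a leaf-set from $P$; the key point about contraction is that, since the relabelling touches only the coordinate $z$, it preserves exactly those faces that avoid $z$.

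For part~\eqref{lem_item:bos_a}, if $P=\sL(T)$ we take $H=T$ and both assertions hold, so assume $P\subsetneq\sL(T)$. Let $z_0$ be the outgoing label of the root $\bo$, let $C_1,\dots,C_{d+1}$ be its children with corresponding subtrees $T^{(1)},\dots,T^{(d+1)}$, and put $P^{(i)}=P\cap\sL(T^{(i)})$. Each interior label of $T$ appears in only one branch, so $R$ is the disjoint union of $\{z_0\}$ (if $z_0\in R$) with the contributions of the branches, and $P=\bigsqcup_i P^{(i)}$. If $z_0\in R$, keep $z_0$ as the first subdivision of $H$, apply the induction hypothesis to each $(T^{(i)},P^{(i)})$ to obtain a proper pedigree $H^{(i)}$ of $C_i$ containing $P^{(i)}$ and using no interior label outside that branch's contribution, and glue the $H^{(i)}$ onto $C_1,\dots,C_{d+1}$; one then checks the outgoing labels stay distinct, that the total number of interior labels is $r$, that $P\subseteq\sL(H)$, and that $P=\sL(H)\iff\forall i\,(P^{(i)}=\sL(T^{(i)}))\iff P=\sL(T)$, invoking the inductive ``furthermore'' branchwise.

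The substantive case is $z_0\notin R$: then no face of $P$ contains $z_0$, and since some leaf of $T$ does contain $z_0$ we automatically have $P\subsetneq\sL(T)$ and must build an $H$ with fewer interior labels. If $P=\emptyset$ then $R=\emptyset$ and the trivial pedigree works. Otherwise, relabelling $z_0\mapsto x_i$ throughout $T^{(i)}$ yields a proper pedigree $\widetilde T^{(i)}$ of $\bo$ with fewer internal vertices, and since $P^{(i)}$ avoids $z_0$ it is unchanged, so $P^{(i)}\subseteq\sL(\widetilde T^{(i)})$; if $P$ meets only one branch $i$ we finish by the induction hypothesis applied to $(\widetilde T^{(i)},P)$. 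The hard case is when $P$ meets at least two branches, where the $\widetilde T^{(i)}$ must be \emph{spliced} into one proper pedigree of $\bo$ containing all of $P$ and using only the labels in $\bo\cup R$. I would handle this by a secondary induction on the number of branches met by $P$: choose $w\in R$ whose subdividing vertex in $T$ is closest to the root (so that every interior-label coordinate of that vertex lies outside $R$), make $w$ the first subdivision of $H$, route the faces of each $P^{(i)}$ into the child $\bo\setminus\{x_i\}\cup\{w\}$, and recurse. Proving that this routing is always consistent — that no face of $P$ is forced into two children at once — and that it leaves a strictly smaller sub-problem in each child is the crux of the lemma, and where I expect essentially all of the work to be.

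For part~\eqref{lem_item:bos_b}, the inequality follows at once from part~\eqref{lem_item:bos_a}: since $P\subsetneq\sL(T)$, the ``furthermore'' gives $P\subsetneq\sL(H)$, whence $|P|\le|\sL(H)|-1=d|R|=dr$. In the equality case, $\Leftarrow$ is a count: if $P=\sL(T)\setminus\sL(T_\bv)$ with $m_\bv$ the number of internal vertices of $T_\bv$, then the interior labels missing from $P$ are precisely those introduced inside $T_\bv$ (any such label sits only in descendants of a vertex of $T_\bv$, while any other label survives to a leaf outside $T_\bv$), so $r=s-m_\bv$ and $|P|=(ds+1)-(dm_\bv+1)=dr$. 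For $\Rightarrow$ I would induct on $s$ along the same branch decomposition: from $|P|=\sum_i|P^{(i)}|$, the inductive bounds $|P^{(i)}|\le d|R^{(i)}|$, sharpened to $|P^{(i)}|=d|R^{(i)}|+1$ when $P^{(i)}=\sL(T^{(i)})$, together with the facts that the branches cannot all be full (else $P=\sL(T)$) and that none is full when $z_0\notin R$, the arithmetic forces, whenever $|P|=dr$, exactly one branch to account for all of $\sL(T)\setminus P$ with equality holding inside it; the inductive characterization in that branch then produces the vertex $\bv$ with $\sL(T)\setminus P=\sL(T_\bv)$.
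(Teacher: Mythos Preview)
Your treatment of part~\eqref{lem_item:bos_b} matches the paper's: both derive $|P|\le dr$ from the ``furthermore'' clause of part~\eqref{lem_item:bos_a}, and both handle the equality characterization by the branch decomposition, with the arithmetic forcing all but one branch to be full. That part is fine.

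For part~\eqref{lem_item:bos_a}, however, there is a genuine gap in the hard case $z_0\notin R$ with $P$ meeting several branches. You acknowledge that the splicing step---choosing some $w\in R$ as the first subdivision of $H$ and routing each $P^{(i)}$ into the child $\bo\setminus\{x_i\}\cup\{w\}$---is ``the crux,'' but no mechanism is given for why such a routing can succeed. The label $w$ lies in exactly one branch of $T$, so for every other branch $i$ the faces of $P^{(i)}$ avoid $w$ entirely; nothing in your inductive hypothesis (which concerns pedigrees rooted at $\bo$, not at $\bo\setminus\{x_i\}\cup\{w\}$) lets you place those faces under that child using only labels from $R$. Absent a concrete construction here, the argument is incomplete.

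The paper avoids splicing altogether with a different idea. It first recurses on \emph{all} $d+1$ branches and assembles the resulting $H_j$'s under $\bo$, obtaining a proper pedigree $H'$ that already contains $P$ but has one extra internal label, namely $z_0$. It then eliminates $z_0$ by a single global relabelling: pick a branch with $P_j\neq\emptyset$, locate in $H_j$ the internal vertex $\bu\ni z_0$ farthest from the root (so $d$ of its $d{+}1$ children are leaves containing $z_0$, hence not in $P$), delete the descendants of $\bu$, replace $z_0$ by the outgoing label $y$ of $\bu$ throughout $H'$, and reattach the subtree below $\bu':=\bu\setminus\{z_0\}\cup\{y\}$. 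Since $y\in R$, this yields a proper $r$-pedigree containing $P$. The ``furthermore'' then follows by exhibiting a leaf of $H$ (coming from another branch) that is not in $P$. This relabelling trick is the missing ingredient in your proposal.
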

\begin{proof}
We start with the proof of item~\eqref{lem_item:bos_a}.
Let $z$ be the outgoing label in $T$ of the root $\bo$. For every $1\le j \le d+1$, the subtree $T_j$ of $T$ rooted at $\bv_j=\bo\setminus\{x_j\}\cup\{z\}$ is a proper $s_j$-pedigree for $\bv_j$, for some $s_j < s$. 
Let $P_j:=P\cap \sL(T_j)$, $R_j=\bigcup_{\bv\in P_j}\bv\setminus\bv_j$ and $r_j=|R_j|$.
By induction on $s$, we find $H_j=H(P_j,T_j)$ --- an $|R_j|$-pedigree for $\bv_j$ such that $P_j\subseteq \sL(H_j)$, and $P_j=\sL(H_j)$ if and only if $P_j=\sL(T_j)$. The label sets $R_1,\ldots,R_{d+1}$ are disjoint and contained in $R$. If $z\notin R$ then $R=\bigcup_jR_j$ and otherwise $R=\{z\}\cup\bigcup_jR_j$. In particular, $r=\one_{\{z\in R\}}+\sum_jr_j.$

Let $H'$ be the proper $(1+\sum_jr_j)$-pedigree  for $\bo$ obtained from $T$ by replacing each $T_j$ with $H_j$. 
If $z \in R$ it is straightforward to see that setting $H$ as $H'$ concludes the proof.
Otherwise, the label $z \notin R$ needs to be removed from $H'$. We further assume that $P\neq\emptyset$ or else we set $H$ as the isolated vertex $\bo$. Without loss of generality, suppose that $P_{d+1}\ne\emptyset$.

Let $\bu=\{y_1,\ldots,y_d,z\}$ be an internal vertex in $H_{d+1}$ containing $z$ of maximal distance from $\bo$. Such an internal vertex exists because $z\notin R$ whence the root $\bv_{d+1}$ of $H_{d+1}$ is an internal vertex containing $z$. Consequently, if we denote the outgoing label of $\bu$ by $y$, then all the vertices $\bu\setminus\{y_i\}\cup\{y\},~i=1,\ldots,d$ contain $z$ and of greater distance to $\bo$ than $\bu$ and, therefore, are leaves. Let $T_{\bu'}$ be the subtree of $H_{d+1}$ rooted at $\bu'=\{y_1,\ldots,y_d,y\}$.

We construct $H=H(P,G)$ from $H'$ by the following procedure. First remove all the descendants of $\bu$ from $H'$. Afterwards, replace every vertex $\bv$ in $H'$ containing $z$ with $\bv\setminus \{z\}\cup\{y\}$. In particular, the vertex $\bu$ is replaced by $\bu'$. The final step is to append the subtree $T_{\bu'}$ to $\bu'$ (See Figure~\ref{fig:tree_procedure}).

\begin{figure}
    \centering
     \begin{tikzpicture}[every node/.style={draw, circle, font=\tiny, inner sep=1pt}, 
    edge from parent/.style={draw, -stealth}, 
    level distance = 30pt]
     \draw[rounded corners, fill=blue!1] (-4.2,-4.55) rectangle (5.7,.35);
    \node(o){123}
        child {node {124} 
            child {node[rectangle, inner sep=3pt,fill=gray!30] {125}}
            child {node[rectangle, inner sep=3pt] {145}}
            child {node[rectangle, inner sep=3pt] {245}}
        }
        child {node[rectangle, inner sep=3pt] {134}}
        child [missing]
        child {node {234} 
            child {node[rectangle, inner sep=3pt,fill=gray!30] {236}}
            child {node[rectangle, inner sep=3pt] {246}}
            child {node  {346}
                child {node[rectangle, inner sep=3pt] {347}}
                child {node {367}
                    child {node[rectangle, inner sep=3pt] {368}}
                    child {node[rectangle, inner sep=3pt,fill=gray!30] {378}}
                    child {node[rectangle, inner sep=3pt,fill=gray!30] {678}}
                }
                child {node[rectangle, inner sep=3pt] {467}}
            }
        }
        ;
        \node[draw=none,font=\small] at ($(o-4-3)+(0.38,0.05)$) {$\bu$};
        \node[draw=none,font=\small] at ($(o-4-3-2)+(0.45,0.2)$) {$\bu'$};
     \path[blue, draw, thick,dashed, rounded corners, fill=blue, fill opacity=0.05] ($(o-4)+(0,0.5)$) -- ($(o-4-1)+(-0.4,0.2)$) -- ++(0,-2.55) -- ($(o-4-3-2-3)+(0.4,-0.25)$) -- ++(0,1.55) -- cycle;
     \draw[blue, thick, dashed, rounded corners, fill=blue, fill opacity=0.05] ($(o-2)+(-0.4,-0.3)$) rectangle ($(o-2)+(0.4,0.3)$);
     \path[blue, draw, thick,dashed, rounded corners, fill=blue, fill opacity=0.05] ($(o-1)+(0,0.45)$) -- ($(o-1-1)+(-0.4,0.2)$) -- ++(0,-0.45) -- ($(o-1-3)+(0.4,-0.25)$) -- ++(0,.45) -- cycle;
    \node[draw=none,blue,font=\small] at ($(o-1)+(-0.7,0.35)$) {$H_1$};
    \node[draw=none,blue,font=\small] at ($(o-2)+(0.7,0.35)$) {$H_2$};
    \node[draw=none,blue,font=\small] at ($(o-4)+(0.7,0.35)$) {$H_3$};
    \end{tikzpicture}    
    \begin{tikzpicture}[every node/.style={draw, circle, font=\tiny, inner sep=1pt}, 
    edge from parent/.style={draw, -stealth}, 
    level distance = 30pt]
     \draw[rounded corners, fill=red!1] (-4.2,-3.5) rectangle (5.7,.35);
    \node{123}
        child {node {127} 
            child {node[rectangle, inner sep=3pt,fill=gray!30] {125}}
            child {node[rectangle, inner sep=3pt] {157}}
            child {node[rectangle, inner sep=3pt] {257}}
        }
        child {node[rectangle, inner sep=3pt] {137}}
        child [missing]
        child {node {237} 
            child {node[rectangle, inner sep=3pt,fill=gray!30] {236}}
            child {node[rectangle, inner sep=3pt] {267}}
            child {node {367}
                    child {node[rectangle, inner sep=3pt] {368}}
                    child {node[rectangle, inner sep=3pt,fill=gray!30] {378}}
                    child {node[rectangle, inner sep=3pt,fill=gray!30] {678}}
                }    }
        ;
    \end{tikzpicture}  
    \caption{An example of the construction of $H$ (bottom) from $H'$ (top) in the non-trivial case in which $P\neq\emptyset$ (shaded) and $z=4\notin R$. We use the rightmost tree to replace $z$, so, in the notations of the proof, $\bu=346$, $y=7$ and $T_{\bu'}$ is the tree descending from $\bu'=367$. To obtain $H$ we delete the descendants of $346$, replace all the appearances of $4$ by $7$, and attach $T_{\bu'}$ to the vertex $367$ that replaced $346$. Alternatively, the leftmost subtree could have been used with $\bu=124$ and $y=5$. }
    \label{fig:tree_procedure}
\end{figure}
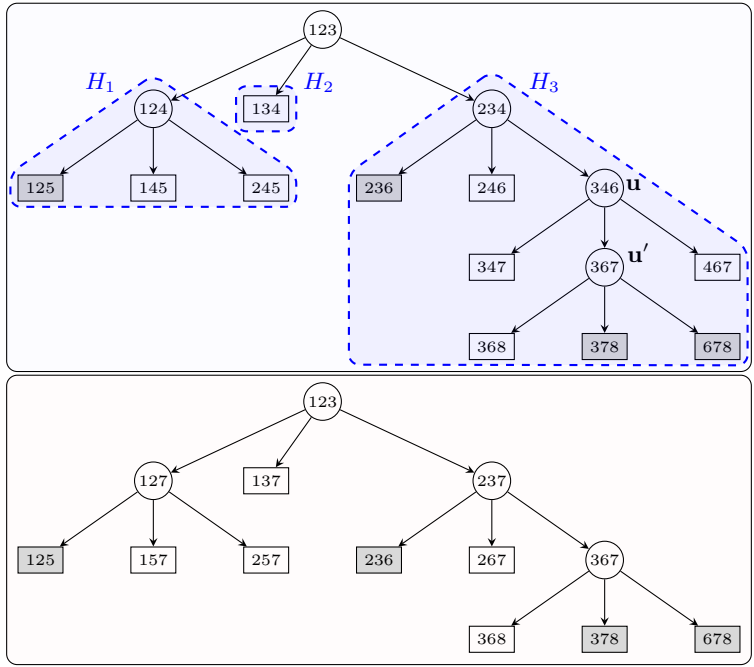

The fact that $P\subseteq \sL(H)$ follows from observing that the only leaves in $\sL(H')\setminus \sL(H)$ are the $d$ children of $\bu$ which were deleted from $H$, namely $\bu\setminus\{y_i\}\cup\{y\},~i=1,\ldots,d$. All these leaves contain $z\notin R$ hence they do not belong to $P$. In addition, $P\neq \sL(T)$ since $z\notin R$, so we also need to show that $P\neq \sL(H)$. To prove this, we note that there exists $\bv\in\sL(H_1)$ that contains $z$ (we could have chosen a leaf from any of the trees $H_1,\ldots,H_d$ --- but not from $H_{d+1}$ --- and $H_1$ is selected arbitrarily here). Indeed, since the root $\bv_1$ of $H_1$ contains $z$, one of its leaves must also contain it. We claim that $\bv'=\bv\setminus\{z\}\cup\{y\} \in \sL(H)\notin P$. First, $\bv'\in\sL(H)$ since our construction replaces every $z$ with a $y$. Second, $\bv'$ contains a label that appears in $T$ only in vertices of $T_1$. On the other hand, the label $y$ appears in $T$ only in vertices of $T_{d+1}$. Therefore $\bv'$ is not a leaf of $T$, and does not belong to $P$.

We turn to the proof of item~\eqref{lem_item:bos_b}. By the first item, we have that $P\subsetneq \sL(H)$ whence $|P|<|\sL(H)|=dr+1$. Furthermore, if $P=\sL(T)\setminus \sL(T_{\bv})$ then, bu construction, $H$ is obtained from $T$ by removing all the descendants of $\bv$. Therefore, $\bv$ is the only leaf of $H$ that is not in $P$ whence $|P|=dr$. On the other hand, if $|P|=dr$ then $P$ is obtained from $\sL(H)$ by removing one leaf. In consequence, for $j=1,\ldots,d$ , w.l.o.g., $P_j=\sL(H_j)$ whence, by the first item, $P_j=\sL(T_j)$. In particular, $z\in R$ and we have that  
\[
|P_{d+1}|=dr - \sum_{j=1}^{d}(dr_j+1) = 
d\bigg(
\one_{\{z\in R\}}+\sum_{j=1}^{d+1}r_j\bigg) - \sum_{j=1}^{d}(dr_j+1)
=
dr_{d+1}\,.
\] The claim follows by induction.
\end{proof}

Fix the root $\bo=\{1,\ldots,d+1\}$, and consider the families
\[\cL=\left\{\sL(T)~:~\mbox{$T$ is a balanced proper $s$-pedigree for $\bo$}\right\}\]
and
\[
\cP=\left\{P~:~\exists L\in\cL,~\emptyset\neq P\subsetneq L\right\}\,.
\]
Let $z\in [n]$ and $K\subseteq \binom{[n]}{d+1}.$ The link of $z$ in $K$ is
\[
\mathrm{lk}_z(K) = \{\bv\setminus \{z\}~:~z\in\bv\in K\}\,.
\]
\begin{definition}
Let $P\in\cP$. We say that an internal label $z\in \cup_{\bv\in P}\bv\setminus\bo$ is an encircled label of $P$ if $\mathrm{lk}_z(P)=\mathrm{lk}_z(L)$ for every $L\in\cL$ containing $P$. Otherwise, $z$ is called a boundary label of $P$.
\end{definition}

\begin{claim}\label{clm:partition_of_labels}
For every $L\in\cL$ and $P\subseteq L$, every internal label $z\in\cup_{\bv\in L}\bv\setminus \bo$ of $L$ satisfies exactly one of the following: $z$ is an encircled label of $P$, $z$ is an encircled label of $L\setminus P$, or $z$ is a boundary label of both $P$ and $L\setminus P$.
\end{claim}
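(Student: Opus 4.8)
\textbf{Proof proposal for Claim~\ref{clm:partition_of_labels}.}

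The plan is to unpack the definitions and argue directly about the links, using Lemma~\ref{lem:boston} as the main structural tool. Fix $L\in\cL$ and $P\subseteq L$; we may assume $\emptyset\neq P\subsetneq L$, since the boundary cases are trivial. Write $P'=L\setminus P$. The key observation I would isolate first is a \emph{splitting} property of links: for any internal label $z$ of $L$, the relation $\mathrm{lk}_z(L)=\mathrm{lk}_z(P)\cup\mathrm{lk}_z(P')$ holds, simply because every face of $L$ containing $z$ lies in exactly one of $P$ or $P'$. Moreover each of $\mathrm{lk}_z(P)$ and $\mathrm{lk}_z(P')$ is a subset of $\mathrm{lk}_z(L)$, and—crucially—for any $L^*\in\cL$ containing $P$ we have $\mathrm{lk}_z(P)\subseteq \mathrm{lk}_z(L^*)$, with the analogous statement for $P'$. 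Thus ``$z$ is encircled for $P$'' says precisely that the lower bound $\mathrm{lk}_z(P)$ already matches $\mathrm{lk}_z(L^*)$ for \emph{every} admissible $L^*$, in particular for $L$ itself.

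Next I would establish the \emph{mutual exclusivity}: $z$ cannot be encircled for both $P$ and $P'$. If it were, then $\mathrm{lk}_z(P)=\mathrm{lk}_z(L)$ and $\mathrm{lk}_z(P')=\mathrm{lk}_z(L)$; combined with the splitting property this forces $\mathrm{lk}_z(P)=\mathrm{lk}_z(P')=\mathrm{lk}_z(L)$. I would then derive a contradiction by producing an $L^*\in\cL$ with $P\subseteq L^*$ (or $P'\subseteq L^*$) in which the link of $z$ is \emph{strictly larger}. The mechanism is exactly the deletion-and-relabel move in the proof of Lemma~\ref{lem:boston}\eqref{lem_item:bos_a}: take the balanced proper pedigree $T$ with $\sL(T)=L$; the label $z$ sits on a whole subtree of internal vertices of $T$, and the faces of $P$ containing $z$ occupy only part of it, while the faces of $P'$ occupy the complementary part (they cannot occupy the same internal vertex, since such a vertex would have all $d+1$ children as leaves and those $d+1$ leaves lie in a single part). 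So one can delete from $T$ the internal vertices carrying $z$ that are ``used only by $P'$'', re-route, and obtain a new balanced proper pedigree $\tilde T$ with $P\subseteq \sL(\tilde T)\subsetneq L$ in which $z$'s link is a proper subset of $\mathrm{lk}_z(L)$. That contradicts $z$ being encircled for $P$. (Symmetrically if we shrink on the $P$-side.) Hence at most one of the two ``encircled'' alternatives holds.

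Finally, I would prove \emph{exhaustiveness}: if $z$ is \emph{not} a boundary label of $P$ then it is encircled for $P$ by definition, so we are done; if it \emph{is} a boundary label of $P$, I claim it is then encircled for $P'$. Being a boundary label of $P$ means there is some $L^\dagger\in\cL$ with $P\subseteq L^\dagger$ and $\mathrm{lk}_z(P)\subsetneq\mathrm{lk}_z(L^\dagger)$; intuitively $z$ is ``exposed'' on the $P$-side, so it must be ``sealed'' on the $P'$-side. To make this precise I would again use the subtree picture: the faces of $P'$ containing $z$ fill up a sub-collection of the $z$-carrying internal vertices of $T$ that is \emph{downward closed} in the appropriate sense among those not already forced by $P$, and the deletion-relabel operation shows any $L^*\supseteq P'$ must contain all of $\mathrm{lk}_z(P')$ and cannot contain more without violating the balanced/proper constraints together with $\mathrm{lk}_z(P)\subsetneq\mathrm{lk}_z(L^\dagger)$. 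Combining the splitting identity $\mathrm{lk}_z(L^*) = \mathrm{lk}_z(P\cap L^*)\cup\mathrm{lk}_z(P')$ with the forced value on the $P'$-part then pins $\mathrm{lk}_z(P')=\mathrm{lk}_z(L^*)$ for every such $L^*$. So exactly one of the three alternatives always holds.

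\textbf{Main obstacle.} The routine part is the link-splitting bookkeeping; the delicate part is the third step—showing ``boundary for $P$'' forces ``encircled for $P'$.'' This requires a genuine rigidity statement about how the label $z$ can be placed across admissible balanced proper pedigrees, and the cleanest route is to recast it entirely in terms of the tree $T$ and the subtree spanned by $z$-vertices, invoking the construction in Lemma~\ref{lem:boston} (and the ``$d+1$ children of a single vertex lie in one part'' observation) to control which re-routings are possible. I expect that is where the real work lies.
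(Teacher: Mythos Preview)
Your exhaustiveness step contains a genuine error: you assert that if $z$ is a boundary label of $P$ then it is encircled for $P'=L\setminus P$. This is false, and in fact contradicts the very trichotomy you are trying to prove --- option~(c) is precisely the case where $z$ is a boundary label of \emph{both} $P$ and $P'$, and it occurs whenever the faces of $L$ containing $z$ are split nontrivially between $P$ and $P'$. (Concretely: take any internal vertex $z$ of the triangulation and put some but not all of the $d$-faces incident to $z$ into $P$.) Your sketched ``rigidity'' argument via deletion--relabel cannot rescue this, because there is nothing to rescue: the implication you want is simply not true. Separately, your mutual-exclusivity step is overcomplicated. Since $P$ and $P'$ are disjoint subsets of $L$, the links $\mathrm{lk}_z(P)$ and $\mathrm{lk}_z(P')$ are disjoint with union $\mathrm{lk}_z(L)\neq\emptyset$; hence they cannot both equal $\mathrm{lk}_z(L)$, and no re-routing construction is needed.

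The paper's proof avoids all of this by first establishing an \emph{intrinsic} characterization: $z$ is encircled for $P$ if and only if $\mathrm{lk}_z(P)$ is (the set of $(d-1)$-faces of) a triangulation of $\mathbb{S}^{d-1}$. This holds because $\mathrm{lk}_z(L^*)$ is such a sphere triangulation for every $L^*\in\cL$, and no proper subset of a triangulated $\mathbb{S}^{d-1}$ is itself homeomorphic to $\mathbb{S}^{d-1}$. With this in hand the claim is immediate: either $\mathrm{lk}_z(P)=\mathrm{lk}_z(L)$ (so $z$ is encircled for $P$), or $\mathrm{lk}_z(P)=\emptyset$ (so $\mathrm{lk}_z(P')=\mathrm{lk}_z(L)$ and $z$ is encircled for $P'$), or $\mathrm{lk}_z(P)$ is a proper nonempty subset of $\mathrm{lk}_z(L)$, whence neither $\mathrm{lk}_z(P)$ nor $\mathrm{lk}_z(P')$ is a sphere and $z$ is a boundary label of both. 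The point you are missing is this topological input; trying to replace it by pedigree surgery and Lemma~\ref{lem:boston} is both harder and, in your outline, leads you to a false intermediate claim.
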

\begin{proof}
By viewing a set $L\in\cL$ as the $d$-dimensional faces of a stacked triangulation of the simplex $\bo$, and an internal label $z$ as an internal vertex in this triangulation, we observe that the link $\mathrm{lk}_z(L)$ consists of the $(d-1)$-faces of a triangulation of the $(d-1)$-dimensional sphere $\mathbb S^{d-1}$ (and in fact this triangulated sphere is the combinatorial boundary of a stacked polytope).
This observation gives rise to the following intrinsic characterization of an encircled label. Let $P\in\cP$ and $z$ be an internal label that appear in $P$. We claim that $z$ is encircled in $P$ if and only if $\mathrm{lk}_z(P)$ consists of the $(d-1)$-faces of a triangulation of $\mathbb S^{d-1}$. This condition is clearly necessary since the property that the link of $z$ is a triangulation of $\mathbb S^{d-1}$ holds true for every $L\in\cL$. In addition, if for some $L\in\cL$ that contains $P$ there holds $\mathrm{lk}_z(P)\subsetneq\mathrm{lk}_z(L)$ then $\mathrm{lk}_z(P)$ is not a triangulation of $\mathbb S^{d-1}$ --- since no strict subset of $\mathbb S^{d-1}$ is homeomorphic to $\mathbb S^{d-1}$. Hence, this condition is also sufficient. 

We now turn to the proof of the claim. For every internal label $z\in \cup_{\bv\in L}\bv\setminus \bo$ of $L$, its link $\mathrm{lk}_z(P)$ in $P$ is either equal to $\mathrm{lk}_z(L)$, empty, or neither of these. The first case holds true if and only if $\mathrm{lk}_z(P)$ is a triangulation of $\mathbb S^{d-1}$ whence $z$ is an encircled label of $P$. Similarly, the second case is satisfied if and only if $\mathrm{lk}_z(L\setminus P)=\mathrm{lk}_z(L)$ is a triangulation of $\mathbb S^{d-1}$ whence $z$ is an encircled label of $L\setminus P$. In the third case neither $\mathrm{lk}_z(P)$ nor $\mathrm{lk}_z(L\setminus P)$ is a a triangulation of $\mathbb S^{d-1}$ and $z$ is a boundary label of both sets.
\end{proof}

\begin{lemma}\label{lem:partial_enum}
The following holds for every integers $0 \le t \le r \le s$.
\begin{enumerate}[(1)]
    \item The number of sets $P\in\cP$ with $r-t$ encircled labels and $t$ boundary labels is at most
$2(\alpha_dn)^{r}(2^dr)^t$.
\item For every set $P\in\cP$ with $r-t$ encircled labels and $t$ boundary labels, the number of $L\in\cL$ containing $P$ is at most $2(\alpha_dn)^{s-r}(\alpha_d2^d(s-r+t))^t$.
\end{enumerate}
\end{lemma}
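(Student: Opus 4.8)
The plan is to prove both parts of Lemma~\ref{lem:partial_enum} by encoding the combinatorial objects as sequences of choices and bounding the number of choices at each step, using Lemma~\ref{lem:boston} to control the structure, and the Fuss--Catalan bound $\sC_d(k)\le\alpha_d^k$ to count the tree shapes. The basic strategy for part~(1) is as follows. Given $P\in\cP$ with $r-t$ encircled labels and $t$ boundary labels, first apply Lemma~\ref{lem:boston}\eqref{lem_item:bos_a} to the pedigree $T$ realizing $P$ (i.e.\ with $P\subsetneq\sL(T)$ for some balanced proper $s$-pedigree $T$): there is a proper $r$-pedigree $H=H(P,T)$ for $\bo$ with $P\subseteq\sL(H)$. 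Since $H$ is a tree with $r$ internal vertices, its underlying shape is one of at most $\sC_d(r)\le\alpha_d^r$ $(d+1)$-ary trees; its $r$ distinct internal outgoing labels can be chosen in at most $n^r$ ways (choosing which label sits at which internal vertex in BFS order — each internal vertex gets a fresh label). This specifies $H$ entirely, hence $\sL(H)$, and $P$ is a subset of $\sL(H)$ of size $\le dr$; but we do not pay a further $2^{dr+1}$ factor for the subset because of the extra information $t$ — instead one observes that $P$ is obtained from $\sL(H)$ by removing leaves, and the removed leaves are exactly those not in $P$. The factor $(2^d r)^t$ and the leading $2$ should come from the following refinement: the encircled labels of $P$ force full sub-pedigrees to be present, so the only freedom in passing from the "canonical closure'' to $P$ is localized at the $t$ boundary labels — at each boundary label $z$ one chooses a subset of $\mathrm{lk}_z(H)$, and $|\mathrm{lk}_z(H)|$ is at most the number of $(d-1)$-faces of a triangulation of $\mathbb S^{d-1}$ with the relevant number of internal vertices, which is $O(r)$, contributing a factor $\le 2^{O(r)}$; tracking this more carefully (summing over the $\le r$ vertices whose link is affected, and noting each is partitioned into $d$-many ``sides'') yields $(2^dr)^t$. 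The leading constant $2$ absorbs the $\sum$ over $r'\le r$ implicit in not knowing $|\sL(H)|=dr+1$ exactly in the edge case.

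For part~(2), fix $P\in\cP$ with $r-t$ encircled and $t$ boundary labels, and count $L\in\cL$ (leaf-sets of balanced proper $s$-pedigrees) with $P\subseteq L$. By Claim~\ref{clm:partition_of_labels}, the internal labels of any such $L$ split into: the $r-t$ encircled labels of $P$ (whose links in $L$ are forced to equal their links in $P$), the boundary labels of $P$ (of which there are $t$), and $s-(r-t)-(\text{encircled labels of }L\setminus P)$ further internal labels coming from $L\setminus P$ together with possibly-enlarged links at the boundary labels. The point is that passing from $P$ to $L$ amounts to: choosing a balanced proper $s$-pedigree shape compatible with $P$ (at most $\sC_d(s-r)\le\alpha_d^{\,s-r}$ choices for the ``new'' part, since the part determined by $P$ contributes $\le r$ internal vertices and is fixed), choosing the $s-r$ genuinely new internal labels ($\le n^{s-r}$ ways), and at each of the $t$ boundary labels $z$ choosing how $\mathrm{lk}_z(P)$ is completed to $\mathrm{lk}_z(L)$ — a triangulation-completion of a partial triangulation of $\mathbb S^{d-1}$ whose number of added cells is at most $s-r+t$, bounded by a Catalan-type count $\le\alpha_d^{\,s-r+t}(s-r+t)^{t}2^{dt}$ after distributing. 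Collecting these gives $2(\alpha_dn)^{s-r}(\alpha_d2^d(s-r+t))^t$, where again the leading $2$ handles a geometric sum over the exact size of the forced part.

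The key technical tool common to both parts is the "canonical pedigree'' $H(P,T)$ of Lemma~\ref{lem:boston} and the intrinsic sphere-triangulation characterization of encircled labels from the proof of Claim~\ref{clm:partition_of_labels}: an internal label $z$ is encircled in $P$ iff $\mathrm{lk}_z(P)$ is a full triangulation of $\mathbb S^{d-1}$. This lets us say that an encircled label contributes \emph{no} combinatorial freedom beyond its name, while a boundary label contributes only the ``partial link'' data, which is what the $(\cdot)^t$ factors account for. I would organize the write-up as: (a) state the sphere characterization and note $|\mathrm{lk}_z(L)|=O(\text{number of internal vertices of }L)$ for triangulated spheres obtained from stacked polytopes; (b) prove part~(1) via the encode-$H(P,T)$-then-trim argument; (c) prove part~(2) via the encode-the-completion argument, reusing the same link bounds.

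The main obstacle I anticipate is getting the \emph{exact} exponents in the $(\cdot)^t$ factors rather than a cruder $2^{O(r)}$ or $2^{O(s)}$ bound — i.e.\ showing that the extra cost of the $t$ boundary labels is only polynomial-in-$(r)$ (resp.\ $(s-r+t)$) \emph{per label}, not exponential. This requires a genuinely local analysis of how a partial link can be extended, leaning on the fact that the link of an internal vertex in a stacked triangulation is itself the boundary complex of a stacked polytope and hence highly constrained (its cells are organized in a $(d+1)$-ary tree structure around $z$), so that ``which cells of $\mathrm{lk}_z(L)$ lie in $P$'' is determined by $O(1)$ subtree-choices among $O(r)$ options. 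A secondary subtlety is bookkeeping the interaction between the balanced constraint on $L$ and the choice of new tree shape, but since ``balanced'' only constrains the three top subtrees to have equal leaf-counts, it costs at most a polynomial factor absorbed into the constants, and in any case only \emph{decreases} the count, so it can be dropped for an upper bound if convenient.
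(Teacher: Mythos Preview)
Your overall scaffold (pass to the canonical pedigree $H=H(P,T)$ of Lemma~\ref{lem:boston}, count tree shapes by $\sC_d(r)\le\alpha_d^r$ and labels by $n^r$) is correct, but the crucial ``subset freedom'' step is handled differently in the paper, and your link-based approach does not obviously close the gap you yourself flag.

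For part~(1), you propose to localize the freedom at each boundary label $z$ by choosing a subset of $\mathrm{lk}_z(H)$. Since $|\mathrm{lk}_z(H)|$ can be of order $r$, this gives $2^{O(rt)}$, far from $(2^dr)^t$, and your sketch of how to sharpen this (``partitioned into $d$-many sides'') is not a real argument. The paper's idea is simpler and avoids links entirely: every leaf of $H$ that contains an encircled label is forced into $P$; the remaining leaves have all their labels in $B\cup\bo$, a set of size $t+d+1$. Now apply Lemma~\ref{lem:boston} a \emph{second} time, with $H$ playing the role of $T$ and these remaining leaves playing the role of $P$: there are at most $dt+\one_{\{r=t\}}$ of them, so the subset cost is only $2^{dt+1}$. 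The factor $r^t$ comes not from links but from choosing which $t$ of the $r$ internal vertices of $H$ receive the boundary labels. This is the key step you are missing.

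For part~(2), you attempt to build $L$ by grafting a ``new'' tree of shape $\sC_d(s-r)$ onto $H(P,T)$ and then completing links at boundary labels. This decomposition is not correct: $L$ is the leaf-set of a single balanced $s$-pedigree $T$, and $H(P,T)$ need not sit inside $T$ as a subtree, so there is no clean ``new part'' of size $s-r$ to enumerate. The paper instead uses the symmetry of Claim~\ref{clm:partition_of_labels}: $P':=L\setminus P$ has \emph{the same} boundary-label set $B$ as $P$, together with $s-r$ encircled labels disjoint from those of $P$. Thus $P'$ is exactly the kind of object already counted in part~(1) with $B$ fixed, and the intermediate bound~\eqref{eq:P-count-generalized} (with $r$ replaced by $s-r+t$) applies directly. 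This reduction to part~(1) via $P'$ is the missing idea; no per-label link-completion count is needed.
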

\begin{proof}
For the first item, suppose that the set $B\in\binom{\{d+2,\ldots,n\}}{t}$ of the $t$ boundary labels of $P$ is fixed. Recall that, by item~\eqref{lem_item:bos_a} of Lemma~\ref{lem:boston}, every $P\in \cP$ with $r$ internal labels is contained in the set of leaves a proper $r$-pedigree for $\bo$. To construct such a pedigree $H$ we (i) select a $(d+1)$-ary trees with $r$ internal nodes, (ii) split the nodes to the $t$ nodes that will get a boundary label from $B$ and the $r-t$ node that will get an encircled label from $[n]\setminus (B\cup\{1,\ldots,d+1\})$, (iii) select the encircled labels and (iv) assign the labels to the nodes accordingly. We construct $P$ from the pedigree $H$ as following. First, we must include  in $P$ all the leaves containing an encircled label. The remaining leaves are spanned by the $t$ boundary labels and by Lemma~\ref{lem:boston} 
(in which we now let $H$ and $t$ play the roles of $T$ and $r$, respectively) there are at most $dt+\one_{\{r=t\}}$ of them. We conclude the construction of $P$ by selecting a subset of these leaves.

In summary, the number of sets $P\in\cP$ with a specific set $B$ of $t$ boundary labels and $r-t$ encircled labels is at most
\begin{equation}
\sC_d(r)\binom rt\binom{n-d-1-t}{r-t}t!(r-t)!2^{dt+1} \le 
2(\alpha_d n)^{r-t}(\alpha_d2^dr)^t,
\label{eq:P-count-generalized}    
\end{equation}
and the first item is proved by additionally selecting the set $B$.

For the second item, fix a set $P\in \cP$ with the above parameters, and let $B$ denote the set of its $t$ boundary labels. Every $L\in\cL$ containing $P$ is determined by the set $P':=L\setminus P $.  By Claim~\ref{clm:partition_of_labels}, $B$ is the set of boundary labels of $P'$, and in addition $P'$ has $s-r$ encircled labels that do not appear in $P$. Thus, by~\eqref{eq:P-count-generalized}, the number of such sets $P'$ (and in turn, the number of $L\in\cL$ with $L \supseteq P$) is at most
\[ 2(\alpha_d n)^{s-r} (\alpha_d  2^d (s-r+t))^t\,, \]
as claimed.
\end{proof}

\subsection{Proof of Theorem~\ref{thm:supcrit}}
Let $Y \sim \cY_d(n,p)$. Throughout the proof, we denote by $c_d$ a constant depending on $d$ that may change from line to line. We define
\begin{align*}
\mu &= \sum_{L\in\cL}\P\left(\{L\subseteq Y\}\right)\,,\\
\rho&=\sum_{\substack{L_1\ne L_2\in\cL,\\L_1\cap L_2\neq\emptyset}}\P\left(\{L_1,L_2\subseteq Y\}\right)\,.
\end{align*}
The cardinality of $\cL$ is $\sC_d(s')^{d+1}(n-d-1)_s$ and every $L\in\cL$ is of size $ds+1$ whence
\begin{align}
    \mu&=\sC_d(s')^{d+1}(n-d-1)_sp^{ds+1} \nonumber\\
    &=(c_d+o(1))(\alpha_dn)^{-1/d}(s')^{-3(d+1)/2}(1+\epsilon)^{ds+1}=e^{(d(d+1)+o(1))n^{\delta}} \label{eqn:mu}\,.
\end{align}

We can express $\rho$ as
\begin{align*}
\rho&=\sum_{P\in\cP}
\sum_{\substack{L_1\ne L_2\in\cL,\\L_1\cap L_2=P}}\P(\{L_1,L_2\subseteq Y\})
=\sum_{P\in\cP}
\sum_{\substack{L_1\ne L_2\in\cL,\\L_1\cap L_2=P}}p^{2(ds+1)-|P|}
\\
&\le \sum_{P\in\cP}|\{L\in\cL~:~P\subseteq L\}|^2\, p^{2(ds+1)-|P|}\,.\\
\end{align*}
The inequality is obtained by summing over all $L_1,L_2$ whose intersection contains $P$ rather than distinct $L_1,L_2$ whose intersection is equal to $P$. By Lemma~\ref{lem:partial_enum} we find
\begin{equation} \label{eqn:rho}
\rho \le 8\sum_{m,r,t} (\alpha_dn)^{m/d-r-2/d}(1+\epsilon)^{2(ds+1)-m}(\alpha_d2^ds)^{3t}\,,
\end{equation}
where the summation of over all admissible tuples $m,r$ and $t$ for which there exists $P\in\cP$ with $|P|=m$, $r-t$ encircled labels and $t$ boundary labels. Combining~\eqref{eqn:rho} and the LHS of~\eqref{eqn:mu} yields
\begin{align}\nonumber
\rho/\mu^2&\le c_d\sum_{m,r,t} (\alpha_dn)^{f/d-r}(1+\epsilon)^{-m}(\alpha_d2^ds)^{3(t+d+1)}\\
\label{eqn:pho_mu2} 
&\le c_d\max_{m,r,t} (\alpha_dn)^{m/d-r}(1+\epsilon)^{-m}(\alpha_d2^ds)^{3(t+d+2)}\,.
\end{align}
The last inequality follows from the fact that $t\le r\le s$ and $m\le ds$.

We now turn to bound the expression in~\eqref{eqn:pho_mu2}, where the power $m/d-r$ of $n$ plays a key role. Lemma~\ref{lem:boston} asserts that $m/d-r\le  0$, and we claim that if equality holds then $m \ge d(ds'+1)$. Indeed, consider a set $P\in\cP$ for which $m=dr$, and let $T$ be a balanced proper $s$-pedigree satisfying $P\subseteq\sL(T)$. By item~\eqref{lem_item:bos_b} of Lemma~\ref{lem:boston}, the equality $m=dr$ implies that $P=\sL(T)\setminus\sL(T_{\bv})$ for some vertex $\bv$ of $T$. Since $T$ is balanced --- and this is the crucial place we use this assumption --- $T_{\bv}$ contains at most $ds'+1$ leaves, and
\[
m=|P| \ge ds+1 - (ds'+1) = d(ds'+1)\,.
\]

In addition, recall that $P$ is obtained from a proper $r$-pedigree $H$ by removing $dr+1-m$ leaves. Every leaf removal increases the number of the boundary labels of $P$ by  at most $d+1$. Therefore, we find that $t\le (d+1)(dr+1-m)$, and equivalently, $m/d-r \le 1/d-t/(d(d+1))$. 

Denote the expression in~\eqref{eqn:pho_mu2} by 
\[\Xi:= (\alpha_dn)^{m/d-r}(1+\epsilon)^{-m}(\alpha_d2^ds)^{3(t+d+2)}\,,\] 
and consider the following three cases.

\begin{enumerate}[\bf {Case} (i).]
    \item $m=dr$. By the discussion above, this can occur only if $t\le d+1$ and $m\ge d(ds'+1)$. Therefore,
    \[
    \Xi
    \le e^{-(d^2+o(1))n^{\delta}}\,.
    \]
    \item $t \ge 2(d+1)$. We recall that $m/d-r\le 1/d-t/(d(d+1))$ and find
    \begin{equation}
        \label{eq:case2}
    \Xi
    \le
     (\alpha_dn)^{1/d-t/(d(d+1))}(\alpha_d2^ds)^{3(t+d+2)}\,.
\end{equation}
    The derivative of the logarithm of the RHS of~\eqref{eq:case2} with respect to $t$ is
    \[
    -\frac{\log{n}}{d(d+1)}+3\log{s} + O_d(1) = \log{n}\left(-\frac{1}{d(d+1)}+6\delta\right) +O_d(1)\,,
    \]
    which is negative for all sufficiently large integers $n$. Therefore, we can bound $\Xi$ by assigning $t=2(d+1)$ in the RHS of~\eqref{eq:case2}, yielding 
    \[
    \Xi\le n^{-\frac{2d+3}
    {3d(d+1)^2}+o(1)}\,.
    \]
    \item $m\le dr-1$ and $t\le 2d+1$. By a direct assignment we have that 
        \begin{align*}
        \Xi\le n^{-\frac{1}
    {d(d+1)}+o(1)}\,.
    \end{align*}
\end{enumerate}
In summary, and using~\eqref{eqn:pho_mu2}, we find that
\begin{equation}\label{eq:rho/mu2-bound}
\rho/\mu^2 \le 
n^{-\frac{2d+3}{3d(d+1)^2}+o(1)}\,.
\end{equation}
Substituting~\eqref{eqn:mu} and~\eqref{eq:rho/mu2-bound} in
Janson's inequality then establishes that 
\[
\P\bigg(\bigcap_{L\in\cL}\{L\not\subseteq Y\}\bigg) \le
\exp\Big( - \frac{\mu^2}{\mu+\rho} \Big)
= \exp\Big(n^{\frac{2d+3}{3d(d+1)^2}+o(1)} \Big) 
< \exp\Big(-n^{12\delta}\Big)\,,
\]
with the last inequality valid for large enough $n$. This completes the proof.
\qed

\section{A closer look at the subcritical regime}\label{sec:subcritical_zoom}
In this section we prove Theorem~\ref{thm:density}. Let $0\le \gamma <\alpha_d^{-1/d}$, $p=\gamma n^{-1/d}$ and $Y\sim\cY_d(n,p)$.
Recall from ~\S\ref{subsection:stacked} that the smallest positive root $\hat\gamma $ of $Q_\gamma(x)=x^{d+1}-x+\gamma$ is given by
\[
\hat\gamma := \sum_{s=0}^{\infty}\sC_d(s)\gamma^{ds+1}\,.
\]
For an integer $s\ge 0$, we denote by $\mathfrak T_{s}$ the set of $(d+1)$-ary trees with $s$ internal vertices, and let
$\mathfrak T_{\le s}=\cup_{s'\le s}\mathfrak T_{s'}$. In addition, for every $s\ge 0$ and a tree $T\in\mathfrak T_s$, let $X_T$ be the number of proper $s$-pedigrees whose leaves are all members of $Y$ and whose underlying tree is $T$.
We prove Theorem~\ref{thm:density} using the following lemmas.
\begin{lemma}\label{lem:X_T}
Fix an integer $s\ge 0 $ and $T\in\mathfrak T_s$.
Then,
\[
n^{1/d}\frac{X_T}{\binom n{d+1}} \to \gamma^{ds+1}
\]
in probability as $n\to\infty$.
\end{lemma}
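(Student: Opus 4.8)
The plan is to prove $n^{1/d} X_T/\binom{n}{d+1}\to \gamma^{ds+1}$ via the first and second moment method, for a \emph{fixed} tree $T\in\mathfrak T_s$ (so $s$ and the combinatorial type $T$ do not grow with $n$). First I would compute $\E[X_T]$. A proper $s$-pedigree with underlying tree $T$ is specified by choosing the $s$ internal (outgoing) labels together with the $d+1$ root labels, all distinct, from $[n]$; this is a labeling of the $s+d+1$ ``label-vertices'' of $T$, of which there are $(n)_{s+d+1}/(\text{root symmetry})$, but it is cleaner to note that each such pedigree has exactly $ds+1$ leaves, each a distinct $(d+1)$-subset of $[n]$ (distinctness of leaves is where properness is used). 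Hence $\E[X_T] = N_T\, p^{ds+1}$ where $N_T$ is the number of proper $s$-pedigrees with underlying tree $T$ and roots in $[n]$, which satisfies $N_T = (1+o(1))\,\binom{n}{d+1}\,(d+1)!\,\,n^{s}/(\text{automorphisms of }T)$... rather than track constants, the key point is $N_T = (1+o(1))\,c_T\, n^{s+d+1}$ for an explicit positive constant $c_T$, and in fact $N_T/\big(\binom{n}{d+1} n^s\big)\to 1$ once one fixes the root to be a prescribed ordered tuple and then quotients correctly — I would simply verify $\E[X_T]/\binom{n}{d+1} = (1+o(1)) n^s p^{ds+1} = (1+o(1)) \gamma^{ds+1} n^{-1/d}$, i.e.\ $n^{1/d}\E[X_T]/\binom{n}{d+1}\to\gamma^{ds+1}$.

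Next I would bound the variance. Write $X_T = \sum_{\pi} \one\{\sL(\pi)\subseteq Y\}$ over proper $s$-pedigrees $\pi$ with underlying tree $T$. Then $\var(X_T) = \sum_{\pi,\pi'}\big(\P(\sL(\pi)\cup\sL(\pi')\subseteq Y) - p^{2(ds+1)}\big)$, and the only pairs contributing are those with $\sL(\pi)\cap\sL(\pi')\neq\emptyset$. For such a pair, if they share $j\ge 1$ common leaves then the contribution is at most $p^{2(ds+1)-j}$. The number of pairs $(\pi,\pi')$ sharing at least one leaf is $O(n^{2(s+d+1)-(d+1)})$ — heuristically, fixing a shared leaf collapses $d+1$ of the roughly $2(s+d+1)$ degrees of freedom — and more generally the pairs sharing a sub-pedigree on $r$ internal labels number $O(n^{2(s+d+1)-r-(d+1)})$ while losing a factor $p^{\ge dr+1}$ in probability (this is exactly the bookkeeping of Lemma~\ref{lem:boston} and Lemma~\ref{lem:partial_enum}, specialized to a fixed tree, where now there is no balancedness to exploit but also no need for it since $s$ is constant). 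Summing, $\var(X_T) = O\big( n^{2(s+d+1)-(d+1)} p^{2(ds+1)-1}\big) = O\big( n^{2s+d+1} p^{2ds+1}\big) = O\big( (\binom n{d+1})^2 n^{-1-1/d}\big)$, whereas $(\E X_T)^2 \asymp (\binom n{d+1})^2 n^{-2/d}$. Since $1+1/d > 2/d$ for $d\ge 2$, we get $\var(X_T)/(\E X_T)^2 = O(n^{-1+1/d}) = o(1)$, and Chebyshev gives $X_T/\E[X_T]\to 1$ in probability, hence $n^{1/d} X_T/\binom{n}{d+1}\to\gamma^{ds+1}$.

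The main obstacle is the variance estimate: one must organize the sum over intersecting pairs of pedigrees by the ``shared part'' in a way that simultaneously controls (a) the number of labels freed up by the overlap and (b) the number of shared leaves, and check that for every possible overlap type the exponent of $n$ comes out strictly below $2s + (d+1)\cdot 2 - 2/d \cdot d = \ldots$, i.e.\ strictly below the $(\E X_T)^2$ scaling. For a fixed tree this is a finite check in principle but needs the structural input that a shared collection of leaves of a proper pedigree is itself the leaf-set of a proper pedigree on the shared labels (Lemma~\ref{lem:boston}\eqref{lem_item:bos_a}), so that ``$r$ shared internal labels'' forces ``at least $dr+1$ shared leaves when the overlap is a full sub-pedigree'' and more generally the count $p^{2(ds+1)-|P|}$ with $|P|\le dr$ from Lemma~\ref{lem:boston}\eqref{lem_item:bos_b}; combining $|P|\le dr$ with the label count $n^{2(s+d+1)-r}$ shows every off-diagonal term is $O\big(\binom n{d+1}^2 n^{-2/d-1/d+\text{(something}\le 0)}\big)$, which is the needed gain. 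Once this is in hand the conclusion is immediate from Chebyshev's inequality.
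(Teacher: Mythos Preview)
Your approach is the paper's: compute $\E[X_T]$ exactly, then bound $\var(X_T)$ by parametrizing pairs of pedigrees according to their overlap and invoking Lemma~\ref{lem:boston}, and conclude by Chebyshev. The first-moment line $\E[X_T]=\binom{n}{d+1}(n-d-1)_s\,p^{ds+1}$ is right; your hesitation about automorphisms is a red herring, since the root is an unordered set and the outgoing labels come in a fixed order once $T$ is fixed.

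There is, however, a genuine gap in the variance step. You invoke $|P|\le dr$ from Lemma~\ref{lem:boston}\eqref{lem_item:bos_b} and pair it with a label count $n^{2(s+d+1)-r}$, asserting this yields $O\big(\binom{n}{d+1}^2 n^{-3/d}\big)$ per term. It does not: whatever $r$ is meant to denote, that combination gives only
\[
n^{2(s+d+1)-r}\,p^{2(ds+1)-dr}\;\asymp\;n^{2d+2-2/d}\;\asymp\;(\E X_T)^2\,,
\]
so Chebyshev is inconclusive. The paper parametrizes instead by the total number $q$ of labels common to $T_1$ and $T_2$ (root labels included) and proves the \emph{strict} inequality $t\le dq-1$ for the number $t$ of shared leaves, via a short case split: if $|P|<d|R|$ in Lemma~\ref{lem:boston} one is done since $|R|\le q$; if $|P|=d|R|$ then that lemma forces $P=\sL(T_1)\setminus\sL((T_1)_{\bv})$, so all $d+1$ root labels of $T_1$ appear in $P$ and hence in $T_2$, giving $q\ge |R|+d+1$ and $t=d|R|\le d(q-d-1)\le dq-1$. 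That single ``$-1$'' is exactly what produces the extra factor $n^{-1/d}$ and makes $\var(X_T)=O(n^{2d+2-3/d})=o\big((\E X_T)^2\big)$.

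Separately, the displayed equality $O\big(n^{2s+d+1}p^{2ds+1}\big)=O\big(\binom{n}{d+1}^2 n^{-1-1/d}\big)$ is off by a factor $n^{d}$, and the term with a single shared leaf is not the dominant one anyway; the binding case is $t=dq-1$.
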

\begin{proof}
We enumerate the proper $s$-pedigrees whose underlying tree is $T$ by its root $\bo\in\binom{[n]}{d+1}$ and the internal labels $z_1,\ldots,z_s$ given in some predetermined order (e.g., by the order of their appearance in a Breadth-First-Search on $T$).

The expectation of $X_T$ is given by
\begin{align}
\E[X_T] &= \binom n{d+1}(n-d-1)_s(\gamma n^{-1/d})^{ds+1} \label{eq:Expectation_XT}
\\
&= (1+o(1))\binom n{d+1}n^{-1/d}\gamma^{ds+1}\,,    \nonumber
\end{align}
where the transition to the second line follows from $(n-d-1)_sn^{-s}\to 1$.
 
We proceed to compute the variance of $X_T$. Let $T_1,T_2$ be proper $s$-pedigrees whose underlying tree is $T$, let $q$ denote the total number  of common labels (internal and root) in $T_1$ and $T_2$, and write $P:=\sL(T_1)\cap \sL(T_2)$ and $t=|P|$. 

Clearly, $q=0$ implies $t=0$. In addition, 
we claim that if $0<q\le s+d+1$ then $$t\le dq-1\,.$$ Indeed, we first observe that if $t=ds+1$ then $T_1$ and $T_2$ have the same label set, thereby $q=s+d+1$ and the inequality is satisfied. Otherwise, $P\subsetneq \sL(T_1).$ Let $R$ denote the internal labels of $T_1$ that are contained in one of the sets in $P$. Note that the labels in $R$ must also appear in $T_2$ --- either as internal labels or in the root. Furthermore, by Lemma~\ref{lem:boston}, $|P|\le d|R|$. If this inequality is strict, then the claim follows from $|R|\le q$. Otherwise, Lemma~\ref{lem:boston}  asserts that $P=\sL(T_1)\setminus\sL((T_1)_{\bv})$ for some $\bv\in T_1$. In such a case, besides the $t/d$ common labels of $R$, all the $d+1$ labels in the root of $T_1$ also appear in one of the sets of $P$ --- and therefore also in $T_2$. Consequently, an even stronger inequality of $q \ge t/d + d+1$ is obtained.

Therefore, by summing over the integers $0\le q\le d+s+1$, we find
\begin{align*}
\E[X_T^2] \le~ \E[X_T]^2+\sum_{q=1}^{s+d+1}Cn^{2s+2d+2-q}(\gamma n^{-1/d})^{2ds+2-(dq-1)}\,,
\end{align*}
where the constant $C=C_{s,d}$ accounts for assigning the selected $2s+2d+2-q$ labels to the nodes of $T$ that creates $T_1,T_2$. Therefore $\var(X_T)=O(n^{2d+2-3/d}),$ and the lemma is derived via Chebyshev's inequality.
\end{proof}

\begin{lemma}\label{lem:pairs}
Let $s_0\ge 0 $ be an integer, and let $Z_{s_0}$ denote the number of pairs of proper pedigrees with at most $s_0$ internal vertices that have the same root and whose leaves are contained in $Y$. Then,
\[
\mathbb E[Z_{s_0}] \le Cn^{d+1-2/d}\,,
\]
where $C$ is a constant depending on $d,s_0$.
\end{lemma}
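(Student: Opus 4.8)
The plan is a first-moment (union) bound. I read ``pair'' in the statement as a pair $(G_1,G_2)$ of proper pedigrees with a common root $\bo$, with at most $s_0$ internal vertices each, and with $\sL(G_1)\neq\sL(G_2)$ (this is forced: including pairs with equal leaf sets would already contribute $\Theta(n^{d+1-1/d})$ through $\bo$-terms). Writing $s_i$ for the number of internal vertices of $G_i$ and $t=|\sL(G_1)\cap\sL(G_2)|$, the probability that $\sL(G_1)\cup\sL(G_2)\subseteq Y$ equals $p^{\,d(s_1+s_2)+2-t}$ in $\cY_d(n,p)$. I would enumerate such a pair by: the root $\bo$ (at most $n^{d+1}$ choices); the underlying trees $T_1\in\mathfrak T_{s_1}$, $T_2\in\mathfrak T_{s_2}$ together with the partial matching between the internal nodes of $T_1$ and those of $T_2$ recording which nodes are to receive the same label (at most $C(d,s_0)$ choices in total, as $s_1,s_2\le s_0$); and, writing $q'$ for the size of that matching, the $s_1+s_2-q'$ distinct internal labels from $[n]\setminus\bo$ that are then assigned (at most $n^{\,s_1+s_2-q'}$ choices --- any choice of distinct labels off $\bo$ yields valid proper pedigrees). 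Each admissible pair arises exactly once this way.

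The crux --- and the only genuinely nontrivial step --- is the inequality $t\le dq'$, where $q'$ is the number of internal labels common to $G_1$ and $G_2$. Its proof goes through Lemma~\ref{lem:boston}: since $\sL(G_1)\neq\sL(G_2)$ we may assume $\sL(G_1)\not\subseteq\sL(G_2)$, so that $P:=\sL(G_1)\cap\sL(G_2)$ is a proper subset of $\sL(G_1)$. Setting $R=\bigcup_{\bv\in P}(\bv\setminus\bo)$ and $r=|R|$, item~\eqref{lem_item:bos_b} of Lemma~\ref{lem:boston} applied with $T=G_1$ gives $t=|P|\le dr$. It then remains to note $r\le q'$: every label in $R$ is an internal label of $G_1$ (the only labels appearing on faces of a proper pedigree are the $d+1$ root labels and the internal ones) that lies on some face of $P\subseteq\sL(G_2)$ and avoids $\bo$, hence is also an internal label of $G_2$; thus $R$ is contained in the set of internal labels common to $G_1$ and $G_2$.

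Putting this together, the contribution of a fixed quadruple $(\bo,T_1,T_2,\text{matching})$ is at most a constant times $n^{\,s_1+s_2-q'}\,p^{\,d(s_1+s_2)+2-t}$, and substituting $p=\gamma n^{-1/d}$ turns the exponent of $n$ into $(s_1+s_2-q')-\tfrac1d\bigl(d(s_1+s_2)+2-t\bigr)=-q'-\tfrac2d+\tfrac td\le-\tfrac2d$, where the last step is exactly $t\le dq'$. Summing over the $\le n^{d+1}$ choices of $\bo$ and the $O_{d,s_0}(1)$ remaining choices gives $\E[Z_{s_0}]\le C\,n^{d+1-2/d}$ with $C=C(d,s_0)$.

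Two points requiring care, but not real obstacles. In the degenerate case $s_i=0$ a pedigree is the single vertex $\bo$; then $\sL(G_1)\neq\sL(G_2)$ forces the other pedigree to have $s\ge1$, so $\bo$ is not one of its leaves, whence $P=\emptyset$ and $t=0=dq'$, and the argument above applies verbatim. Also, in contrast with the variance estimate of Lemma~\ref{lem:X_T}, here the $d+1$ root labels are shared a priori and are already paid for by the choice of $\bo$; they must not be counted again, which is precisely why one needs the sharper input $t\le dq'$ with $q'$ counting only the common \emph{internal} labels, rather than the weaker $t\le dq-1$ (with $q$ including the root) that suffices when the two roots are allowed to differ.
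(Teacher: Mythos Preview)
Your proposal is correct and follows essentially the same approach as the paper. Both arguments hinge on the inequality $|\sL(T_1)\cap\sL(T_2)|\le d\,|R_1\cap R_2|$ (your $t\le dq'$), derived from item~\eqref{lem_item:bos_b} of Lemma~\ref{lem:boston}, after which the first-moment bound is routine; the paper organizes the enumeration by $s=|R_1\cup R_2|$ rather than by a matching of internal nodes, but the two bookkeepings are equivalent, and your reading of ``pair'' as requiring $\sL(G_1)\neq\sL(G_2)$ matches the paper's ``distinct proper pedigrees'' (which is needed in both proofs to invoke the strict-subset hypothesis of Lemma~\ref{lem:boston}).
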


\begin{proof}
Let $T_1,T_2$ be distinct proper pedigrees sharing the same root $\bo$. Denote by $R_i$ the set of internal labels of $T_i$, $i=1,2,$ and let $s=|R_1\cup R_2|$. We claim that
\begin{equation}\label{eq:RL_claim}
|\sL(T_1)\cup\sL(T_2)| \ge ds+2\,.
\end{equation}
Indeed, since $|\sL(T_i)|=d|R_i|+1$, \eqref{eq:RL_claim} reformulates to
\[
|\sL(T_1)\cap\sL(T_2)| \le d|R_1\cap R_2|\,,
\]
which is derived by Lemma~\ref{lem:boston}.
We conclude the lemma by the following crude bound.
\begin{align*}
    \E[Z_{s_0}] &\le \binom{n}{d+1} |\mathfrak T_{\le s_0}|^2 \sum_{s=1}^{2s_0} \binom{n-d-1}{s} s^{2s_0} (\gamma n^{-1/d})^{ds+2}
    \\
    &\le C\, n^{d+1-2/d}\,.
\end{align*}
Here we chose a pair $T_1,T_2$ be selecting the common root, the two underlying $(d+1)$-ary trees and the label set $R_1\cup R_2$, and afterwards assigning the outgoing labels to the (at most $2s_0$) internal vertices. Finally, we use \eqref{eq:RL_claim} to get a lower bound for the number of leaves that are required to be in $Y$.
\end{proof}
\begin{proof}[Proof of Theorem~\ref{thm:density}]
Let $\epsilon>0$, and consider a sufficiently large integer $s_0$ satisfying
\[
\sum_{s=0}^{s_0}\sC_d(s)\gamma^{ds+1}>\hat\gamma-\epsilon\,.
\]

We start with the lower bound. Note that
\begin{equation}
\label{eq:XTZ}
\binom{n}{d+1}\hat X \ge \sum_{T\in\mathfrak T_{\le s_0}}X_T -  Z_{s_0}\,.    
\end{equation}
Indeed, the summation on the RHS counts the number of special pedigrees (i.e., proper pedigrees with at most $s_0$ internal vertices) whose leaves are all members of $Y$, from which we subtract the number of pairs of such pedigrees with the same root. Therefore, every root of such a pedigree is accounted for at most once.

Since $|\mathfrak T_{\le s_0}|$ does not depend on $n$, by Lemma~\ref{lem:X_T} we have that 
\begin{align}\label{eq:limXT}
n^{1/d}\frac{1}{\binom n{d+1}} \sum_{T\in\mathfrak T_{\le s_0}}X_T
= 
\sum_{s=0}^{s_0}\sum_{T\in\mathfrak T_{s}}n^{1/d}\frac{X_T}{\binom n{d+1}} 
\to& \sum_{s=0}^{s_0}\sum_{T\in\mathfrak T_{s}}\gamma^{ds+1}\\\nonumber =&
\sum_{s=0}^{s_0}\sC_d(s)\gamma^{ds+1} >\hat\gamma-\epsilon\,,
\end{align}
where the convergence is in probability as $n\to\infty.$
In addition, by Lemma~\ref{lem:pairs}, $\E[n^{1/d}Z_{s_0}/\binom{n}{d+1}] \to 0$ as $n\to \infty$, and we conclude the lower bound using \eqref{eq:XTZ}.

To establish a matching upper bound, we bound the number of sets $\bo\in\binom{[n]}{d+1}$ that have a stacked contraction in $Y$ by
\begin{equation}
    \label{eq:hatX_upper}
    {\binom n{d+1}}\hat X \le\sum_{T\in\mathfrak T_{
    n-d-1}}X_T +W\,,
\end{equation}
where $W$ is the number of {\em improper} $(m,l,s)$-pedigrees (i.e., $\aG:=l-ds-1\ge 1$) whose leaves are all members in $Y$. 

We split the first summation by considering big and small trees in separate. Namely, using \eqref{eq:Expectation_XT} we find that
\begin{align*}
\E\bigg[\sum_{s=s_0+1}^{n-d-1}\sum_{T\in\mathfrak T_s}X_T\bigg] &< \binom{n}{d+1}n^{-1/d} \sum_{s>s_0}\sC_d(s)\gamma^{ds+1}
\\
&< \binom{n}{d+1}n^{-1/d}\epsilon\,,
\end{align*}
whence
\begin{equation}
\mathbb P\bigg(n^{1/d}\frac{1}{\binom n{d+1}}\sum_{s=s_0+1}^{n-d-1}\sum_{T\in\mathfrak T_s}X_T > \sqrt{\epsilon}\bigg) \le \sqrt{\epsilon}\,.
\label{eq:W2}
\end{equation}

In addition, since $\sum_{s\le s_0}\sC_d(s)\gamma^{ds+1}<\hat\gamma,$ we find by \eqref{eq:limXT} that 
\begin{equation}
\mathbb P\bigg(n^{1/d}\frac{1}{\binom n{d+1}}\sum_{s\le s_0}\sum_{T\in\mathfrak T_s}X_T > \hat\gamma\bigg) \to 0\,,
    \label{eq:W3}
\end{equation} 
as $n\to\infty$.

To bound the contribution of $W$, we recall several facts from Section~\ref{sec:subcritical}. Let $\mathcal B$ denote the event that there is no $(m,l,s)$-pedigree with $s>(\log n)^{5/2}$ distinct internal labels whose leaves are all members of $Y$. By Observation~\ref{obs:reduce-labels} and \eqref{eq:cE_s^f}, $\mathcal B$ occurs with probability $1-o(1)$. 

In addition, recall that (as stated in the last paragraph of the proof of Theorem~\ref{thm:subcrit-main}) for every fixed $\bo\in\binom{[n]}{d+1}$, $s\le (\log n)^{5/2}$ and $\aG \ge 1$, the probability that there exists an $(l,m,s)$-pedigree, such that $l=ds+1+\aG$, whose root is $\bo$ and whose leaves are all members in $Y$ is at most $e^{-\delta s}n^{-\frac{\aG+1}d+o(1)}$, where $\delta = 1 - \gamma\alpha_d^{1/d}.$ Therefore, by summing over $\bo,s$ and $\aG\ge 1$ we find that
\[
\E[W \, \one_\mathcal B] \le \binom{n}{d+1}n^{-2/d+o(1)}\,,
\]
whence
\begin{equation}
\mathbb P\bigg(n^{1/d}\frac{1}{\binom{n}{d+1}} W > \sqrt{\epsilon}\bigg) \le n^{-1/d+o(1)}/\sqrt{\epsilon} + (1-\mathbb P(\mathcal B)) \to 0\,,
\label{eq:W1}
\end{equation} 
as $n\to\infty$.
In summary, and using \eqref{eq:hatX_upper},\eqref{eq:W2},\eqref{eq:W3} and \eqref{eq:W1}, we find that as $n\to\infty$,
\[
\mathbb P(n^{1/d}\hat X > \hat\gamma+2\sqrt\epsilon) \le \sqrt{\epsilon}+o(1)\,,
\]
as claimed.
\end{proof}

\appendix

\section{The leaf-to-label differential in the DAGs}

\begin{proposition}\label{prop:a-geq-0}
Let $d\geq 2$, and suppose $T$ is a stacked contraction with $l$ faces and a total of $s+d+1$ distinct labels. Then 
\begin{equation}\label{eq:ell-s-bound}
l \geq d s + 1\,.
\end{equation}
Moreover, there is an explicit $A \in \R^{d(s+d+1) \times l}$ such that $\dim(\ker A) = l-(ds+1)$.
\end{proposition}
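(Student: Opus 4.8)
The plan is to prove the following sharper statement, which immediately yields the proposition: if $q\colon[n]\to\R^{d}$ is a \emph{generic} point configuration (one may treat the $q_{v,j}$ as algebraically independent transcendentals, i.e.\ work over the rational function field), then the matrix $A=A(T,q)$ described next has $\rank A=ds+1$; since $\rank A\le l$ this gives $l\ge ds+1$, and $\dim\ker A=l-\rank A=l-(ds+1)$. Index the rows of $A$ by pairs $(v,j)$ with $v$ one of the $s+d+1$ labels of $T$ and $j\in[d]$, and the columns by the $l$ faces (leaves) $f$ of $T$. For $f=\{v_{0}<\cdots<v_{d}\}$ put $W_{f}:=\det\bigl[(1\mid q_{v_{i}})\bigr]_{i=0}^{d}$ ($d!$ times the signed volume of the simplex on $q_{v_0},\dots,q_{v_d}$), and set $A_{(v,j),f}:=\partial W_{f}/\partial q_{v,j}$ (which is $0$ unless $v\in f$). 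Thus $A^{\top}$ is the Jacobian at $q$ of the ``volume map'' $q\mapsto(W_{f})_{f}$: its cokernel consists of infinitesimal motions of the points preserving all leaf–volumes, and $\ker A$ is the space of volume self-stresses. This is exactly a generic rigidity–type matrix, which is precisely why algebraic shifting -- sensitive to this sort of generic linear-algebraic data -- is the natural tool behind the sharper bound of Theorem~\ref{thm:a-b}.

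\emph{Upper bound $\rank A\le ds+1$.} For any $M\in\R^{d\times d}$ with $\operatorname{tr}M=0$ and any $t\in\R^{d}$, the infinitesimal affine motion $\dot q_{v}=Mq_{v}+t$ satisfies $\dot W_{f}=(\operatorname{tr}M)\,W_{f}=0$ for every face $f$, hence lies in $\ker A^{\top}$; and $(M,t)\mapsto(\dot q_{v})_{v}$ is injective because a generic configuration contains $d+1$ affinely independent points, on which an affine map that vanishes is identically zero. Therefore $\dim\ker A^{\top}\ge d^{2}+d-1$, so $\rank A=\rank A^{\top}\le d(s+d+1)-(d^{2}+d-1)=ds+1$.

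\emph{Lower bound $\rank A\ge ds+1$.} I would induct on the number $m$ of internal vertices of the DAG underlying $T$ (Definition~\ref{def:mls}); for $m=0$, $T$ is a single generic face and $\rank A=1=d\cdot 0+1$. For $m\ge1$ choose an internal vertex $\bu=\{x_{1}<\cdots<x_{d+1}\}$ all of whose children $\bv_{i}=\bu\setminus\{x_{i}\}\cup\{z\}$ (with $z$ the outgoing label) are leaves, and let $T'$ be obtained by deleting $\bv_{1},\dots,\bv_{d+1}$ and demoting $\bu$ to a leaf. Expanding each $W_{\bv_{i}}$ in the barycentric coordinates of $q_{z}$ with respect to $q_{x_{1}},\dots,q_{x_{d+1}}$ (whose sum is $1$) gives the polynomial identity $W_{\bu}=\sum_{i}\sigma_{i}W_{\bv_{i}}$ for suitable signs $\sigma_{i}\in\{\pm1\}$, hence $\nabla W_{\bu}\in\operatorname{span}\{\nabla W_{\bv_{i}}\}$ after differentiating. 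If $z$ still occurs in $T'$, then $T'$ is an $(m-1,l-d,s)$-pedigree on the same $s+d+1$ labels, the column space of $A$ contains that of $A(T')$ (the only lost column $\bu$ lies in the span of the gained columns $\bv_{i}$), and induction gives $\rank A\ge\rank A(T')=ds+1$. If $z$ occurs only in the $\bv_{i}$, then $T'$ is an $(m-1,l-d,s-1)$-pedigree on $s+d$ labels; now $A$ has, in addition, the $d$ rows indexed by $(z,j)$, on which the $d+1$ vectors $\nabla_{q_{z}}W_{\bv_{i}}$ are generic and satisfy $\sum_i\sigma_i\nabla_{q_{z}}W_{\bv_{i}}=\nabla_{q_{z}}W_{\bu}=0$, hence already $d$ of them span $\R^{d}$; combining this with a basis of the columns coming from $A(T')$ (which vanish on the $z$-rows) yields $\rank A\ge\rank A(T')+d=d(s-1)+1+d=ds+1$. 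Together with the upper bound this gives $\rank A=ds+1$.

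\emph{Main obstacle.} The delicate point is the inductive bookkeeping for \emph{improper} pedigrees, where $m>s$ is possible: one must show that the rank increases by exactly $d$ precisely when a genuinely new outgoing label is peeled off, and by $0$ when an already-present label is reused -- which is why the induction must be carried out on $m$ while tracking whether $z$ survives in $T'$. One also needs to verify that $W_{\bu}=\sum_{i}\sigma_{i}W_{\bv_{i}}$ holds as a polynomial identity (not merely when $q_{z}$ lies inside the simplex $\bu$) with a consistent choice of signs $\sigma_{i}$, and to fix a workable notion of genericity, noting that a single generic $q$ serves simultaneously for the finitely many sub-DAGs arising in the induction.
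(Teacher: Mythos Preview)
Your approach is essentially the same as the paper's: the matrix $A$ is the volume-rigidity Jacobian, the upper bound $\rank A\le ds+1$ comes from trace-free infinitesimal affine motions, and the lower bound is by induction on $m$ via the identity $\nabla W_{\bu}\in\operatorname{span}\{\nabla W_{\bv_i}\}$ together with the fact that a genuinely new label $z$ contributes $d$ independent directions on the $z$-rows. The paper proves these two linear-algebra ingredients in full detail via explicit cofactor expansions (Claims~\ref{clm:triangulation-linear-comb} and~\ref{clm:new-z-indep}), whereas you obtain the first more slickly by differentiating the polynomial identity $W_{\bu}=\sum_i\sigma_iW_{\bv_i}$.
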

\begin{proof}
Suppose without loss of generality that the label set is $\{1,\ldots,s+d+1\}$, and let $v_1,\ldots,v_{s+d+1}\in \R^d $ be in general position. Let $x_1<x_2<\ldots<x_{d+1}$; we represent a face $f=
\{x_1,\ldots,x_{d+1}\}$ via a vector $w_f \in \R^{d(s+d+1)}$
as follows. Define the matrix 
\begin{equation}
    \label{eq:Af-def}
    A_f := \left(\begin{array}{c|c|c}
    v_{x_1} & 
    \vphantom{\bigg(}\ldots &
    v_{x_{d+1}} \\
    \mbox{\small$1$}  & & \mbox{\small$1$}
    \end{array}\right)\,,
\end{equation}
whose determinant is --- up to the sign --- the volume of the simplex whose vertices are $v_{x_1},\ldots,v_{x_{d+1}}$. The cofactor $C_{i,j}(A_f)$ of the matrix $A_f$, for every $1\le j\le d+1$ and $1\le i\le d$, is the partial derivative of $\det(A_f)$ with respect to the $i$-th coordinate of $v_{x_j}$.
Representing the vector $w_f\in \R^{d(s+d+1)} $ as an $(s+d+1)\times d$ matrix, we define $w_f$ to have
\begin{equation}\label{eq:w_f-def}
(w_f)_{x_j,i} := C_{i,j}(A_f) \qquad{1\leq j\leq d+1,1\leq i\le d}\,,
\end{equation}
and $0$ elsewhere. To a set $K\subset\binom{[s+d+1]}{d+1}$ we associate the matrix $A_K$ whose columns are $w_f,~f\in K$.

Let $z\in\R^{d(s+d+1)}$ be a vector that is also represented as an  $(s+d+1)\times d$ matrix whose columns are $z_1,\ldots,z_{s+d+1}$, and consider a continuous motion of the vertices
\[
v_j^{(t)}= v_j+t\cdot z_j,~j=1,\ldots,s+d+1.
\]
Let $A_f^{(t)}$ be defined as~\eqref{eq:Af-def} with respect to $v_{x_j}^{(t)},~j=1,\ldots,d+1$. Then, the derivative of the determinant of $A_f^{(t)}$ at time $t=0$ equals the inner product of $z$ and $w_f$. In words, $z$ is orthogonal to $w_f$ if and only if the above motion infinitesimally preserves the volume of the simplex whose vertices start at $v_{x_1},\ldots,v_{x_{d+1}}$. Clearly, any translation of $\R^d$, where $z_j=u$ for all $j=1,\ldots,s+d+1$, preserves the volume of every simplex. Additionally, we claim that for every matrix $M\in\R^{d\times d}$ having $\mathrm{tr}(M)=0$, setting $z_j=Mv_j,~j=1,\ldots,s+d+1$ infinitesimally preserves the volume of every simplex $f$. Indeed, In such a case $v_j^{(t)}=(I+tM)v_j$ whence the volume of the simplex $f$ at time $t$ satisfies 
\[
\det(A_f^{(t)})=\det(I+tM)\det(A_f).
\]
We conclude the claim by observing that the derivative of $\det(I+tM)$ equals $\mathrm{tr}(M)=0$. In summary, we showed that the left kernel of $A_K$ is of dimension at least $d+d^2-1$, so its rank is at most $d(s+d+1)-(d+d^2-1)=ds+1$. In the remainder of the appendix, we show that this bound is attained by the faces of every stacked contraction.

\begin{claim}\label{clm:triangulation-linear-comb}
Let $x_1<\ldots<x_{d+1}$ and $z$ be such that $ x_{k-1} < z < x_{k}$ for some $1\leq k \leq d+2$. If $f=(x_1,\ldots,x_{d+1})$ and $f_i$ $(i=1,\ldots,d+1$) is the face obtained from it by replacing $x_i$ with $z$, then for every $v_{x_1},\ldots,v_{x_{d+1}},v_z \in \R^d$,
\begin{equation}
    \label{eq:w_f-triangulation}
    w_{f} = \sum_{r=1}^{k-1} (-1)^{k+r+1} w_{f_r}
    + \sum_{r=k}^{d+1} (-1)^{k+r} w_{f_r}
\end{equation}
\end{claim}
\begin{proof}
Let $1\leq i,j\leq d+1$, and consider the matrix 
\[ E_i = 
\left(\begin{array}{c|c|c|c|c|c|c}
    \mbox{\small$1$} & \mbox{\small$1$} &  \mbox{\small$1$} &  \mbox{\small$1$}&  \mbox{\small$1$}& \mbox{\small$1$} &  \mbox{\small$1$}\\
    v_{z}^{(\neg j)} & 
    v_{x_1}^{(\neg j)} & 
    \vphantom{\Bigg(}\ldots &
    v_{x_{i-1}}^{(\neg j)}&
    v_{x_{i+1}}^{(\neg j)}& \ldots & 
    v_{x_{d+1}}^{(\neg j)} \\
    \mbox{\small$1$} & \mbox{\small$1$} &  \mbox{\small$1$} &  \mbox{\small$1$}&  \mbox{\small$1$}& \mbox{\small$1$} &  \mbox{\small$1$}
    \end{array}
    \right)\,,
\]
where $v^{(\neg j)}$ represents the vector obtained from $v$ by omitting its $j$-th coordinate.
Observe that the submatrix of $E_i$ obtained by omitting the first row and the column containing $v_z$ is precisely the submatrix obtained by omitting from $A_f$ the $j$-th row and the column containing $v_{x_i}$. Furthermore, omitting the first row and the column of $v_{x_r}$ from $E_i$ (for $1\leq r \leq d+1$, $r\neq i$) gives the submatrix of $A_{f_r}$ obtained by omitting its $j$-th row and the column of $v_{x_i}$, followed by moving the column of $v_z$ to be the first one (retaining the inner ordering between the remaining columns).
Observe that the order of the columns in $A_{f_r}$ is such that:
\begin{enumerate}[(a)]
    \item the vector $v_{x_i}$ appears in column $i-\one_{\{r < i\}} + \one_{\{k\leq i\}}$;
    \item if we omit the column of $v_{x_i}$, then $v_z$ appears in column $k-\one_{\{r < k\}} - \one_{\{i<k\}}$.
\end{enumerate}
It then follows from the observations above and~\eqref{eq:w_f-def} that, if $i\geq k$, then
\begin{align*}
\sum_{r=1}^{k-1} (-1)^{r} C_{1,r+1}(E_i) &= \sum_{r < k} (-1)^{r+i+j} (w_{f_r})_{x_i,j}
\end{align*}
(for each $r$ the corresponding cofactor of $A_{f_r}$ contributes $(-1)^{i+j}$), and
\begin{align*}
\sum_{r=k}^{d} (-1)^{r+1} C_{1,r+1}(E_i) &= \sum_{r=k}^{i-1}(-1)^{r+i+j} (w_{f_r})_{x_i,j}+ \sum_{r=i}^{d}(-1)^{r+(i+1)+j} (w_{f_{r+1}})_{x_i,j}\\
&= \sum_{\substack{r \geq k\\ r\neq i}} (-1)^{r+i+j}(w_{f_r})_{x_i,j}\,.
\end{align*}
Combining these, along with the fact that 
 $(w_{f_i})_{x_i,j}=0$ for all $j$ (since $f_i$ does not contain the label $x_i$, having replaced it by $z$), we find that for $i\geq k$,
 \begin{align}
     \label{eq:cofactor-i-geq-k}
     \sum_{r=1}^d (-1)^r C_{1,r+1} (E_i) = \sum_{r< k} (-1)^{r+i+j} (w_{f_r})_{x_i,j}
     - \sum_{r\geq k} (-1)^{r+i+j} (w_{f_r})_{x_i,j}\,.
 \end{align}
 
 On the other hand, if $i<k$ then
 \begin{align*}
\sum_{r=k-1}^{d} (-1)^{r} C_{1,r+1}(E_i) &= \sum_{r=k}^{d+1}(-1)^{(r+1)+i+j} (w_{f_r})_{x_i,j}\,,
 \end{align*}
whereas
\begin{align*}
\sum_{r=1}^{k-2} (-1)^{r+1} C_{1,r+1}(E_i) &= \sum_{r=1}^{i-1} (-1)^{r+(i-1)+j} (w_{f_r})_{x_i,j} + \sum_{r=i}^{k-2}(-1)^{r+i+j} (w_{f_{r+1}})_{x_i,j} \\
&= \sum_{r=1}^{k-1} (-1)^{r+i+j+1} (w_{f_r})_{x_i,j}
\,,
\end{align*}
and we see that~\eqref{eq:cofactor-i-geq-k} also holds for $i<k$ (again using that $(w_{f_i})_{x_i,j}=0$ for all $j$).

Recalling that $C_{1,1}(E_i) = (-1)^{i+j}(w_f)_{x_i,j}$, 
we may now, for every $i=1,\ldots,d+1$, substitute~\eqref{eq:cofactor-i-geq-k} in the expansion $\sum_r C_{1,r}(E_i) = \det(E_i)=0$, and obtain that
\[
    (w_f)_{x_i,j} = \sum_{r<k} (-1)^{k+r+1} (w_{f_r})_{x_i,j}
    + \sum_{r\geq k} (-1)^{k+r} (w_{f_r})_{x_i,j}\,.
\]

It remains to verify~\eqref{eq:w_f-triangulation} at the coordinate $z,j$. To this end, note first that $(w_f)_{z,j}=0$ for all $j$ (as the face $f$ does not contain the label $z$), and let
 \[ E = 
\left(\begin{array}{c|c|c}
    \mbox{\small$1$} & & \mbox{\small$1$}\\
    v_{x_1}^{(\neg j)} & 
    \vphantom{\Bigg(}\ldots &
    v_{x_{d+1}}^{(\neg j)} \\
    \mbox{\small$1$} & & \mbox{\small$1$}
    \end{array}
    \right)\,.
\]
For every $r=1,\ldots,d+1$, the $(1,r)$-minor of $E$ is exactly the minor corresponding to omitting the $j$-th row and the column containing $v_z$ from $A_{f_r}$. Therefore,
\begin{itemize}
    \item for $r=1,\ldots,k-1$ (the vector $v_z$ appears in column $k-1$ of $A_{f_r}$) we have
    \[ C_{1,r}(E) = (-1)^{r+k+j} (w_{f_r})_{z,j}\,;\]
    \item for $r=k,\ldots,d+1$ (the vector $v_z$ appears in column $k-1$ of $A_{f_r}$) we have
    \[ C_{1,r}(E) = (-1)^{r+1+k+j} (w_{f_r})_{z,j}\,.\]
\end{itemize}
Altogether, recalling definition~\eqref{eq:w_f-def} and that $\sum_r C_{1,r}(E) = \det(E)=0$ shows that 
\[ \sum_{r<k} (-1)^{k+r} (w_{f_r})_{z,j} +
\sum_{r\geq k} (-1)^{k+r+1} (w_{f_r})_{z,j} = 0\,,\] 
at which point the fact that $(w_f)_{z,j}=0$ for all $j$ establishes~\eqref{eq:w_f-triangulation} at coordinate $(z,j)$ and thereby concludes the proof.
\end{proof}

\begin{claim}\label{clm:new-z-indep}
Let $x_1<\ldots<x_{d+1}$ and $z$ be such that $z\neq x_i$ for all $i$. Let $f_i$ ($i=1,\ldots,d+1$) be the face with labels $\{x_1,\ldots,x_{d+1},z\}\setminus\{x_i\}$. If $\mathcal F$ is any collections of faces that do not contain the label $z$, and $\tilde w$ is any non-zero vector in the span of $\{w_{f}:  f\in \mathcal F\}$, then $  \tilde w ,w_{f_1},\ldots,w_{f_d}$ are linearly independent.
\end{claim}
\begin{proof}
Denote by $1\leq k \leq d+2$ the integer such that $x_{k-1}<z<x_s$, and suppose that for some $a_0,\ldots,a_d \in\R$ we have
\[ a_0 \tilde w + \sum_{i=1}^{d} a_i w_{f_i} = 0\,.\]
Since $\tilde w$ is a combination of $\{w_{f} : f\in\mathcal F\}$ where $\mathcal F$ does not feature the label $z$, every $f\in\mathcal F$ has $(w_{f})_{z,j}=0$ for all $j$, from which we infer that
\begin{align}
    \label{eq:linear-comb-at-z}
    \sum_{i=1}^d a_i (w_{f_i})_{z,j} = 0 \qquad\mbox{for all $j=1,\ldots,d$}\,.
\end{align} 
Define
\begin{align}
    \label{eq:E-v-def}
    E = \left(\begin{array}{cccc}
    & v_{x_1}^\textsc{t} && \mbox{\small$1$}\\
    \noalign{\smallskip}
    \hline
    & \vdots && \vdots \\
    \hline
    \noalign{\smallskip}
    & v_{x_{d+1}}^\textsc{t} && \mbox{\small$1$}
    \end{array}
    \right)
    \qquad\mbox{and}\qquad
    \hat v = \left(\begin{array}{c}\hat a_1\\ \vdots \\ \hat a_d \\ \mbox{\small$0$}\end{array}\right)\,,
\end{align}
where
\begin{align}
    \label{eq:a_i^*-def}
     \hat a_i := (-1)^{i+k+\one_{\{i<k\}}} a_i \quad(i=1,\ldots,d)\,, 
\end{align}
and let $E_j$ be the matrix obtained from $E$ by replacing its $j$-th column by $\hat v$.

With~\eqref{eq:Af-def} in mind, observe that for every $1\leq i,j \leq d$, omitting the $i$-th row and $j$-th column of $E_j$ gives the transpose of the submatrix obtained by omitting the column of $v_z$ and the $j$-th row in $A_{f_i}$. Thus, by~\eqref{eq:w_f-def} and the fact that $v_z$ appears in column $k-\one_{\{i<k\}}$ in $A_{f_i}$, the $(i,j)$-minor of $E_j$ is  $(-1)^{j+k-\one_{\{i<k\}}} (w_{f_i})_{z,j}$. We conclude that for every $j=1,\ldots,d$,
\[ \det(E_j) = \sum_{i=1}^d \hat a_i (-1)^{i+k-\one_{\{i<k\}}}
(w_{f_i})_{z,j} = \sum_{i=1}^d  a_i
(w_{f_i})_{z,j} = 0\,,
\]
with the last equality due to~\eqref{eq:linear-comb-at-z}. At the same time, we have that $u=(u_j)_{j=1}^{d+1}$ given by
\[ u_j := \frac{\det(E_j)}{\det(E)}\] is the unique solution of $E u = \hat v$ by Cramer's rule, relying here on the hypothesis that $v_1,\ldots,v_k$ are in general position in $\R^d$, whence $\det(E)\neq 0$. We have shown above that $u_j=0$ for all $1\leq j\leq d$, and so $u = u_{d+1} e_{d+1}$, where $e_{d+1}$ is the $(d+1)$-th standard basis vector in $\R^d$. But $E e_{d+1} = \underline \one$, the all-1 vector, whence 
\[ \hat v = E u = u_{d+1} \underline\one\,.\]
From this and the fact that $\hat v_{d+1}=0$ we can then deduce that $\hat v = 0$, and in turn, that $a_i=0$ for all $i=1,\ldots,d$. This reduces the original hypothesis to having $a_0 \tilde w = 0$, whence $a_0=0$ as well, and the proof is concluded.
\end{proof}

We claim that if the oriented faces $g_1,\ldots,g_l$ of the triangulation $T$ form a stacked triangulation of $f_0$ then the $d (s+d+1)\times l$ matrix $A$ whose columns are $\{ w_{g_i}\}_{i=1,\ldots,l}$ has rank at least $ds+1$, which will imply~\eqref{eq:ell-s-bound}.

 This will be proved by induction on the number of internal vertices $m$ in the stacked triangulation $T$. The base case of $m=0$ internal vertices (where $T$ is the single face $f_0$) corresponds to $s=0$ and $l=1=ds+1$.

Every stacked triangulation with $m\geq 1$ faces is obtained by subdividing some face $f$ of a stacked triangulation $\tilde T$ with $m-1$ internal vertices. If $f=(x_1,\ldots,x_{d+1})$, this corresponds to replacing the face $f$ by the $d+1$ faces $f_i$ ($i=1,\ldots,d+1$) as in where $f_i$ has $x_i$ swapped by $z$.

Claim~\ref{clm:triangulation-linear-comb} immediately implies that $\rank(A)\geq \rank(\tilde A)$, where $\tilde A$ is the matrix corresponding to $\tilde T$, the triangulation before subdividing $f$ via the label $z$. In particular, if the label $z$ has already appeared in $\tilde T$, then $\rank(A) \geq \rank(\tilde A) \geq ds+1$ holds true by our induction. 

On the other hand, if $z$ is a new label, then $\rank(A) \geq \rank(\tilde A) + d$ by virtue of Claim~\ref{clm:new-z-indep}.
\end{proof}
    
\subsection*{Acknowledgment} 
The authors thank Eran Nevo for useful discussions.
E.L.~was supported in part by NSF grants DMS-1812095 and DMS-2054833.

\bibliographystyle{abbrv}
\bibliography{triangs}

\end{document}